\numberwithin{equation}{section}
\newcommand*{\eeqref}[2][Equation~]{%
  \hyperref[{#2}]{#1(\ref*{#2})}%
}
\newtheoremstyle{note}
{\topsep}
{\topsep}
{}
{0pt}
{\bfseries}
{.}
{0.5em }
{}
\theoremstyle{plain}
\newtheorem{Thm}{Theorem}[section]
\newtheorem*{Thm*}{Theorem}
\newaliascnt{prop}{Thm}
\newtheorem{Prop}[prop]{Proposition}
\newaliascnt{lemma}{Thm}
\newtheorem{Lemma}[lemma]{Lemma}
\newaliascnt{coro}{Thm}
\newtheorem{Coro}[coro]{Corollary}
\newaliascnt{conjecture}{Thm}
\newtheorem{Lemma*}{Lemma}
\newtheorem*{Prop*}{Proposition}
\newtheorem*{Coro*}{Corollary}
\theoremstyle{definition}
\newaliascnt{def}{Thm}
\newtheorem{Def}[def]{Definition}
\newtheorem*{Def*}{Definition}
\newaliascnt{Eg}{Thm}
\newtheorem{eg}[Eg]{Example}
\newtheorem*{eg*}{Example}
\theoremstyle{remark}
\newaliascnt{rmk}{Thm}
\newtheorem{RMK}[rmk]{Remark}
\newtheorem*{RMK*}{Remark}
\theoremstyle{plain}
\newtheorem{iThm}{Theorem}[section]
\newtheorem*{iThm*}{Theorem}
\newaliascnt{iprop}{iThm}
\newaliascnt{iCoro}{iThm}
\newaliascnt{iconjecture}{iThm}
\theoremstyle{definition}
\newaliascnt{idef}{iThm}
\newtheorem*{iDef*}{Definition}
\newaliascnt{iEg}{iThm}
\newtheorem*{ieg*}{Example}
\theoremstyle{remark}
\newaliascnt{irmk}{iThm}
\newtheorem*{iRMK*}{Remark}
\newcommand{\bR} { {\mathbb{R}}}
\newcommand{\bN} { {\mathbb{N}}}
\newcommand{\bfk} { {\mathbf{k}}}
\newcommand{\cC} { {\mathcal{C}}}
\newcommand{\cD} { {\mathcal{D}}}
\newcommand{\sT}{ \mathscr{T} }
\newcommand{\supp}{{\textnormal{supp}}}
\newcommand{\HOM}{{ \textnormal{Hom}  }}
\newcommand{\HHOM}{{ \mathcal{H}om  }}
\newcommand{\pt}{{\textnormal{pt}}}
\newcommand{\id}{{\textnormal{id}}}
\newcommand{\Mod}{{\textnormal{Mod}}}
\newcommand{\Sh}{{\operatorname{Sh}}}
\newcommand{\Fun}{{\operatorname{Fun}}}
\newcommand{\PrSt}{{\operatorname{Pr_{st}^L}}}
\newcommand{\cofib}[1]{{\operatorname{cofib}\left( #1 \right)}}
\title{Non-linear microlocal cut-off functors}
\author{Bingyu Zhang}
\date{\today}
\subjclass[2020]{35A27 (Primary), 53D35 (Secondary)}
\begin{document}

\begin{abstract}
To any conic closed set of a cotangent bundle, one can associate four functors on the category of sheaves, which are called non-linear microlocal cut-off functors. Here we explain their relation
with the microlocal cut-off functor defined by Kashiwara and Schapira, and prove a microlocal cut-off lemma for non-linear microlocal cut-off functors, adapting inputs from symplectic geometry. We also prove two K\"unneth formulas and a functor classification result for categories of sheaves with microsupport conditions.
\end{abstract}

\maketitle

\section{Introduction}
In the study of microlocal theory of sheaves, a crucial tool is the microlocal cut-off lemma of Kashiwara and Schapira (see \cite[Proposition 5.2.3, Lemma 6.1.5]{KS90}, \cite{D_Agnolo_cutoff} and \cite[Chapter III]{guillermou2019sheaves}), which constructs certain functors that enable us to cut off the microsupport of sheaves functorially. On the other hand, in the study of symplectic topology, very similar functors were defined and studied for different purposes \cite{tamarkin2013,NS20Weinstein, kuo2021wrapped}.

Precisely, for a smooth manifold $M$, a conic closed set $Z \subset T^*M$, and $U = T^*M \setminus Z$, we will study the \textit{non-linear microlocal cut-off functors} $L_U, L_{Z}, R_{U}, R_{Z} : \Sh(M) \rightarrow \Sh(M)$ together with fiber sequences of functors $L_U \rightarrow \mathrm{id} \rightarrow L_Z, , R_Z \rightarrow \mathrm{id} \rightarrow R_U$ defined using adjoint data of the split Verdier sequence $\Sh_Z(M) \rightarrow \Sh(M) \rightarrow \Sh(M;U)$. See \autoref{section: verdier seq from microlocalization} for more details.

The main goal of this article is proving a non-linear microlocal cut-off lemma using the wrapping formula of $L_Z$ introduced in \cite{kuo2021wrapped}. 

\begin{iThm}[{Non-linear microlocal cut-off lemma, \autoref{prop: non-linear microlocal cut-off lemma} below.}]\label{intro thm: non-linear microlocal cut-off lemma}
For a conic closed set $Z \subset T^*M$, and $U = T^*M \setminus Z$, we have

\begin{enumerate}
\item 
\begin{enumerate}
\item The morphisms $F\rightarrow L_Z(F)$ and $R_Z(F)\rightarrow F$ are isomorphisms if and only if $SS(F)\subset Z$.
\item The morphism $L_U(F) \rightarrow F$ is an isomorphism if and only if $F \in {}^{\perp}\Sh_Z(M)$, and the morphism $F \rightarrow R_U(F)$ is an isomorphism if and only if $F \in \Sh_Z(M)^{\perp}$.
\end{enumerate}
\item 
\begin{enumerate}

\item The morphisms $L_U(F)\rightarrow F$ and $F\rightarrow R_U(F)$ are isomorphisms on $U$.
\item If $0_M\subset Z$, the morphism $F\rightarrow L_Z(F)$ and $R_Z(F)\rightarrow F$ are isomorphisms on $\operatorname{Int}(Z)\setminus 0_M$. Equivalently, we have $SS(L_U(F))\cup SS(R_U(F)) \subset \overline{U}\cup 0_M$.

\end{enumerate}
\end{enumerate}
\end{iThm}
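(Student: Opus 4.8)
Parts (1) and (2)(a) will be formal consequences of the split Verdier sequence. Recall from \autoref{section: verdier seq from microlocalization} that, writing $i\colon\Sh_Z(M)\hookrightarrow\Sh(M)$ for the inclusion, $q\colon\Sh(M)\to\Sh(M;U)$ for the quotient, and $i^L\dashv i\dashv i^R$, $q^L\dashv q\dashv q^R$ for their adjoints (the outer functors being fully faithful, as the sequence splits), one has $L_Z=ii^L$, $R_Z=ii^R$, $L_U=q^Lq$, $R_U=q^Rq$, and that $L_U\to\id\to L_Z$ and $R_Z\to\id\to R_U$ are the two gluing triangles. Then (1)(a): the unit $F\to L_Z(F)$, resp.\ the counit $R_Z(F)\to F$, is an equivalence precisely when $F$ lies in the essential image of $i$, i.e.\ when $SS(F)\subset Z$. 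And (1)(b): the counit $L_U(F)\to F$, resp.\ the unit $F\to R_U(F)$, is an equivalence precisely when $F$ lies in the essential image of $q^L$, resp.\ of $q^R$, and these images are ${}^{\perp}\Sh_Z(M)$ and $\Sh_Z(M)^{\perp}$ by general recollement. Finally (2)(a): from the first gluing triangle, $\cofib{L_U(F)\to F}\simeq L_Z(F)$, whose microsupport lies in $Z=T^*M\setminus U$, so $q$ carries $L_U(F)\to F$ to an equivalence, which is the meaning of ``isomorphism on $U$''; the morphism $F\to R_U(F)$, with cofiber $R_Z(F)[1]$, is handled the same way.

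For (2)(b), first note $\cofib{F\to L_Z(F)}\simeq L_U(F)[1]$ and $\cofib{R_Z(F)\to F}\simeq R_U(F)$, so the claim that $F\to L_Z(F)$ and $R_Z(F)\to F$ are isomorphisms on $\operatorname{Int}(Z)\setminus 0_M$ is literally equivalent to $SS(L_U(F))\cup SS(R_U(F))\subset(T^*M\setminus\operatorname{Int}(Z))\cup 0_M=\overline U\cup 0_M$; this is what must be proved. Fix $p\in\operatorname{Int}(Z)\setminus 0_M$ and a conic open $V$ with $p\in V\subset\operatorname{Int}(Z)\cap\dot{T}^*M$; it suffices to show $q'(L_U(F))=0=q'(R_U(F))$ for the localization $q'\colon\Sh(M)\to\Sh(M;V):=\Sh(M)/\Sh_{T^*M\setminus V}(M)$, since letting $p$ vary then forces $SS(L_U(F))$ and $SS(R_U(F))$ to avoid $\operatorname{Int}(Z)\setminus 0_M$. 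As $\Sh_{T^*M\setminus V}(M)$ is both reflective and coreflective in $\Sh(M)$ (it is the left term of a split Verdier sequence), $q'$ preserves all limits and all colimits.

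Now I invoke the wrapping formula of \cite{kuo2021wrapped}. Under the hypothesis $0_M\subset Z$, the subcategory $\Sh_Z(M)$ is exactly the category of sheaves whose microsupport meets $\dot{T}^*M$ only inside the conic closed stop $\Lambda:=Z\cap\dot{T}^*M$, and the left adjoint $i^L$, hence $L_Z$, is presented by positive wrapping:
\[
L_Z(F)\;\simeq\;\operatorname{colim}_{w}\;K_w\circ F,
\]
where $w$ ranges over the filtered wrapping category of $\Lambda$, each $K_w$ is the Guillermou--Kashiwara--Schapira sheaf quantization of a positive homogeneous Hamiltonian isotopy whose generating Hamiltonian vanishes on $\Lambda$, $\circ$ is the convolution of kernels, and the transition maps of the diagram together with the canonical map $F\to L_Z(F)$ are the continuation maps $F\to K_w\circ F$. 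Dually (by negative wrapping, or by Verdier duality applied to the $L_Z$ presentation for the antipode of $Z$), $R_Z(F)\simeq\lim_{w}K'_w\circ F$ with continuation maps $K'_w\circ F\to F$, the isotopies again generated by Hamiltonians vanishing on $\Lambda$. Since $V\subset\operatorname{Int}(\Lambda)$ and a Hamiltonian vanishing on $\Lambda$ vanishes, with all its derivatives, on the open set $\operatorname{Int}(\Lambda)$, each wrapping isotopy restricts to the identity on $V$; consequently — the cofiber of a continuation map being microsupported in the portion of the swept region on which the generating Hamiltonian is strictly positive, which is disjoint from $V$ — the maps $q'(F\to K_w\circ F)$ and $q'(K'_w\circ F\to F)$ are equivalences for every $w$. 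Because $q'$ preserves the displayed colimit and limit, $q'(F\to L_Z(F))$ and $q'(R_Z(F)\to F)$ are equivalences, and passing to cofibers gives $q'(L_U(F))=0=q'(R_U(F))$, which completes (2)(b).

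The essential content is in (2)(b), in two places: identifying $\Sh_Z(M)$ with the $\Lambda$-stopped framework of \cite{kuo2021wrapped} under the hypothesis $0_M\subset Z$ so that the wrapping presentations apply, and the uniform microlocal fact that a continuation map for an isotopy stationary on an open conic set $V$ is an isomorphism over $V$; once these are available, survival under the colimit (for $L$) and the limit (for $R$) is automatic since localization to $V$ preserves both. I expect the main obstacle to be exactly this microsupport estimate for continuation maps and its interaction with the (co)limits indexed by the wrapping category. The zero section is genuinely outside the reach of the method, since the homogeneous Hamiltonian flows and their quantizations live only on $\dot{T}^*M$, which is why the statement only controls $SS(L_U(F))$ and $SS(R_U(F))$ away from $0_M$.
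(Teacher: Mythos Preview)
Your proposal is correct; parts (1) and (2)(a) match the paper exactly (formal from the split Verdier sequence), and for (2)(b) both you and the paper rest on the wrapping formula together with the fact that a continuation map for a Hamiltonian supported in $U^\infty$ is a microlocal isomorphism over $\operatorname{Int}(Z)\setminus 0_M$. The difference is organizational. The paper proves this fact once at the kernel level: it shows $\dot{SS}(K^\circ(\phi_H)|_1)\subset(-U)\times U$ by a microstalk computation (\autoref{prop: microsupport estimation of kernel}), passes to the colimit via \autoref{coro: colimit estimation} to get $\dot{SS}(K_U)\subset(-\overline U)\times\overline U$, and then invokes the convolution/nvolution estimate \autoref{prop: convolution estimation in general} to handle $L_U(F)=\Phi_{K_U}(F)$ and $R_U(F)=\Psi_{K_U}(F)$ in one stroke. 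You instead work object-by-object: localize at a conic open $V\subset\operatorname{Int}(Z)\setminus 0_M$, show each continuation map becomes an equivalence after $q'$, and use that $q'$ preserves both colimits and limits to conclude for $L_Z$ and $R_Z$ separately. The paper's route is more economical---one kernel estimate covers $L_U$ and $R_U$ simultaneously---whereas yours avoids the general convolution estimate but must invoke the negative-wrapping presentation of $R_Z$ and the limit-preservation of $q'$ as a separate step. Your heuristic ``cofiber microsupported where $H>0$'' is exactly the content of \autoref{prop: microsupport estimation of kernel}, made rigorous there by checking that the continuation map $1_{\Delta_M}\to K(\phi_H)|_1$ is the identity on microstalks over the region where $H$ vanishes; you rightly flag this as the crux.
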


In fact, all of them except (2)-(b) follow directly from the definition. Only (2)-(b) of the theorem is a non-trivial fact.

Here, we shall explain the relation between non-linear microlocal cut-off functors and microlocal cut-off functors in \cite{KS90}. Here, we follow the notation of \cite[Section III.1]{guillermou2019sheaves} for microlocal cut-off functors.

We take $M=V$ to be a real vector space, $\lambda ,\gamma \subset V$ as pointed closed convex cones (we assume further that $\lambda$ is proper). The comparison is summarized in \autoref{table: functor comparision table}, where the first column marks corresponding references, and the last column marks their position in this article. Functors in the same columns are isomorphic with corresponding $U$ and $Z$.

\begin{table}[htbp]
\begin{tabular}{c|c|c|c|c|c|c|c}
\hline
This article &$Z$ & $U$ &$L_U$ &$L_Z$ & $R_U$ & $R_Z$ &\autoref{Definition: Non-linaer microlocal cut-off functor} \\ \hline
\cite[Section III.1]{guillermou2019sheaves}& $V\times \lambda^{\circ a}$ & &  $P_\lambda'$   &  $Q_\lambda$    &  $Q'_\lambda $   &  $P_\lambda    $ & \autoref{prop: functor comparision}\\ \hline
\cite[Section 3]{GS2014} &    &  $V\times \operatorname{Int}(\gamma^\circ)$ & $ L_\gamma$   &   &  $ R_\gamma$   & &\autoref{example: Tamarkin projectors} \\ \hline 
\end{tabular}
\caption{Known microlocal cut-off functors.}
\label{table: functor comparision table}
\end{table}

Based on this comparison, we see the relation between linear microlocal cut-off lemmas \cite[Proposition 5.2.3, Lemma 6.1.5]{KS90}, \cite[Proposition 3.17, 3.19, 3.20, 3.21]{GS2014} and our result. However, let me emphasize that our result is not completely irrelevant with those linear cut-off lemmas: In fact, to derive the wrapping formula of $L_Z$, we somehow use the 1-dimensional linear cut-off lemma for $\lambda=(-\infty,0]$. Therefore, one could say that \autoref{intro thm: non-linear microlocal cut-off lemma} constructs non-linear microlocal cut-off
functors, generalizing the linear microlocal cut-off functor which used the vector space structures, but it also provides a unified statement
for all microlocal cut-off functors. For example, we will explain how to prove a cut-off lemmas for the Tamarkin categories and its equivariant version in \autoref{example: Tamarkin category}.

We remark that our approach aims to explain the microlocal cut-off lemmas as globally as possible on $T^*M$ using input from symplectic topology. Therefore, we did not consider the refined microlocal cut-off lemmas (see \cite[Proposition 6.1.4 \& 6.1.6]{KS90}, \cite{D_Agnolo_cutoff}, and \cite[Section III.2, III.3]{guillermou2019sheaves}), which concern more on the local behavior of cut-off functors.

We also prove two K\"unneth formulas in \autoref{section: functor classification} (with more precise statements), which remove the isotropic condition of \cite[Theorem 1.1, 1.2]{Kuo-Li-Duality2024} via a different approach.
\begin{iThm}For two manifolds $M, N$, conic closed sets $Z\subset T^*M$ and $X\subset T^*N$, and we set $U=T^*M\setminus Z$ and $V=T^*N\setminus X$. We have
\[\Sh_{X\times Z}(N\times M) \simeq \Sh_{X  }(N )\otimes \Sh_{  Z}(  M),\quad \Sh(N;V)\otimes\Sh(M;U)  \simeq \Sh(N\times M; V\times U).\]
\end{iThm}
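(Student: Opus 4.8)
The plan is to deduce both equivalences from the universal property of stable subcategories and quotients (split Verdier sequences) together with the fact that $\otimes$ of presentable stable categories preserves these sequences. The key structural input is the split Verdier sequence $\Sh_Z(M) \to \Sh(M) \to \Sh(M;U)$ from \autoref{section: verdier seq from microlocalization}, and the analogue $\Sh_{X\times Z}(N \times M) \to \Sh(N\times M) \to \Sh(N\times M; V\times U)$; the Künneth formula for sheaves without microsupport conditions, $\Sh(N)\otimes\Sh(M)\simeq \Sh(N\times M)$, is classical (it is $\operatorname{Sh}$ applied to a product of spaces, or rather follows from $\Sh(M)\simeq \Fun(\mathrm{Open}(M)^{op},\mathcal{S})$-type descriptions and the fact that $\Pr^L$ is symmetric monoidal with $\otimes$ computing such colimit-preserving bifunctors). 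I would first record this, and then observe that $(-)\otimes(-)$ on $\PrSt$ is exact in each variable and hence sends a (split) Verdier sequence in one variable, tensored with a fixed category, to a (split) Verdier sequence.

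First I would prove the microsupport-condition statement $\Sh_{X\times Z}(N\times M)\simeq \Sh_X(N)\otimes\Sh_Z(M)$. The subcategory $\Sh_{X\times Z}(N\times M)\subset \Sh(N\times M)$ is, by the characterization of microsupport of an external product (if $F\boxtimes G$ has $SS(F\boxtimes G)\subset SS(F)\times SS(G)$, with equality away from the zero section in each factor) the full subcategory generated under colimits by objects $F\boxtimes G$ with $SS(F)\subset X$, $SS(G)\subset Z$ — equivalently it is the essential image of $\Sh_X(N)\otimes\Sh_Z(M)$ under the colimit-preserving external-product functor $\Sh(N)\otimes\Sh(M)\to\Sh(N\times M)$. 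So it suffices to show that functor, restricted to $\Sh_X(N)\otimes\Sh_Z(M)$, is fully faithful; equivalently that $\Sh_X(N)\otimes\Sh_Z(M)\to\Sh(N)\otimes\Sh(M)$ is fully faithful, which follows because each factor inclusion $\Sh_X(N)\to\Sh(N)$ is the inclusion of a Verdier subcategory (admits both adjoints, being part of a split Verdier sequence), these are stable under $\otimes$ in $\PrSt$, and a fully faithful left adjoint tensored with a fully faithful left adjoint is fully faithful. Then one identifies the essential image with $\Sh_{X\times Z}(N\times M)$ by a microsupport computation: the image is the localizing subcategory generated by the $F\boxtimes G$ above, and one checks this equals $\Sh_{X\times Z}(N\times M)$ using that $\Sh_{X\times Z}$ is generated by such boxes — this is where one uses that $\Sh_Z(M)$ is compactly generated / generated by corepresentable-type objects after microlocalization, or more elementarily the estimate $SS(F\boxtimes G)\subset SS(F)\times SS(G)$ in one direction and a local generation argument in the other.

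For the second equivalence, I would apply $(-)\otimes\Sh(M;U)$ to the split Verdier sequence $\Sh_X(N)\to\Sh(N)\to\Sh(N;V)$ and $\Sh(N;V)\otimes(-)$ to $\Sh_Z(M)\to\Sh(M)\to\Sh(M;U)$, obtaining that $\Sh(N;V)\otimes\Sh(M;U)$ is the Verdier quotient of $\Sh(N)\otimes\Sh(M)\simeq\Sh(N\times M)$ by the subcategory generated by $\Sh_X(N)\otimes\Sh(M)$ and $\Sh(N)\otimes\Sh_Z(M)$. That subcategory is the localizing subcategory of $\Sh(N\times M)$ generated by sheaves with microsupport in $X\times T^*N$ or $T^*M\times Z$ — which is exactly $\Sh_{(X\times T^*N)\cup(T^*M\times Z)}(N\times M) = \Sh_{T^*(N\times M)\setminus(V\times U)}(N\times M)$ — so the quotient is $\Sh(N\times M; V\times U)$ by definition of the microsupport-localized category (this last identification again uses the first Künneth-type generation statement, for the two "partial" conditions, which is the degenerate case $X\subset T^*N$ arbitrary, $Z=0_M$ and vice versa, already handled above).

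The main obstacle I expect is the generation step: showing that the localizing subcategory of $\Sh(N\times M)$ generated by external products $F\boxtimes G$ with $SS F\subset X$, $SS G\subset Z$ is \emph{all} of $\Sh_{X\times Z}(N\times M)$, and dually that the one generated by $\Sh_X(N)\otimes\Sh(M)$ and $\Sh(N)\otimes\Sh_Z(M)$ is exactly $\Sh_{(X\times T^*N)\cup(T^*M\times Z)}(N\times M)$. One inclusion is the microsupport product estimate; the reverse requires knowing $\Sh(N\times M)$ is generated by external products (true, since open products generate the topology) together with a microlocal cut-off argument — here I would invoke the non-linear microlocal cut-off functors $L_U,L_Z$ and their compatibility with $\boxtimes$: applying $L_{V\times U}$ or the relevant projector to a generating family of $\boxtimes$'s of opens and using that $L$ commutes with colimits lets one replace an arbitrary object of $\Sh_{X\times Z}(N\times M)$ by a colimit of microlocalized boxes. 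Making this cut-off argument work "externally" — i.e. checking $L_Z^M\boxtimes L_X^N$ or the appropriate composite realizes the projector onto $\Sh_{X\times Z}$ — is the technical heart, and is presumably where the paper's wrapping-formula description of $L_Z$ (which is manifestly colimit-preserving and local) does the work that the isotropy hypothesis did in \cite{Kuo-Li-Duality2024}.
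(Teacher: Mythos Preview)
Your proposal is correct and follows essentially the same strategy as the paper: the heart of the argument is exactly what you anticipate---showing that $K_X\boxtimes K_Z$ realizes the projector $L_{X\times Z}$ onto $\Sh_{X\times Z}(N\times M)$, which the paper carries out via the wrapping formula and a GKS microsupport computation on the two-parameter family $K(\phi_{H_{\lambda_1(n_1)}})\boxtimes K(\phi_{H_{\lambda_2(n_2)}})$---and the second equivalence is then deduced by iterated split Verdier quotients, which the paper organizes as a 9-diagram. The only minor differences are that the paper invokes Efimov's result (tensoring with a dualizable category preserves full faithfulness) where you use the split Verdier structure directly, and that the case where $X,Z$ do not contain the zero section requires a short separate reduction which you omit.
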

Combine the dualizability of $\Sh_Z(M)$ and $\Sh(M;U)$ via \cite[Remark 3.7]{Hochschild-Kuo-Shende-Zhang}, we have the following functor classification result:
\begin{align}\label{equation: kernel-functor-microlocalization}
\begin{aligned}
    &\Sh_{-X\times Z}(N\times M)\simeq \Fun^L(\Sh_X(N), \Sh_Z(M)) \simeq \Fun^R(\Sh_Z(M),\Sh_X(N) ) ^{op},\\
    &\Sh(N\times M; -V\times U)\simeq \Fun^L(\Sh(N;V), \Sh(M;U)) \simeq \Fun^R(\Sh(M;U), \Sh(N;V)) ^{op}.
\end{aligned}
\end{align}

\subsubsection*{Category convention}
In this article, a category means an $\infty$-category. We refer to \cite[Chapter 1]{Gaitsgory-Rozenblyum} for basics about higher algebra, and more details could be found in \cite{HTT,HA,SAG}.

In this article, we shall consider a fixed presentable stable symmetric monoidal category $(\bfk,\otimes,1)$, and we denote $\PrSt$ the category of $\bfk$-linear presentable stable categories and left adjoint $\bfk$-linear functors. There exists a $\bfk$-relative Lurie tensor product that form a closed symmetric monoidal structure on $\PrSt$ such that the category of left adjoint functors $\Fun^L$ serves as the internal hom, where we write $\otimes$/$\Fun^L$ directly instead of $\otimes_{\bfk}$/$\Fun^L_{\bfk}$. We say a $\bfk$-linear presentable stable categories is dualizable if it is a dualizable object in $\PrSt$.

For the purpose of this paper, we will restrict $(\bfk,\otimes,1)$ to be compactly generated, and be locally rigid in the sense of \cite[Definition 4.2.1, Corollary 4.3.5]{Fake_sheaves_on_mfd}. We refer to \cite[Section 4.2, 4.3, 4.4]{Fake_sheaves_on_mfd} and \cite{Ramzi_locally_rigid_cat} for more discussion on local rigidity. The standard example for $(\bfk,\otimes,1)$ would be the category of modules $\Mod_E$ over an $\mathbb E_\infty$ ring spectrum $E$. The compactly generation condition guarantees the microlocal sheaf theory with $\bfk$-coefficient to run without any modification as \cite{KS90}. The local rigidity condition ensures that plain adjoints of $\bfk$-linear functors are also $\bfk$-linear (\cite[Lemma 4.3.7]{Fake_sheaves_on_mfd}). We will discuss later in more detail the dependence of our results on the requirement. But let me remark here that: {\fontencoding{U}\fontfamily{futs}\selectfont\char 49\relax} Upon we can prove \cite[5.4.13, Theorem 7.2.1]{KS90} using the so-called $\Omega$-lens definition (i.e. the equivalent term (2), (3) of \autoref{lemma: Omega-lens}), then all the results of this paper are true if $\bfk$ is locally rigid but not necessarily compactly generated.

 \subsection*{Acknowledgements}This work is inspired by a discussion with St\'ephane Guillermou and Vivek Shende's interests on microlocal cut-off functors. The author thanks them for their helpful discussion and encouragement. We also thank for Alexander I. Efimov, Xin Jin, Christopher Kuo, Wenyuan Li, Pierre Schapira and Matthias Scharitzer for helpful discussion. Thanks for the anonymous referees and their helpful comments. This work was supported by the Novo Nordisk Foundation grant NNF20OC0066298 and VILLUM FONDEN, VILLUM Investigator grant 37814.

\section{Sheaves and integral kernel}\label{section: sheaf theory review}
For a locally compact Hausdorff topological space $X$, we set $\Sh(X;\cC)$ to be the category of $\cC$-valued sheaves \cite[7.3.3.1]{HTT} for a presentable symmetric monoidal category, which is also $\cC$-linear. It is explained in \cite{6functor-infinity} that since $\cC$ is a symmetric monoidal category, we can define the $\cC$-linear $6$-functor formalism. We also refer to \cite{Six-FunctorScholze} for the $6$-functor formalism. In this paper, we mostly interested in the case that $\cC=\bfk$, and in this case, we simply denote $\Sh(X)=\Sh(X;\bfk)$. In this case, it is explained in \cite[Lemma 4.6.17.]{Fake_sheaves_on_mfd} that local rigidity can guarantee the Verdier dual functor behave as it is expected.

For a locally closed inclusion $i:Z\subset X$ and $F\in \Sh(X)$, we set $F_Z = i_!i^{-1}F$ and $\Gamma_Z F = i_*i^!F$. We denote the constant sheaf $1_X = a^*1$, where $a$ is the constant map $X\rightarrow \pt$, and denote $1_Z = (1_X)_Z\in \Sh(X)$ if there is no confusion.

The main reason for using $\infty$-category is the following classification of left adjoint functors, which is a combination of a sequence of Lurie's results. We refer to {\cite[Proposition 3.2]{Hochschild-Kuo-Shende-Zhang}} for a proof.
\begin{Prop}\label{Kernel-functor corresponding}If $H_1$ is a locally compact Hausdorff space, then for all topological spaces $H_2$, we have the equivalence of categories
\[\Sh(H_1 \times H_2) \simeq \Fun^L(\Sh(H_1), \Sh(H_2)) \]
that sends $K \in \Sh(H_1 \times H_2)$ to the convolution functor $\Phi_K=[F\mapsto p_{2!}(K\otimes p_1^*F)]$, where $p_i:H_1\times H_2 \rightarrow H_i$ are projections. We call $K$ the kernel of $\Phi_K$.
\end{Prop}

It is known that taking the right adjoint induces an equivalence of categories
\[\Fun^L(\Sh(H_1), \Sh(H_2)) \simeq \Fun^R(\Sh(H_2), \Sh(H_1))^{op},\]
where $\Fun^R$ stands for right adjoint functors.

However, for any convolution functor $\Phi_K: \Sh(H_1)\rightarrow \Sh(H_2)$ with $K\in \Sh(H_1 \times H_2)$, there exists an obvious right adjoint functor\footnote{We call it an nvolution functor since it is a dual of a convolution functor, while coconvolution should be the same with nvolution.}
\[\Psi_K:\Sh(H_2) \rightarrow \Sh(H_1),\, F\mapsto p_{1*}\HHOM(K,p_2^!F).\]

Then we have the following equivalence of categories
\begin{Coro}\label{Kernel-functor corresponding right}Under the same condition of \autoref{Kernel-functor corresponding}, we have
  \[\Sh(H_1 \times H_2) \simeq \Fun^R(\Sh(H_2), \Sh(H_1))^{op},\, K\mapsto \Psi_K.\]  
\end{Coro}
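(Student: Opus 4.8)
The plan is to realize the asserted equivalence as the composite of two equivalences already available. First, \autoref{Kernel-functor corresponding} realizes $\Sh(H_1\times H_2)\simeq \Fun^L(\Sh(H_1),\Sh(H_2))$ via the convolution functor $K\mapsto\Phi_K$, where $\Phi_K(F)=p_{2!}(p_1^{-1}F\otimes K)$ for the two projections $p_i\colon H_1\times H_2\to H_i$; second, passing to right adjoints gives the equivalence $\Fun^L(\Sh(H_1),\Sh(H_2))\simeq \Fun^R(\Sh(H_2),\Sh(H_1))^{op}$ recalled just above. Composing the two yields an equivalence $\Sh(H_1\times H_2)\simeq \Fun^R(\Sh(H_2),\Sh(H_1))^{op}$ whose value on a kernel $K$ is a right adjoint of $\Phi_K$. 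Thus the entire content of the corollary reduces to the identification of that right adjoint with the explicit functor $\Psi_K$.

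To carry out this identification I would simply chain the structural adjunctions of the $\bfk$-linear six-functor formalism, namely $(p_{2!},p_2^!)$, the tensor--hom adjunction $\bigl(-\otimes K,\ \HHOM(K,-)\bigr)$, and $(p_1^{-1},p_{1*})$. Explicitly, for $F\in\Sh(H_1)$ and $G\in\Sh(H_2)$ one has natural equivalences of mapping spaces
\begin{align*}
\HOM\bigl(\Phi_K(F),\,G\bigr)
&=\HOM\bigl(p_{2!}(p_1^{-1}F\otimes K),\,G\bigr)\\
&\simeq\HOM\bigl(p_1^{-1}F\otimes K,\,p_2^!G\bigr)\\
&\simeq\HOM\bigl(p_1^{-1}F,\,\HHOM(K,p_2^!G)\bigr)\\
&\simeq\HOM\bigl(F,\,p_{1*}\HHOM(K,p_2^!G)\bigr)\\
&=\HOM\bigl(F,\,\Psi_K(G)\bigr),
\end{align*}
all natural in $F$ and in $G$. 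Since $\Phi_K$ is a left adjoint (this being part of \autoref{Kernel-functor corresponding}), its right adjoint is unique up to a contractible space of choices, so the display exhibits $\Psi_K$ as that right adjoint; in particular $\Psi_K$ genuinely lies in $\Fun^R(\Sh(H_2),\Sh(H_1))$. Plugging this back into the composite of the previous paragraph gives exactly $K\mapsto\Psi_K$, as claimed. Note that we never need to check directly that $K\mapsto\Psi_K$ is functorial: functoriality, together with the $(-)^{op}$, is inherited from the composite, the opposite arising from the usual reversal of direction when a natural transformation of left adjoints is turned into the induced one of right adjoints.

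The argument is essentially formal, so there is no serious obstacle; the only nontrivial step is the adjunction verification above, and even that is a routine diagram chase through the six-functor adjunctions. Two points deserve a word of care. First, the adjunctions used must be available at this level of generality: the hypothesis of \autoref{Kernel-functor corresponding} that $H_1$ be locally compact Hausdorff is precisely what makes $p_{2!}$ left adjoint to $p_2^!$, while $(p_1^{-1},p_{1*})$ and tensor--hom hold for any continuous map and any sheaf. Second, the chain of equivalences of mapping spaces, being a composite of the structural adjunction equivalences of the formalism, assembles into a genuine $\infty$-categorical adjunction $\Phi_K\dashv\Psi_K$ rather than a mere objectwise statement, and in any case uniqueness of adjoints upgrades the pointwise identification to a canonical equivalence of functors. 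With these remarks in place, the corollary follows.
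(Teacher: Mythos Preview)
Your proof is correct and follows exactly the approach the paper takes: the corollary is stated immediately after the paper recalls the equivalence $\Fun^L(\Sh(H_1),\Sh(H_2))\simeq\Fun^R(\Sh(H_2),\Sh(H_1))^{op}$ and observes that $\Psi_K$ is the right adjoint of $\Phi_K$, so the result is obtained by composing these with \autoref{Kernel-functor corresponding}. Your explicit verification of the adjunction $\Phi_K\dashv\Psi_K$ via the six-functor adjunctions simply spells out what the paper calls ``obvious''.
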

\begin{RMK}\autoref{Kernel-functor corresponding} and \autoref{Kernel-functor corresponding right} are only true on the $\infty$-category level. In the classical theory of microlocal sheaves, many functors are built as triangulated convolution (nvolution) functors, which do not admit similar results. Especially, we do not know if the kernel of triangulated convolution (nvolution) functors are unique in general.
\end{RMK}

\section{Microsupport of sheaves}\label{Section: microsupport}
From now on, we assume $M$ is a smooth manifold. Regarding the microlocal theory of sheaves in the $\infty$-categorical setup, we remark that all arguments of \cite{KS90} work well provided we have the non-characteristic deformation lemma \cite[Proposition 2.7.2]{KS90} for all sheaves. When $\bfk$ is compactly generated, the non-characteristic deformation lemma is proven for all hypersheaves (\cite{Amicrolocallemma_infinitycat}), and then for all sheaves because of hypercompleteness for manifolds (\cite[7.2.3.6, 7.2.1.12]{HTT}, and \cite[Section 1]{Haine-porta-Teyssier-homotopy-inv-constr}).

To any object $F \in \Sh(M)$, one can associate a conic closed set $SS(F) \subset T^*M$ ({\cite[Definition 5.1.2]{KS90}}), which satisfies the following triangle inequality: for a fiber sequence $F \rightarrow G \rightarrow H$, we have $SS(F) \subset SS(G) \cup SS(H)$. 

We recall the following definition and result regarding on an equivalent definition of microsupport.
\begin{Def}{\cite[Definition 3.1]{guillermouviterbo_gammasupport}}Let $\Omega \subset T^*M \setminus 0_M$ be an
open conic subset.  We call {\em $\Omega$-lens} a locally closed subset $\Sigma$ of $M$
with the following properties: $\overline{\Sigma}$ is compact and there exists an open
neighborhood $U$ of $\overline{\Sigma}$ and a function $g\colon U \times [0,1] \to \bR$
\begin{enumerate}
\item $dg_t(x) \in \Omega$ for all $(x,t) \in U \times [0,1]$, where
  $g_t = g|_{U\times\{t\}}$,
\item $\{g_t<0\} \subset \{g_{t'}<0\}$ if $t\leq t'$,
\item the hypersurfaces $\{g_t=0\}$ coincide on $U\setminus \overline{\Sigma}$,
\item $\Sigma = \{g_1<0\} \setminus \{g_0<0\}$.
\end{enumerate}
    
\end{Def}

\begin{Lemma}{\cite[Lemma 3.2, 3.3]{guillermouviterbo_gammasupport}}\label{lemma: Omega-lens} Let $F \in \Sh(M)$ and let $\Omega \subset T^*M \setminus 0_M$ be an open conic subset.  Then following are equivalent: 1) $SS(F) \cap \Omega = \emptyset$, 2) $\HOM(1_\Sigma, F) \simeq 0$ for any $\Omega$-lens $\Sigma$, 3) $\Gamma(M; 1_\Sigma \otimes F) \simeq 0$ for any $(-\Omega)$-lens $\Sigma$.
\end{Lemma}

\begin{RMK}\label{remark: compactly generated}The proof of this lemma needs the non-characteristic deformation lemma (\cite[Proposition 2.7.2]{KS90}, \cite{Amicrolocallemma_infinitycat}), which is not true if $\bfk$ is not compactly generated \cite[Remark 4.25]{Efimov-K-theory}.

The lemma motivates a new definition\footnote{The author learned this idea from St\'ephane Guillermou and Marco Volpe.} of microsupport using $\Omega$-lens and the equivalent
conditions (2) or (3), which then avoids the use of non-characteristic deformation. The definition works very well without $\bfk$ being compactly generated, and it is very likely equivalent to the one proposed in \cite[Remark 4.24]{Efimov-K-theory}. One can expect many results in microlocal sheaf theory are still true with this definition (in the case that $\bfk$ is not compactly generated). Nevertheless, we will not discuss further the extension of the definition of microsupport in this article, and restrict ourselves in the compactly generation case for safety.  
\end{RMK}

The lemma leads to the following property of microsupports, where the microlocal cut-off lemma in \cite[Section 5.2]{KS90} is not involve.
\begin{Prop}[{\cite[Exercise V.7]{KS90}}]\label{closure property of microsupport}For a set of sheaves $F_\alpha\in \Sh(M)$ indexed by $\alpha \in A$, we have 
\[SS(\prod_{\alpha}F_\alpha)\cup SS(\bigoplus_{\alpha}F_\alpha) \subset \overline{\bigcup_\alpha SS(F_\alpha)}.\]     
\end{Prop}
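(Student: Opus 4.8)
Set $Z := \overline{\bigcup_\alpha SS(F_\alpha)}$, a conic closed subset of $T^*M$. Since $Z$ is conic and closed it suffices to fix a point $p = (x_0;\xi_0)\notin Z$ and show $p\notin SS(\prod_\alpha F_\alpha)$ and $p\notin SS(\bigoplus_\alpha F_\alpha)$; we may and do choose \emph{once and for all} an open conic neighbourhood $\Omega$ of $p$ with $\Omega\cap SS(F_\alpha)=\emptyset$ for \emph{every} $\alpha$ — this simultaneity is the whole point. If $\xi_0 = 0$ the claim is elementary: then $x_0\notin\supp(F_\alpha)$ for all $\alpha$, so there is one open $U\ni x_0$ with $F_\alpha|_U\simeq 0$ for all $\alpha$, whence $(\prod_\alpha F_\alpha)|_U\simeq\prod_\alpha(F_\alpha|_U)\simeq 0$ and likewise $(\bigoplus_\alpha F_\alpha)|_U\simeq 0$ (restriction to an open set preserves all limits and colimits), so $p$ lies in neither microsupport. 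Assume henceforth $\xi_0\neq 0$. I would first dispose of the direct sum by reducing it to a filtered colimit: $\bigoplus_\alpha F_\alpha\simeq\varinjlim_S\bigoplus_{\alpha\in S}F_\alpha$ over the filtered poset of finite subsets $S\subseteq A$, and the triangulated inequality gives inductively $SS(\bigoplus_{\alpha\in S}F_\alpha)\subseteq\bigcup_{\alpha\in S}SS(F_\alpha)$, which is still disjoint from $\Omega$. Thus in both cases it remains to show: if $(G_i)_{i\in I}$ is a cofiltered (resp.\ filtered) diagram with $\Omega\cap SS(G_i)=\emptyset$ for all $i$, then $\Omega\cap SS(\varprojlim_i G_i)=\emptyset$ (resp.\ $\varinjlim$).

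The plan is to detect the condition ``$\Omega\cap SS(G)=\emptyset$'' by a functor that commutes with the relevant (co)limits. By the non-characteristic deformation lemma \cite[Proposition 2.7.2]{KS90}, this condition holds as soon as: for every \emph{admissible} pair of open sets $U_0\subseteq U_1$ — meaning $\overline{U_1\setminus U_0}$ is compact and the conormal directions of the deformation carrying $U_0$ to $U_1$ all lie in $\Omega$ — the restriction $R\Gamma(U_1;G)\to R\Gamma(U_0;G)$ is an isomorphism, equivalently $\HOM_{\Sh(M)}(1_{U_1\setminus U_0},G)\simeq 0$; conversely, after shrinking $\Omega$ slightly, the usual ``lens'' construction near points of $\Omega$ shows that the vanishing of this family of functors is also \emph{necessary} for $\Omega\cap SS(G)=\emptyset$ (this is the microlocal cut-off mechanism of \cite[Section~5.1, 5.2]{KS90}). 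The two features that make everything work are: (i) admissibility of $(U_0,U_1)$ is a condition on $\Omega$ and on the geometry of $U_1\setminus U_0$ alone, \emph{not} on any particular sheaf, so the same family of admissible pairs serves simultaneously for all the $F_\alpha$; and (ii) for such admissible pairs $U_1\setminus U_0$ may be taken to be a compact set homeomorphic to a closed ball (a ``lens''), and the extension by zero $1_{U_1\setminus U_0}$ of the constant sheaf on such a set is a \emph{compact} object of $\Sh(M)$ — equivalently $\HOM_{\Sh(M)}(1_{U_1\setminus U_0},-)$ commutes with filtered colimits on the manifold $M$ (whereas $1_U$ for $U$ open is typically \emph{not} compact, which is exactly why one cannot work with the naive stalk test functor).

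Granting this reformulation, both cases close at once. The functor $\HOM_{\Sh(M)}(1_{U_1\setminus U_0},-)$ preserves all limits, and by (ii) it also preserves filtered colimits; since $\HOM_{\Sh(M)}(1_{U_1\setminus U_0},G_i)\simeq 0$ for every admissible pair and every $i$, we get
\[
\HOM_{\Sh(M)}(1_{U_1\setminus U_0},\varprojlim_i G_i)\simeq\varprojlim_i\HOM_{\Sh(M)}(1_{U_1\setminus U_0},G_i)\simeq 0
\]
for every admissible pair (and the analogous statement with $\varinjlim$), whence $\Omega\cap SS(\varprojlim_i G_i)=\emptyset$ (resp.\ $\varinjlim$). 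Taking $G_i = F_\alpha$ (resp.\ $G_S=\bigoplus_{\alpha\in S}F_\alpha$) finishes the proof. I expect the genuine obstacle to be precisely the product — equivalently cofiltered-limit — half: the naive microsupport test $G\mapsto(\Gamma_{\{\varphi\geq\varphi(x)\}}G)_{x}$ involves a stalk, and stalks do \emph{not} commute with infinite products, so one is forced to recast the microsupport criterion in the compactly-supported form $\HOM_{\Sh(M)}(1_K,-)$ above and to verify carefully — via \cite[Proposition 2.7.2]{KS90} — that all the auxiliary geometric data (lenses, deformations, the conormal conditions) can be chosen uniformly in the index $\alpha$. The colimit half is, by contrast, just the (standard) compatibility of microsupport with filtered colimits.
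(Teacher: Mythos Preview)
Your approach is essentially the one the paper cites from \cite[Proposition 3.4]{guillermouviterbo_gammasupport}: reformulate the microsupport test as the vanishing of $\HOM_{\Sh(M)}(1_K,-)$ for a uniform family of compact ``lens'' regions $K$, so that the test functor commutes with arbitrary limits (trivially) and with filtered colimits (by compactness of $1_K$), and then feed in the non-characteristic deformation lemma. The identification of the obstacle --- that stalks do not commute with products, forcing one to recast the criterion in terms of compactly supported test objects --- is exactly the point of that argument.

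One caution: you invoke ``the microlocal cut-off mechanism of \cite[Section~5.1, 5.2]{KS90}'' for the converse direction. In the logic of this paper that reference to Section~5.2 must be dropped: the paper is using this very proposition as an input to its non-linear cut-off lemma, and explicitly stresses that the proof must avoid the Kashiwara--Schapira cut-off lemma to prevent circularity. The converse you need --- that vanishing of $\HOM(1_K,G)$ for the lens family forces $\Omega\cap SS(G)=\emptyset$ --- follows from the \emph{definition} of microsupport (Section~5.1 only) together with a choice of test functions $\varphi$ whose superlevel sets near $x_0$ are themselves compact lenses (e.g.\ $\varphi(x)=\langle x-x_0,\xi_0\rangle - C|x-x_0|^2$ in local coordinates); no appeal to Section~5.2 is required.
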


For $X_n\subset X$ with $n\in \bN$, we set
\begin{equation*}
    \begin{split}
        {\limsup_n} X_n &= \bigcap_{N\geq 1}\overline{\bigcup_{n\geq N} X_n}\\
       &= \{x: \exists (x_n) \text{ such that }x_n\in X_n\text{ for infinitely many }n, \, x_n\rightarrow x\},\\
        {\liminf_n} X_n &= \{x: \exists (x_n)\text{ such that }x_n\in X_n\text{ for all }n, \, x_n\rightarrow x\}.
    \end{split}
\end{equation*}
By definition, we have ${\liminf_n} X_n \subset {\limsup_n} X_n$. As a corollary of \autoref{closure property of microsupport}, we have
\begin{Coro}[{\cite[Proposition 6.26]{guillermouviterbo_gammasupport}}]\label{coro: colimit estimation}Denote $N(\bN)$ the nerve of the $1$-category $\bN$. For a functor $N(\bN)\rightarrow \Sh(X)$, we have
\[SS(\varinjlim_{n}F_n)  \subset   {\liminf_n}\, SS(F_n).\] 
    \end{Coro}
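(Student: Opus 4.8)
The plan is to reduce the sequential colimit to an infinite direct sum via the mapping telescope, and then use cofinality of subsequences to upgrade the crude bound $\overline{\bigcup_n SS(F_n)}$ to the sharp bound $\liminf_n SS(F_n)$.

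First I would record the standard telescope presentation: since $\Sh(X)$ is stable and cocomplete, any functor $N(\bN)\to\Sh(X)$, say $F_0\xrightarrow{u_0}F_1\xrightarrow{u_1}\cdots$, satisfies
\[\varinjlim_{n}F_n\simeq \cofib{\bigoplus_{n}F_n\xrightarrow{\id-u}\bigoplus_{n}F_n},\]
where $u$ carries the $n$-th summand into the $(n+1)$-st via $u_n$. Feeding this into the triangulated inequality for $SS$ (which controls the microsupport of a cofiber by those of its source and target) and then applying \autoref{closure property of microsupport} to the direct sum gives $SS(\varinjlim_n F_n)\subset SS(\bigoplus_n F_n)\subset\overline{\bigcup_n SS(F_n)}$. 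The point is that the same computation may be run after replacing the diagram by any cofinal subdiagram: for a strictly increasing sequence $n_0<n_1<\cdots$, the inclusion $\{n_k\}\hookrightarrow\bN$ is cofinal (the subposet $\{n_k:n_k\ge n\}$ is nonempty and totally ordered, hence filtered), so $\varinjlim_n F_n\simeq\varinjlim_k F_{n_k}$ and therefore
\[SS(\varinjlim_n F_n)\ \subset\ \overline{\bigcup_k SS(F_{n_k})}\qquad\text{for every subsequence }(n_k).\]
Taking the subsequences to be the tails $\{n\ge N\}$ already recovers $SS(\varinjlim_n F_n)\subset\limsup_n SS(F_n)$; allowing arbitrary subsequences will produce $\liminf$.

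It then remains to intersect over all subsequences and check the purely point-set identity $\bigcap_{(n_k)}\overline{\bigcup_k X_{n_k}}=\liminf_n X_n$ for subsets $X_n\subset T^*X$, the intersection running over all strictly increasing sequences $(n_k)$. The inclusion $\supseteq$ is immediate from the description of $\liminf$ recalled in the text: a witnessing sequence $x_n\in X_n$, $x_n\to x$, restricts to a witnessing sequence for every subsequence. For $\subseteq$: if $x\notin\liminf_n X_n$ then, using that $T^*X$ is first countable, there is an open $U\ni x$ with $X_n\cap U=\emptyset$ for infinitely many $n$; along those indices $\bigcup_k X_{n_k}\subset T^*X\setminus U$, a closed set avoiding $x$, so $x\notin\overline{\bigcup_k X_{n_k}}$. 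Applying this with $X_n=SS(F_n)$ and combining with the displayed bound finishes the proof. I do not expect a genuine obstacle here; the two places needing care are writing the telescope fiber sequence correctly in the $\infty$-categorical setting and, above all, not skipping the cofinality-of-subsequences step — that is precisely the input that sharpens $\limsup$ to $\liminf$.
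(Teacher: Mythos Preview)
Your proof is correct and follows essentially the same route as the paper's: both use the telescope cofiber sequence together with cofinality of arbitrary strictly increasing subsequences, apply \autoref{closure property of microsupport} and the triangle inequality to obtain $SS(\varinjlim_n F_n)\subset\overline{\bigcup_k SS(F_{n_k})}$ for every subsequence, and then invoke the point-set fact that $x\notin\liminf_n X_n$ yields a subsequence whose union avoids a neighborhood of $x$. The only cosmetic difference is that you package the last step as the identity $\bigcap_{(n_k)}\overline{\bigcup_k X_{n_k}}=\liminf_n X_n$, whereas the paper passes through the intermediate $\limsup$ bound and then finds a single subsequence witnessing $x\notin\limsup_i SS(F_{n_i})$.
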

\begin{RMK}\label{remark: colimit using telescope}
We can also denote $\bN$ as a simplicial set which consists of all vertices together with the edges that join consecutive integers, and no higher simplexes. Then the natural inclusion of simplicial sets $\bN \rightarrow N(\bN)$ is cofinal. Therefore, we can compute the colimit using a mapping telescope construction as in triangulated categories, and the proof of \autoref{coro: colimit estimation} is the same as in the references.
\end{RMK} 
For our later application, we present its proof here.
\begin{proof}
By \autoref{remark: colimit using telescope}, and the fact that all strictly increasing sequences are cofinal in $\bN$, we have cofiber sequences for all strictly increasing sequences $\{n_i\}_{i \in \mathbb{N}}$,
\[ \bigoplus_{i}F_{n_i} \rightarrow \bigoplus_{i}F_{n_i} \rightarrow   \varinjlim_{n}F_n .\]
We first take $n_i=i+N$ for all $N\geq 1$, and then we have $SS(\varinjlim_{n}F_n) \subset  {\limsup_n} SS(F_n)$ by \autoref{closure property of microsupport} and the triangle inequality.

Now, take $x\notin {\liminf_n} SS(F_n)$. By the definition of $ {\liminf_n}  $, we can find a strictly increasing sequence $n_i$ such that $x \notin {\limsup_i} SS(F_{n_i})$, then we have $x \notin SS(\varinjlim_{i} F_{n_i}) = SS(\varinjlim_{n} F_{n})$. That is, $SS(\varinjlim_{n} F_{n}) \subset {\liminf_n} SS(F_n)$.
\end{proof}

Another application of \autoref{lemma: Omega-lens} is the following property, which could be understood as a version of $1$-dimensional microlocal cut-off lemma. We denote $i_s:M\to M\times \bR,x\mapsto (x,s)$. 
\begin{Prop}\label{prop: continuation map using from lens}For $F\in \Sh(M\times \bR)$, if $SS(F)\subset T^*M \times \bR\times (-\infty,0]$, then for $a\leq b$ there exists a morphism $c(a,b):i^*_aF \rightarrow i^*_bF$.
\end{Prop}
\begin{proof}Let $\Omega=T^*M\times \bR\times (0,\infty)$. For any relatively compact open set $U\subset M$, we take a $\Omega$-lens $\Sigma=U\times [y,z)$ for $y<z$. Therefore, by \autoref{lemma: Omega-lens}, the following morphism is naturally an equivalence for $x<y<z$:
\[F(U\times (x,z))\xrightarrow{\simeq} F(U\times (x,y)).\]
For general $U$, the equivalence is still true: We take an open cover $\{U_\alpha\}_\alpha$ of $U$, and then we use $F(U\times (y,z))=\varprojlim_\alpha F(U_\alpha\times (y,z))$ since $F$ is a sheaf.

Then we can define a morphism for open sets $U\subset M$, which is natural with respect to $U$:
\[F(U\times (a-\varepsilon,a+\varepsilon)) \simeq F(U\times (a-\varepsilon,b+\varepsilon)) \rightarrow F(U\times (b-\varepsilon,b+\varepsilon)).     \]

Pass to $\varinjlim_{\varepsilon \rightarrow 0}$, we define a morphism of presheaves 
\[c^{pre}(a,b):i^{*,pre}_aF \rightarrow i^{*,pre}_bF,\]
where we denote $i^{*,pre}_s$ the presheaf pullback. 

Then we define $c(a,b)$ as the sheafification of $c^{pre}(a,b)$.
\end{proof}

Lastly, we need the following microsupport estimation of (co)nvolution functors:
\begin{Prop}[{\cite[Lemma 4.4, Proposition 4.5]{Kuo-Li-Duality2024}}]\label{prop: convolution estimation in general}Let $K\in \Sh(N\times M)$ with $SS(K)\subset (-X)\times Z$ for conic closed sets $X\subset T^*N$, $Z\subset T^*M$, then we have, for $F\in \Sh(N)$,
\[SS(\Phi_K(F)) \subset Z,\quad SS(\Psi_K(F))\subset X.\]
\end{Prop}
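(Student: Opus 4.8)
The plan is to reduce the statement to the microsupport estimates for the six operations together with the microsupport bound for a kernel in a product. Write $p_1 : N \times M \to N$, $p_2 : N \times M \to M$ for the projections, so that $\Phi_K(F) = p_{2!}(p_1^{-1} F \otimes K)$ and $\Psi_K(F) = p_{1*}\HHOM(K, p_2^! F)$ by definition. For the first bound, I would proceed in three steps. First, estimate $SS(p_1^{-1} F)$: by the behavior of microsupport under the inverse image along the submersion $p_1$ (\cite[Proposition 5.4.5]{KS90}), we have $SS(p_1^{-1} F) \subset SS(F) \times 0_M \subset T^*N \times 0_M$. Second, estimate the tensor product $p_1^{-1} F \otimes K$: since $SS(A \otimes B) \subset SS(A) + SS(B)$ for sheaves (using that $A \otimes B = \delta^{-1}(A \boxtimes B)$ with $\delta$ the diagonal, or \cite[Proposition 5.4.14]{KS90}), and $SS(p_1^{-1} F) + SS(K) \subset (T^*N \times 0_M) + ((-X) \times Z) \subset T^*N \times Z$, we get $SS(p_1^{-1} F \otimes K) \subset T^*N \times Z$. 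Third, push forward along $p_2$: the direct image estimate (\cite[Proposition 5.4.4]{KS90}) gives, for any sheaf $G$ on $N \times M$, that $SS(p_{2!} G)$ is contained in the set of $(m, \eta)$ such that $(0_N, m; 0, \eta) \in SS(G)$ — more precisely $SS(p_{2!} G) \subset p_{2\pi}(p_2^{-1}_d{}^{-1}(SS G))$ in the notation of loc. cit. Applying this with $G = p_1^{-1} F \otimes K$ and using $SS(G) \subset T^*N \times Z$ together with the fiber requirement that the $N$-covector vanishes, we land in $Z$, i.e. $SS(\Phi_K(F)) \subset Z$.

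For the second bound, I would run the dual argument. We have $p_2^! F = p_2^{-1} F \otimes \omega_{N \times M / M}$ where $\omega_{N \times M/M}$ is the relative dualizing sheaf, which is locally constant along the fibers of $p_1$, so $SS(p_2^! F) \subset 0_N \times SS(F) \cup 0_N \times 0_M \subset 0_N \times T^*M$ (using that $p_2$ is a submersion and the relative orientation sheaf has microsupport in the zero section). Next, estimate $\HHOM(K, p_2^! F)$: one has $SS(\RHHOM(A, B)) \subset SS(B) - SS(A) = SS(B) + SS(A)^a$ (\cite[Proposition 5.4.14]{KS90}, where $(\cdot)^a$ is the antipode), and since $SS(K)^a \subset X \times (-Z)$, we obtain $SS(\HHOM(K, p_2^! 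F)) \subset (0_N \times T^*M) + (X \times (-Z)) \subset X \times T^*M$. Finally, push forward along $p_1$ with the proper-direct-image estimate: $SS(p_{1*} H)$ for $H$ on $N \times M$ is controlled by those $(n, \xi)$ with $(n, 0_M; \xi, 0) \in SS(H)$, and combined with $SS(H) \subset X \times T^*M$ and the vanishing of the $M$-covector we land in $X$, i.e. $SS(\Psi_K(F)) \subset X$.

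A technical point to address carefully is that the raw direct-image estimate $SS(p_{2!} G) \subset p_{2\pi}(\widehat{p_2^{-1}_d}{}^{-1}(SS G))$ requires $p_2$ to be proper on $\supp G$ for the equality $p_{2!} = p_{2*}$ and for the estimate to apply cleanly; when properness fails one still has the inclusion for $Rp_{2*}$ after a standard argument, or one uses that the estimate for $p_{2!}$ holds unconditionally on a submersion (as the fibers are the contractible directions). Since the statement is only an inclusion of microsupports and $Z$, $X$ are conic \emph{closed}, the non-proper case follows by exhausting by relatively compact opens and invoking \autoref{closure property of microsupport} on the resulting colimit, exactly as in \cite{Kuo-Li-Duality2024}; I would cite that reference for this point rather than reprove it. The main obstacle, and the only place where real care is needed, is bookkeeping the Minkowski sums and the antipode signs so that the contributions along the $N$-directions (resp. $M$-directions) in $SS(K)$ get killed by the requirement that the covector in the collapsed factor vanishes after the pushforward — everything else is a direct concatenation of the standard operation bounds.
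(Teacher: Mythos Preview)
The paper does not supply its own proof here; the proposition is quoted from \cite{Kuo-Li-Duality2024}. Your six-functor bookkeeping is the natural route and the overall architecture is right, but there is one genuine gap and one point that needs more care.

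The gap is in the tensor and $\HHOM$ steps. The bound you quote, \cite[Proposition~5.4.14]{KS90}, carries a non-characteristic hypothesis ($SS(A)\cap SS(B)^a\subset 0$ for $\otimes$, and $SS(A)\cap SS(B)\subset 0$ for $\RHHOM$), which you neither state nor verify, and which in fact fails in the generality of the proposition. For the $\Phi_K$ half,
\[
SS(p_1^{-1}F)\cap SS(K)^a\ \subset\ (SS(F)\times 0_M)\cap\bigl(X\times(-Z)\bigr)\ =\ (SS(F)\cap X)\times(0_M\cap Z),
\]
and this lies in the zero section only if $0_M\cap Z=\varnothing$ or $SS(F)\cap X\subset 0_N$; neither is assumed (indeed the paper often works under $0_M\subset Z$). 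The repair is straightforward: replace the naive sum by the unconditional estimate with the operation $\widehat{+}$ from \cite[\S6.2--6.4]{KS90}. Because the $M$-component of $SS(p_1^{-1}F)$ is the zero section, every limit point contributing to $(T^*N\times 0_M)\,\widehat{+}\,((-X)\times Z)$ still has $M$-component in the closed set $Z$, so one recovers $SS(p_1^{-1}F\otimes K)\subset T^*N\times Z$. The $\HHOM$ step admits the same correction.

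On the pushforwards: your exhaustion-plus-colimit treatment of $p_{2!}$ is correct and is exactly the device used in \cite{Kuo-Li-Duality2024}. But your closing paragraph only covers the $!$-case. For $\Psi_K$ you need the dual statement for $p_{1*}$, which is a cofiltered \emph{limit} rather than a colimit; you should say explicitly that one invokes the product half of \autoref{closure property of microsupport} (and the evident limit analogue of \autoref{coro: colimit estimation}) rather than leaving this implicit.
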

Let us also give a proof of this proposition using $\Omega$-lens.
\begin{proof}We assume that $X$ and $Z$ contain the
zero section, the general case can be proven using $SS(F)\cap 0_M=\operatorname{supp}(F)$. By \autoref{lemma: Omega-lens}, we only need to show that for a $-(T^*M\setminus Z)$-lens $\Sigma$, we have
\[\Gamma(M, 1_\Sigma \otimes \Phi_K(F))=0.\]
By definition of $\Omega$-lens, the closed set $\overline{\Sigma}$ is compact, then we can replace $\Gamma=p_{2*}$ by $\Gamma_c=p_{2!}$. Then we use base change formula and projection formula to see that we have
\[\Gamma(M, 1_\Sigma \otimes \Phi_K(F))=\Gamma_c(N\times M, (F\boxtimes 1_\Sigma )\otimes K).\]
Notice that $F=\varinjlim_\alpha 1_{W_\alpha}$ for relatively compact open sets $W_\alpha$ by \cite[Proposition 5.4.5]{Gaitsgory-Rozenblyum} and the definition of sheaves. We have
\[\Gamma(M, 1_\Sigma \otimes \Phi_K(F))=\varinjlim_\alpha \Gamma(N\times M, (1_{W_\alpha\times \Sigma} )\otimes K).\]  
Therefore, notice that 
\[W_\alpha\times \Sigma\]
is a ${T}^*N\times (-(T^*M\setminus Z))= - {T}^*N\times (T^*M\setminus Z)$-lens, we can conclude by \autoref{lemma: Omega-lens} that 
\[\Gamma(N\times M, (1_{W_\alpha\times \Sigma} )\otimes K)=0\]
since $SS(K)\subset (-X)\times Z \subset T^*N\times Z$.

The statement about $\Psi_K(F)$ can be proven using \autoref{lemma: Omega-lens}-(2) and a similar argument. We leave the details to the readers.
\end{proof}

\section{Split Verdier sequence from microlocalization}\label{section: verdier seq from microlocalization}
Throughout this article, we set $Z\subset T^*M$ to be a conic closed set and $U=T^*M\setminus Z$ (which is a conic open set). We set $\Sh_Z(M)$ as the full subcategory of $\Sh(M)$ spanned by sheaves $F$ with $SS(F)\subset Z$.

\begin{Prop}\label{prop: refletive}The category $\Sh_Z(M)$ is a stable subcategory of $\Sh(M)$ closed under small limits and colimits.

In particular: 1) The inclusion $\iota:\Sh_Z(M) \rightarrow  \Sh(M) $ admits both left and right adjoints. 2) $\Sh_Z(M)$ is $\bfk$-linear. 
\end{Prop}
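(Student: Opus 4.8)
The plan is to derive everything from two inputs: the triangulated inequality for microsupport together with \autoref{closure property of microsupport}, which are purely geometric, and the presentability of $\Sh_Z(M)$, which is the only point that is not formal. Granting presentability, the two adjoints come from the adjoint functor theorem and $\bfk$-linearity from the compatibility of microsupport with the $\bfk$-action.

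\emph{Stability and closure under small limits and colimits.} Since $SS(0)=\varnothing$ and $SS(F[1])=SS(F)$, the subcategory $\Sh_Z(M)$ contains a zero object and is closed under shifts; and if $A\to B\to C$ is a fiber sequence with $B,C\in\Sh_Z(M)$ then $SS(A)\subseteq SS(B)\cup SS(C)\subseteq Z$, so $A\in\Sh_Z(M)$, and dually for cofibers. Hence $\Sh_Z(M)$ is a stable subcategory and $\iota$ is exact; in particular it has all finite limits and colimits, computed in $\Sh(M)$. Since a category admitting finite colimits and small coproducts admits all small colimits, and a functor preserving the former preserves the latter (and dually for limits), it remains to treat small coproducts and products, and these are handled by \autoref{closure property of microsupport}: if $F_\alpha\in\Sh_Z(M)$ for all $\alpha$, then $SS(\bigoplus_\alpha F_\alpha)\cup SS(\prod_\alpha F_\alpha)\subseteq\overline{\bigcup_\alpha SS(F_\alpha)}\subseteq Z$, the last inclusion because $Z$ is closed. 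Therefore $\Sh_Z(M)$ is closed under all small limits and colimits and $\iota$ preserves them.

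\emph{Presentability and the adjoints.} The category $\Sh(M)$ is presentable stable, so it suffices to show $\Sh_Z(M)$ is accessible: being also closed under small colimits it is then presentable. For this I would use the test-function description of the microsupport: for $x_1\in M$ and a $C^1$ function $\phi$ with $\phi(x_1)=0$, set $T_{x_1,\phi}=i_{x_1}^{-1}\circ\Gamma_{\{\phi\geq 0\}}\colon\Sh(M)\to\bfk$; then $F\in\Sh_Z(M)$ if and only if $T_{x_1,\phi}(F)\simeq 0$ for every such pair with $(x_1,d\phi(x_1))\in U$ (using that $U$ is open). Each $T_{x_1,\phi}$ is accessible, so each $\ker T_{x_1,\phi}$ is an accessible subcategory of $\Sh(M)$; as the indexing pairs form a set and accessible categories are closed under small limits, $\Sh_Z(M)=\bigcap_{x_1,\phi}\ker T_{x_1,\phi}$ is accessible (alternatively, one may invoke presentability of $\Sh_Z(M)$ from the literature). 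Now $\iota\colon\Sh_Z(M)\to\Sh(M)$ is a colimit-preserving functor between presentable categories, hence admits a right adjoint, and it is accessible and limit-preserving, hence admits a left adjoint; this proves (1). For (2): the functors $i_{x_1}^{-1}$ and $\Gamma_{\{\phi\geq 0\}}=i_*i^!$ are $\bfk$-linear (the latter because $i_*$ and $i^!$ are adjoints of $\bfk$-linear functors and $\bfk$ is rigid), so each $T_{x_1,\phi}$ commutes with the $\bfk$-action; hence $T_{x_1,\phi}(F)\simeq 0$ forces $T_{x_1,\phi}(k\otimes F)\simeq k\otimes T_{x_1,\phi}(F)\simeq 0$ for all $k\in\bfk$. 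Thus $\Sh_Z(M)$ is a $\bfk$-submodule of $\Sh(M)$, so it inherits a $\bfk$-linear presentable structure, and by rigidity the adjoints of the $\bfk$-linear inclusion $\iota$ are $\bfk$-linear too.

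The main obstacle is exactly the presentability of $\Sh_Z(M)$: closure under small limits and colimits does not by itself force presentability, so the accessibility of $\Sh_Z(M)$ — which I would extract from the test-function presentation above, or cite — is the one non-formal ingredient; everything else is bookkeeping with the microsupport estimates and the adjoint functor theorem.
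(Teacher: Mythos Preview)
Your argument is correct and runs along the same lines as the paper's: stability and closure under small (co)limits from the triangulated inequality together with \autoref{closure property of microsupport}, and then the adjoint functor theorem for both adjoints. The one substantive difference is how presentability is secured. The paper appeals directly to a general reflection theorem --- a full subcategory of a presentable category closed under limits and filtered colimits is automatically presentable and reflective --- and extracts $\bfk$-linearity from the same reference combined with rigidity of $\bfk$. You instead exhibit $\Sh_Z(M)$ explicitly as a set-indexed intersection of kernels of the accessible test functors $T_{x_1,\phi}$, which is more hands-on and has the pleasant side effect of directly verifying closure under the $\bfk$-action via the $\bfk$-linearity of each $T_{x_1,\phi}$. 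Both routes are sound; the paper's is shorter but black-boxes the accessibility step, while yours is self-contained and makes that step visible.
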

\begin{proof}
By the triangle inequality of microsupport, we only need to show that $\Sh_Z(M)$ is closed under small products and coproducts, which follows from \autoref{closure property of microsupport}.

The inclusion $\iota$ admits both adjoints by the adjoint functor theorem \cite[5.5.2.9]{HTT}; $\Sh_Z(M)$ is presentable and $\bfk$-linear by \cite{Reflection-Theorem-infty} since $\iota$ is reflective and $\bfk$ is locally rigid.
\end{proof}
\begin{RMK}\begin{enumerate}[fullwidth]
    \item This is the only place of the article that we need explicitly use the local rigidity assumption of $\bfk$.
    \item Another way to formulate \autoref{lemma: Omega-lens} is that, the left (right) semi-orthogonal complement $\Sh_Z(M)$ is generated by $1_\Sigma$ for $T^*M\setminus Z$-lens ($- (T^*M\setminus Z)$-lens under colimits (limits). This observation was also provided in \cite{JinTruemann2017}, where the condition (3) of \autoref{lemma: Omega-lens} was formulated in terms of dual of $1_\Sigma$. It does not make a significant difference. as we can assume the lens $\Sigma$ is small enough that we can orient a small neighborhood of $\Sigma$.
\end{enumerate}

\end{RMK}
\begin{Def}[{\cite[Definition 6.1.1]{KS90}}]For the conic open set $U\subset T^*M$, we set
\[\Sh(M;U)\coloneqq \Sh(M)/\Sh_Z(M).\]  
\end{Def}
We refer to \cite[Section 5]{universal-hihger-K}, \cite[Section 1.3]{Nikolaus-Scholze-Ontopologicalcyclichomology} and \cite[Appendix A]{Hermitian-K-theory} for basic results of Verdier quotients of stable categories. Here, we say that $\cC\xrightarrow{\iota}\cD \xrightarrow{j} \cD/\cC$ is a \textit{split Verdier sequence} if $\iota$ admits both left and right adjoints, see \cite[A.2.4, A.2.6]{Hermitian-K-theory}. We denote $^{\perp}\cC$ (resp. $\cC^{\perp}$) as the left (resp. right) semi-orthogonal complement of $\cC$ in $\cD$, and we can identify $^{\perp}\cC$ (resp. $\cC^{\perp}$) with $\cD/\cC$ via a left (resp. right) adjoint functor in the split case.

By \autoref{prop: refletive}, the inclusion $\iota: \Sh_Z(M) \rightarrow \Sh(M)$ defines a split Verdier sequence. Precisely, we have the diagram of functors:
\begin{equation}\label{equation: recollection diagram}
    \begin{tikzcd}
\Sh_Z(M)  \arrow[r, "\iota"] & \Sh(M)  \arrow[r, "j"] \arrow[l, "\iota^*"', bend right=30] \arrow[l, "\iota^!", bend left=30] & {\Sh(M;U).} \arrow[l, "j_!"', bend right=30] \arrow[l, "j_*", bend left=30]
\end{tikzcd}
\end{equation}
Then we have adjunction pairs
\[L_U\coloneqq j_!j \dashv j_*j \eqqcolon R_U,\qquad L_Z\coloneqq \iota\iota^* \dashv \iota\iota^! \eqqcolon R_Z,\]
and the units/counits give us the following fiber sequences of functors on $\Sh(M)$
\begin{equation}\label{equation: fiber sequence non-linear cut-off}L_U\rightarrow \id\rightarrow L_Z,\quad
        R_Z\rightarrow \id\rightarrow R_U.    
\end{equation}
By \autoref{Kernel-functor corresponding} and \autoref{Kernel-functor corresponding right}, there exists a fiber sequence in $\Sh(M\times M)$
\begin{equation}\label{equation: fiber sequence for kernel}
    K_U\rightarrow 1_{\Delta_M}\rightarrow K_Z
\end{equation}
that gives corresponding convolution/nvolution functors
\begin{equation}\label{equation: kernel for non-linear cut-off functors}
L_U=\Phi_{K_U},\,L_Z=\Phi_{K_Z},\,R_U=\Psi_{K_U},\,R_Z=\Psi_{K_Z}  
\end{equation}
and corresponding natural transformations. Therefore, in practice, to construct those functors and fiber sequences between them, we only need to write down the fiber sequence \eqref{equation: fiber sequence for kernel}.

\begin{Def}\label{Definition: Non-linaer microlocal cut-off functor}We call the functors $L_U, L_Z, R_U, R_Z$ the \textit{non-linear microlocal cut-off functors}. Corresponding kernels $K_U,K_Z$ are called microlocal cut-off kernels (or microlocal kernels for short).  
\end{Def}

For any object in $\Sh(M;U)$, the triangle inequality implies that $SS_U([F])\coloneqq SS(F)\cap U$ for any representative $F \in \Sh(M)$ is well-defined. Then we have $SS_U([F])=SS(R_U(F))\cap U =SS(L_U(F))\cap U$ for all $F\in\Sh(M)$. For conic closed sets $X\supset Z$, we consider the category $\frac{\Sh_{X}(M)}{\Sh_{Z}(M)} $ as a full subcategory of $ \Sh(M;U) $. Then the essential image of $\frac{\Sh_{X}(M)}{\Sh_{Z}(M)} $ in $ \Sh(M;U) $ is $ \Sh_{X\cap U}(M;U) $, the full subcategory spanned by objects with $SS_U([F])\subset X\cap U$. We will recall this construction in \autoref{section: pair}.

\begin{eg}\label{example: excision}
Take an open set $W\subset M$, and set $U=T^*W\subset T^*M$ with $Z=T^*M\setminus T^*W$. We naturally identify $\Sh_Z(M)$ with sheaves supported in $M\setminus W$ since $\pi_{T^*M}(SS(F))=\supp{F}$ for the cotangent projection $\pi_{T^*M}:T^*M\rightarrow M$, and we can verify that the restriction functor $(\bullet)|_W:\Sh(M)\rightarrow \Sh(W)$ exhibits $\Sh(W)$ as the Verdier quotient $\Sh(M;T^*W)$. Then we have
\[L_U(F)=F_W, \,L_Z(F)=F_{M\setminus W}, \, R_U(F)=\Gamma_{W}(F),\,R_Z(F)=\Gamma_{M\setminus W}(F), \]
and can take microlocal kernels as following:
\[K_U=1_{\Delta_W}\rightarrow 1_{\Delta_M}\rightarrow K_Z=1_{\Delta_{M\setminus W}}.\]
Therefore, the fiber sequences \eqref{equation: fiber sequence non-linear cut-off} are exactly the excision sequence for sheaves. This example also motivates the notation of functors in \eeqref{equation: recollection diagram}.
\end{eg}

In the end of this section, we state the main result of this article. Recall that we say a morphism $f:F\rightarrow G$ in $\Sh(M)$ is an isomorphism on an open set $U$ if $SS(\cofib{f})\cap U=\varnothing $, equivalently, $f$ is an isomorphism in $\Sh(M;U)$ (cf. \cite[Definition 6.1.1]{KS90}).
\begin{Thm}\label{prop: non-linear microlocal cut-off lemma}For a conic closed set $Z \subset T^*M$, and $U = T^*M \setminus Z$, we have

\begin{enumerate}
\item 
\begin{enumerate}
\item The morphisms $F\rightarrow L_Z(F)$ and $R_Z(F)\rightarrow F$ are isomorphisms if and only if $SS(F)\subset Z$.
\item The morphism $L_U(F) \rightarrow F$ is an isomorphism if and only if $F \in {}^{\perp}\Sh_Z(M)$, and the morphism $F \rightarrow R_U(F)$ is an isomorphism if and only if $F \in \Sh_Z(M)^{\perp}$.
\end{enumerate}
\item 
\begin{enumerate}

\item The morphisms $L_U(F)\rightarrow F$ and $F\rightarrow R_U(F)$ are isomorphisms on $U$.
\item If $0_M\subset Z$, the morphism $F\rightarrow L_Z(F)$ and $R_Z(F)\rightarrow F$ are isomorphisms on $\operatorname{Int}(Z)\setminus 0_M$. Equivalently, we have $SS(L_U(F))\cup SS(R_U(F)) \subset \overline{U}\cup 0_M$.

\end{enumerate}
\end{enumerate}
\end{Thm}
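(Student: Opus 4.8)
The plan is to dispatch the statements in two groups, exactly as the introduction suggests: everything except (2)-(b) is formal, while (2)-(b) needs the wrapping description of $L_Z$ from \cite{kuo2021wrapped}.

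For part (1): Statement (1)-(a) is immediate. If $SS(F)\subset Z$, then $F\in\Sh_Z(M)$, so the counit $\iota\iota^!F\to F$ and unit $F\to\iota\iota^*F$ are equivalences because $\iota^*,\iota^!$ are left/right inverse to $\iota$ on the reflective subcategory; conversely, if $F\to L_Z(F)=\iota\iota^*F$ is an equivalence then $F$ lies in the essential image of $\iota$, hence $SS(F)\subset Z$ (and symmetrically for $R_Z$). Statement (1)-(b) is literally the definition of the semiorthogonal complements attached to the split Verdier sequence \eqref{equation: recollection diagram}: $L_U(F)\to F$ is an equivalence iff $j_*jF=0$... more precisely iff $F$ lies in the image of $j_!$, which is ${}^{\perp}\Sh_Z(M)$; the unit $\id\to j_*j$ being... the cofiber of $L_U\to\id$ is $L_Z$, and $L_U(F)\to F$ is an equivalence iff $L_Z(F)=\iota\iota^*F=0$ iff $\iota^*F=0$ iff $F\in{}^\perp\Sh_Z(M)$. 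The $R_U$ case is dual, using $\iota^!F=0\iff F\in\Sh_Z(M)^\perp$.

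For (2)-(a): by \eqref{equation: fiber sequence non-linear cut-off} the cofiber of $L_U(F)\to F$ is $L_Z(F)=\iota\iota^*F$, whose microsupport lies in $Z$; hence the map is an isomorphism on $U=T^*M\setminus Z$. Dually $R_Z(F)\to\id$ has cofiber... the fiber of $\id\to R_U(F)$ is $R_Z(F)=\iota\iota^!F\in\Sh_Z(M)$, so $F\to R_U(F)$ is an isomorphism on $U$. Nothing beyond the fiber sequences and the definition of $\Sh_Z$ is used here.

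The genuinely nontrivial content is (2)-(b), and here is where I would bring in symplectic input. Assume $0_M\subset Z$. I want to show $SS(L_U(F))\subset\overline U\cup 0_M$; equivalently, that $L_Z(F)=\iota\iota^*F$ has microsupport in $Z$ but in fact its new microsupport (away from the zero section) created by the cut-off lands in $\partial U=\overline U\setminus U=\overline U\cap Z$, i.e. $F\to L_Z(F)$ is an isomorphism over $\operatorname{Int}(Z)\setminus 0_M$. The approach is to use the wrapping formula of \cite{kuo2021wrapped}: $L_Z(F)$ (equivalently $\iota^*$, the left adjoint to the inclusion of $\Sh_Z(M)$) is computed as a colimit over a category of "wrappings" of a positive-isotopy type, realized by convolution with sheaf quantizations of Hamiltonian flows supported near $Z$; concretely $L_Z(F)=\varinjlim_{w} \Phi_{K_w}(F)$ where each $K_w$ is a GKS-type kernel whose microsupport is controlled by the graph of a homogeneous Hamiltonian isotopy that wraps in the direction of $Z$. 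Using \autoref{coro: colimit estimation} (the $\liminf$ estimate for microsupport of sequential colimits, with \autoref{remark: colimit using telescope} to reduce the filtered colimit to a sequential one along a cofinal chain), together with \autoref{prop: convolution estimation in general} to bound $SS(\Phi_{K_w}(F))$ in terms of $SS(K_w)$ and $SS(F)$, one sees that a point $p\in\operatorname{Int}(Z)\setminus 0_M$ can be pushed off $SS(F)$ and kept outside the relevant microsupports along the wrapping, because near an interior point of $Z$ the wrapping Hamiltonian can be taken to move points strictly into $U$... no: the point is that the wrapping is trivial (the identity) in a neighborhood of $\operatorname{Int}(Z)$ away from $0_M$, since the construction only wraps across $\partial U$; hence $\Phi_{K_w}(F)\to\Phi_{K_{w'}}(F)$ is an isomorphism near $p$ for cofinally many $w$, forcing $p\notin SS(\operatorname{cofib}(F\to L_Z(F)))$. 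The role of $0_M\subset Z$ is to guarantee that the homogeneous Hamiltonians involved in the wrapping are genuinely defined (they blow up towards the zero section), so the estimate is only clean away from $0_M$. Finally, the $R_U$/$R_Z$ half follows by the same argument applied to nvolution functors $\Psi_{K}$, or by a duality/adjunction argument exchanging $L$ and $R$.

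I expect (2)-(b) to be the only real obstacle, and within it the crux is making precise the claim that the wrapping kernels $K_w$ from \cite{kuo2021wrapped} restrict to the constant sheaf on the diagonal over a neighborhood of $\operatorname{Int}(Z)\setminus 0_M$, so that convolution is locally the identity there; once that locality statement is in hand, \autoref{coro: colimit estimation} and \autoref{prop: convolution estimation in general} close the argument mechanically.
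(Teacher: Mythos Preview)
Your plan for (1)-(a), (1)-(b), and (2)-(a) is correct and matches the paper: these are formal consequences of the split Verdier data and the fiber sequences \eqref{equation: fiber sequence non-linear cut-off}.

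For (2)-(b) your overall strategy---use the wrapping formula and the fact that the Hamiltonians vanish on $Z$---is the same as the paper's, but the paper organizes it more cleanly and your write-up has one imprecise step. The paper does \emph{not} work with $L_Z(F)$ term by term; instead it first proves a microsupport estimate for the \emph{kernel} $K_U$ itself (\autoref{prop: microsupport estimation of kernel}): for each finite wrapping one shows $\dot{SS}(K^\circ(\phi_H)|_1)\subset(-U)\times U$, then passes to the colimit via \autoref{coro: colimit estimation} to get $\dot{SS}(K_U)\subset(-\overline U)\times\overline U$, and finally applies \autoref{prop: convolution estimation in general} \emph{once} to both $L_U=\Phi_{K_U}$ and $R_U=\Psi_{K_U}$ simultaneously. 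This handles $R_U$ without a separate duality argument.

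The point you describe as ``the wrapping kernels $K_w$ restrict to the constant sheaf on the diagonal over a neighborhood of $\operatorname{Int}(Z)\setminus 0_M$'' is not quite the right formulation: $\operatorname{Int}(Z)$ lives in $T^*M$, not in $M\times M$, so this is not a statement about restriction on the base. What you need is the microlocal version: since $H$ is supported in $U^\infty$ (not ``near $Z$'' as you wrote earlier---the flow is supported \emph{away} from $Z$), the continuation map $1_{\Delta_M}\to K(\phi_H)|_1$ induces the identity on microstalks at $(q,-p,q,p)$ whenever $(q,p)$ lies in the region where $H=0$. Hence the cofiber $K^\circ(\phi_H)|_1$ has vanishing microstalk there, which combined with the graph estimate $\dot{SS}(K^\circ(\phi_H)|_1)\subset\Lambda_{\phi_{H,1}}\cup\Lambda_{\id}$ gives $\dot{SS}(K^\circ(\phi_H)|_1)\subset(-U)\times U$. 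That microstalk computation is the crux you were circling around.
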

\begin{proof} (1)-(a) and (2)-(a) follows from definition of $L_Z$ and $L_U$ and fiber sequences \eqref{equation: fiber sequence non-linear cut-off}. (1)-(b) follows from \cite[A.2.8]{Hermitian-K-theory}. 
For (2)-(b), we will prove in \autoref{prop: microsupport estimation of kernel} that $SS(K_U) \subset  (-\overline{U})\times \overline{U} \cup 0_{M\times M}$ under the condition $0_M\subset Z$. Therefore, $SS(L_U(F))\cup SS(R_U(F))\subset \overline{U}  \cup 0_M $ follows from \autoref{prop: convolution estimation in general}.
\end{proof}

\section{Wrapping formula of non-linear microlocal cut-off functors}\label{section: wrapping formula}
In this section, we present an explicit formula of microlocal kernels using Guillermou-Kashiwara-Schapira sheaf quantization \cite{GKS2012}. We recall the results of \textit{ loc.cit.} here. 

Let $\dot{T}^*M$ be the complement of the zero section in $T^*M$, and, for subset $A\subset T^*M$, we set $\dot{A}=A \cap  \dot{T}^*M$. In particular, we have the notion of $\dot{SS}(F)$ for $F\in \Sh(M)$.

Let $(I, 0)$ be a pointed interval. Consider a $C^\infty$ conic symplectic isotopy
\[\phi: I \times \dot{T}^*M \rightarrow \dot{T}^*M\]
which is the identity at $0 \in  I$. Such an isotopy is always the Hamiltonian flow for a unique conic function $H:  I \times \dot{T}^*M  \to \bR$ and we set $\phi=\phi_H$ when emphasize the Hamiltonian functions. At fixed $z \in I$, we have the graph of $\phi_{z}$: 
\begin{equation}\label{equation: graph}
  \Lambda_{{\phi}_{z}}\coloneqq \left\lbrace ((q,-p),{\phi}_{{z}}(q,p) ) : (q,p) \in \dot{T}^*M\right \rbrace \subset \dot{T}^*M\times \dot{T}^*M\subset \dot{T}^*(M\times M) .
\end{equation}
As for any of Hamiltonian isotopy, we may consider the Lagrangian graph, which by definition is a Lagrangian subset $\Lambda_\phi \subset  T^*I \times  \dot{T}^*(M\times M) $
with the property that 
$ \Lambda_{{\phi}_{z_0}}$ is 
the symplectic reduction 
of $\Lambda_{{\phi}}$ along  $\{z=z_0\}$.  It is given by the formula: 

\begin{equation}\label{equation: totalgraph}
  \Lambda_{{\phi}}\coloneqq \left\lbrace (z, - H(z,{\phi}_{z}(q,p))  , (q,-p),{\phi}_{z}(q,p)) : z\in I, (q,p) \in \dot{T}^*M \right \rbrace 
\end{equation}

\begin{Thm}[{\cite[Theorem 3.7, Proposition 4.8]{GKS2012}}]\label{theorem: GKS} For $\phi$ as above, there is a sheaf $K=K({\phi}) \in \Sh( I\times  M^2)$ such that $\dot{SS}(K)\subset \Lambda_{{\phi}}$ and $K|_{ \{0\}\times M^2}\cong 1_{\Delta_{M}}$. The pair $(K, K|_0)\cong (K,1_{\Delta_{M}})$ is unique up to a unique isomorphism.

Moreover, for isotopies $\phi_H, \phi_{H'}$ with
$H' \leq H$, there's a map $K(\phi_{H'})|_1 \to K(\phi_H)|_1$.  
In particular, when $H \geq 0$, then there is a map $1_{\Delta_M} \to K(\phi_H)|_1$. 
\end{Thm}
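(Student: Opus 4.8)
The plan, following \cite{GKS2012}, is to construct $K$ over short time intervals from a generating family for $\Lambda_\phi$, to glue by concatenating isotopies, and to deduce uniqueness — and with it the gluing and the monotonicity maps — from the fact that a quantization kernel is invertible under convolution. Throughout, for $A,B\in\Sh(I\times M\times M)$ write $A\star B$ for the fibrewise (over $I$) convolution along the middle factor, so that $1_{I\times\Delta_M}$ is a unit for $\star$; the microlocal composition estimate for kernels (a sharpening of \autoref{prop: convolution estimation in general}; see \cite{KS90}) gives $\dot{SS}(A\star B)\subset\dot{SS}(A)\circ\dot{SS}(B)$, and this inclusion is an equality when the two microsupports are graphs of homogeneous symplectomorphisms — the only situation we shall compose. \emph{Uniqueness:} given two pairs $(K,\alpha)$ and $(K',\alpha')$ as in the statement for the same $\phi$, I would take $\bar K$ a quantization of the reversed isotopy $z\mapsto(\phi_z)^{-1}$ (which exists by the existence part, and can be written explicitly from $K$ by Verdier duality along one factor composed with the swap); then $K\star\bar K$ and $\bar K\star K$ quantize the constant isotopy and are normalized to $1_{\Delta_M}$ at $z=0$. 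Since $\Lambda_{\id}$ lies in the conormal $\dot T^*_{I\times\Delta_M}(I\times M^2)$, Kashiwara's theorem on sheaves microsupported in a conormal bundle presents any such sheaf as $q^{-1}L_0\otimes 1_{I\times\Delta_M}$ with $L_0\in\Sh(I)$ and $q:I\times M^2\to I$; restricting to the non-characteristic hypersurface $z=0$ and using connectedness of $I$ pins $L_0\simeq 1_I$ canonically, so $K\star\bar K\simeq 1_{I\times\Delta_M}\simeq\bar K\star K$. Thus $K$ is $\star$-invertible; applying the same argument to $K'\star\bar K$ and convolving the resulting trivialization with $K$ produces the unique normalized isomorphism $K'\simeq K$.

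\emph{Existence.} By the composition estimate, if $0=z_0<\dots<z_m$ subdivides $I$ and each increment $z\mapsto\phi_{z+z_{i-1}}\circ(\phi_{z_{i-1}})^{-1}$ is quantized by a kernel $K_i$ (reparametrized to live over $I$), then $K_m\star\dots\star K_1$ quantizes $\phi$: the $z=0$ normalization survives $\star$, and $\dot{SS}(K_m\star\dots\star K_1)\subset\Lambda_\phi$ because the increments are graphs, so the estimate is sharp and the suspended Lagrangians compose with the correct total Hamiltonian. Hence it suffices to build $K$ over a short interval on which $\Lambda_{\phi_z}$ stays $C^1$-close to $\dot T^*_{\Delta_M}(M^2)$. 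There $\Lambda_\phi\subset\dot T^*(I\times M^2)$ admits a homogeneous generating family $S:I\times M^2\times(\bR^N\setminus 0)\to\bR$ whose restriction at $z=0$ generates the diagonal conormal, and I would set $K=Rp_!\,1_{\{S\geq 0\}}$ with $p:I\times M^2\times(\bR^N\setminus 0)\to I\times M^2$: the standard generating-family computation — the microsupport of the constant sheaf on the closed region $\{S\geq 0\}$, cut down by the non-characteristic pushforward estimate and the Morse-family property of $S$ — gives $\dot{SS}(K)\subset\Lambda_\phi$, while base change along $z=0$ and the known identification for the diagonal conormal give $K|_{\{0\}\times M^2}\simeq 1_{\Delta_M}$ after the normalization built into $S$. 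Alternatively one proves this model case for $M=\bR^n$ and transports it through charts, the overlaps being handled by uniqueness.

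\emph{Monotonicity.} If $H'\leq H$, the composite isotopy $\psi_z=(\phi_H)_z\circ((\phi_{H'})_z)^{-1}$ is generated by a Hamiltonian $G\geq 0$. For $G\geq 0$ there is a canonical map $1_{I\times\Delta_M}\to K(\phi_G)$: in the short-time model above, $G\geq 0$ forces the region $\{S_G\geq 0\}$ to contain the region defining $1_{I\times\Delta_M}$, so inclusion of closed sets induces the arrow, and by uniqueness these local arrows glue to a global morphism, unique once required to restrict to $\alpha$ at $z=0$. Since $K(\phi_H)\simeq K(\psi)\star K(\phi_{H'})$ by the existence/uniqueness discussion, convolving $1_{I\times\Delta_M}\to K(\psi)$ on the right with $K(\phi_{H'})$ gives $K(\phi_{H'})\simeq 1_{I\times\Delta_M}\star K(\phi_{H'})\to K(\psi)\star K(\phi_{H'})\simeq K(\phi_H)$; taking $H'=0$ gives the last assertion, since then $\phi_{H'}=\id$ and $K(\phi_{H'})=1_{I\times\Delta_M}$.

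\emph{The hard part} is the short-time existence: exhibiting a generating family $S$ for $\Lambda_\phi$ that is at once conic, defined over enough of the non-compact $\dot T^*M$ (one works on the cosphere bundle times $\bR_{>0}$ and patches, since $\phi$ need not be compactly supported), and normalized so that $K|_{z=0}$ is \emph{exactly} $1_{\Delta_M}$; and then upgrading "$\dot{SS}(K)$ lies near $\Lambda_\phi$" to "$\dot{SS}(K)\subset\Lambda_\phi$", which requires the transversality of the Morse family $S$ and a careful microsupport analysis of constant sheaves on regions with corners. Everything else — concatenation, uniqueness, and the monotonicity maps — is then formal, resting only on the composition estimate for $\dot{SS}$ and Kashiwara's theorem on conormally microsupported sheaves.
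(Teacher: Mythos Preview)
The paper does not prove this theorem: it is quoted verbatim from \cite[Theorem~3.7, Prop.~4.8]{GKS2012} and used as a black box, so there is no ``paper's own proof'' to compare your proposal against.

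That said, your sketch is a reasonable outline of the argument in \cite{GKS2012}, with one difference worth flagging. Your uniqueness and monotonicity arguments are essentially those of GKS: invertibility of $K(\phi)$ under fibrewise convolution, reduction of a quantization of the identity to a locally constant sheaf via the conormal microsupport condition and connectedness of $I$, and the factorization $K(\phi_H)\simeq K(\psi)\star K(\phi_{H'})$ with $\psi$ generated by $G=H-H'\!\circ\phi\geq 0$. For existence, however, GKS do \emph{not} build $K$ from an explicit generating family and then glue; rather, they prove directly that for sheaves $K$ on $I\times M^2$ with $\dot{SS}(K)\subset\Lambda_\phi$ the restriction $K\mapsto K|_{\{z_0\}\times M^2}$ is an equivalence (this is their key Proposition~3.12, proved by a microsupport/non-characteristic-deformation argument), and existence then falls out by inverting the restriction at $z=0$ applied to $1_{\Delta_M}$. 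Your generating-family construction is a legitimate alternative (closer in spirit to Guillermou's later expositions), but the ``hard part'' you isolate --- producing a conic $S$ globally on $\dot T^*M$ and controlling corners --- is precisely what GKS avoid by working abstractly. If you want to match the cited source, replace your short-time construction by the equivalence-of-restriction statement; the concatenation step then becomes unnecessary as well.
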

\begin{RMK}\label{RMK: GKS over general base}\begin{enumerate}[fullwidth]
\item In the proof of \autoref{theorem: GKS}, requirements for $\bfk$ contributes to different part of the proof. 

As we explained in the beginning \autoref{section: sheaf theory review}, we need local rigidity if we want to use the Verdier dual. It basically shows up everywhere in the proof.

However, it is not completely clear that if the compact generation is essential. The main role of compact generation here is to guarantee that we can use arguments in \cite{KS90} to prove the following  type of microsupport estimations: Box product, pullback and pushforward. The proofs of the box product estimation and pullback by submersion estimation in \cite[Proposition 5.4.1, 5.4.5]{KS90} are essentially using the $\Omega$-lens definition; it is directly to prove the pushforward estimation \cite[Proposition 5.4.4]{KS90} using the $\Omega$-lens definition; but we do not know if pullback by embedding can be proven using the $\Omega$-lens definition directly. Upon one can do this, the GKS theorem is true in the case that $\bfk$ is locally rigid but not necessarily compactly generated. 

\item The existence does not use the linear cut-off lemmas. The continuation map can be derived directly from the \eeqref{equation: graph} and \autoref{prop: continuation map using from lens}, which is a version of 1-dimensional linear cut-off lemma, upon we have the existence. 
\end{enumerate}
    
\end{RMK}

Motivated by ideas of \cite{Nadler-pants, GPS3}, it was shown in \cite{kuo2021wrapped} that for any closed set $ Z^\infty \subset S^*M$ and the conic closed set $Z= \bR_{>0} Z^\infty\cup 0_M \subset T^*M$, the left and right adjoint inclusion $\iota: \Sh_Z(M) \rightarrow \Sh(M)$ can be computed `by wrapping'. More precisely, 
 \begin{Thm}[{\cite[Theorem 1.2]{kuo2021wrapped}}] \label{chris thesis}
If $H_n$\footnote{Here, we do not distinguish a function on $S^*M$ and its conic lifting on $\dot{T}^*M$.} is any increasing sequence of positive compactly supported Hamiltonians supported on $S^*M \setminus Z^\infty$ such that $H_n \uparrow \infty$ pointwise in $S^*M \setminus Z^\infty$, then the adjoints of $\iota$ could be computed by
\begin{align}
\iota^* F = \varinjlim \Phi_{K(\phi_{H_n})|_1 }( F), \quad  \iota^! F = \varprojlim \Phi_{K(\phi_{-H_n})|_1 }( F).
\end{align} 

Moreover, the continuation maps $F \rightarrow \Phi_{K(\phi_{H_n})|_1 }( F)$ (resp. $  \Phi_{K(\phi_{-H_n})|_1 }( F) \rightarrow F$) induced by $1_{\Delta_M} \to  K(\phi_{H_n})|_1$ (resp. $  K(\phi_{-H_n})|_1 \to 1_{\Delta_M} $ ) from positivity of $H_n$ give the unit (resp. counit) after taking colimit (resp. limit).\end{Thm}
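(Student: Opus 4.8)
The plan is to show that the functor $WF\coloneqq\varinjlim_n\Phi_{K(\phi_{H_n})|_1}(F)$, equipped with the canonical map $\eta_F:F\to WF$ assembled from the continuation maps $1_{\Delta_M}\to K(\phi_{H_n})|_1$ of \autoref{theorem: GKS}, realizes the reflector $\iota\iota^*$ of the reflective inclusion $\iota:\Sh_Z(M)\to\Sh(M)$ of \autoref{prop: refletive}, with $\eta$ its unit; the formula for $\iota^!$ then follows from the dual construction (reverse isotopies, $\varprojlim$ in place of $\varinjlim$) or formally by passing to right adjoints. Observe that $W$ preserves colimits, being a filtered colimit of the colimit-preserving convolution functors $\Phi_{K(\phi_{H_n})|_1}$. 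By the Yoneda lemma inside $\Sh_Z(M)$ it then suffices to prove: (i) $WF\in\Sh_Z(M)$, i.e.\ $SS(WF)\subset Z$; and (ii) for every $G\in\Sh_Z(M)$ the map $\eta_F$ induces an equivalence $\mathrm{Map}(WF,G)\simeq\mathrm{Map}(F,G)$.

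For (ii) the engine is a triviality lemma I would isolate first: \emph{if $\psi$ is a $C^\infty$ conic Hamiltonian isotopy whose Hamiltonian is supported on $S^*M\setminus Z^\infty$ — so $\psi_z$ is the identity on a conic neighborhood of $\dot{Z}$ — then $\Phi_{K(\psi)|_1}$ restricts to the identity functor on $\Sh_Z(M)$.} To prove it, for $G\in\Sh_Z(M)$ I would form the interpolation sheaf $\mathcal{G}\in\Sh(I\times M)$ obtained by convolving $K(\psi)$ with $G$ over the middle factor, so $\mathcal{G}|_{\{z\}\times M}\simeq\Phi_{K(\psi)|_z}(G)$; by \autoref{theorem: GKS} and the convolution bound $\dot{SS}(\mathcal{G})$ lies in the composition of $\Lambda_\psi$ with $\dot{SS}(G)\subset\dot{Z}$, and since the Hamiltonian vanishes near $\dot{Z}$ the $T^*I$-component of $\Lambda_\psi$ over $\dot{Z}$ vanishes, so $\dot{SS}(\mathcal{G})$ has vanishing $T^*I$-component. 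Hence $\mathcal{G}\simeq\mathrm{pr}_M^{-1}(\mathcal{G}|_{\{0\}\times M})=\mathrm{pr}_M^{-1}G$ (microsupport in the base codirections forces pull-back from the base, via the non-characteristic deformation lemma \cite[Proposition 2.7.2]{KS90}), in particular $\Phi_{K(\psi)|_z}(G)\simeq G$ for all $z$. Granting this, each $\Phi_{K(\phi_{H_n})|_1}$ is an auto-equivalence of $\Sh(M)$ (\autoref{theorem: GKS}) whose right adjoint is the convolution by the GKS kernel of the reverse isotopy, again supported off $Z^\infty$; so $\mathrm{Map}(\Phi_{K(\phi_{H_n})|_1}F,G)\simeq\mathrm{Map}(F,\Phi_{K(\phi_{H_n}^{-1})|_1}G)\simeq\mathrm{Map}(F,G)$ compatibly in $n$, and passing to the limit gives (ii) with the comparison visibly induced by $\eta_F$.

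For (i) I would first reduce to $F=1_B\otimes\bfk'$ with $B$ a smooth open ball and $\bfk'\in\bfk^c$: $W$ preserves colimits and $\Sh_Z(M)$ is closed under colimits (\autoref{prop: refletive}), so it is enough to land such generators in $\Sh_Z(M)$, and for them $\dot{SS}(F)$ is a smooth, conic, nowhere dense subset of $\dot{T}^*M$. Next, a cofinality argument — inserting one further fixed bounded positive wrapping into the tail of $\{\phi_{H_n}\}$ does not change the colimit — shows that $WF$ is fixed by convolution with the GKS kernel of \emph{every} positive compactly supported Hamiltonian supported on $S^*M\setminus Z^\infty$; by \autoref{theorem: GKS} and \autoref{prop: convolution estimation in general} this makes $\dot{SS}(WF)$ invariant under all such time-one flows, hence under the group they generate, which acts transitively on each connected component of $\dot{T}^*M\setminus\dot{Z}$. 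A conic closed set with this invariance meets each such component either trivially or fully; the ``fully'' alternative would make $\dot{SS}(WF)$ contain an open set, which is ruled out since \autoref{coro: colimit estimation} bounds $\dot{SS}(WF)$ by $\liminf_n\phi^1_{H_n}(\dot{SS}(F))$, and this $\liminf$ has empty interior — the $\phi^1_{H_n}$ distort volume in a controlled way, so a fixed-volume open set cannot be compressed into the shrinking neighborhood of the measure-zero set $\dot{SS}(F)$. Therefore $\dot{SS}(WF)\cap(\dot{T}^*M\setminus\dot{Z})=\varnothing$, i.e.\ $SS(WF)\subset Z$; unwinding \autoref{theorem: GKS} finally identifies $\eta_F$ with the localization unit.

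The hard part will be step (i). The crude estimate $SS(WF)\subset\liminf_n\phi^1_{H_n}(\dot{SS}(F))\cup 0_M$ coming straight from \autoref{coro: colimit estimation} is \emph{not} by itself contained in $Z$ — the wrapping diffeomorphisms do not shrink microsupport pointwise, and membership in $Z$ arises only through cancellation in the colimit. Extracting it genuinely requires combining the idempotency/cofinality of the wrapping colimit with the symplectic displacement dichotomy above, which is precisely where input from symplectic topology enters; this is the content of \cite[Theorem 1.2]{kuo2021wrapped}. Step (ii), by contrast, should be a routine application of the non-characteristic deformation lemma once the microsupport of the interpolation sheaf has been identified.
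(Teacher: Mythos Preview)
This theorem is quoted from \cite{kuo2021wrapped} and the paper does not supply its own proof, so there is no in-paper argument to compare against. Your two-step strategy --- verify that $W$ together with $\eta$ satisfies the reflector axioms --- is the right shape, and step (ii) via the triviality lemma is correct; essentially the same computation reappears later in the paper, in the proof of the first K\"unneth theorem.

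Step (i), however, has a genuine gap. The invariance-plus-dichotomy reduction is fine: once $WF\simeq\Phi_{K(\psi)|_1}(WF)$ for every positive compactly supported wrapping $\psi$ off $Z^\infty$, the set $\dot{SS}(WF)\cap\dot U$ is invariant under a group acting transitively on each component of $\dot U$, hence is a union of such components. But the ``volume argument'' you use to exclude the full-component case does not work. On $S^*M$ the relevant maps are contactomorphisms, which are \emph{not} volume-preserving (they rescale the contact volume by the $n$th power of the conformal factor), while on $\dot T^*M$ conic open sets have infinite symplectic volume, so in neither picture is there a finite invariant volume to compare. More decisively, there is no obstruction whatsoever to $\liminf_n\phi^1_{H_n}(L)$ containing an open set when $L$ is a compact Legendrian: take $M$ a closed hyperbolic manifold, so the Reeb (geodesic) flow on $S^*M$ is mixing; for $H_n$ approximating $n$ times a cutoff of the constant function $1$, the images $\phi^1_{H_n}(L)$ equidistribute and the $\liminf$ is all of $S^*M\setminus Z^\infty$. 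Your own phrasing --- ``a fixed-volume open set cannot be compressed into the shrinking neighborhood of the measure-zero set'' --- tacitly assumes that $O\subset\liminf_n\phi_n(L)$ forces $\phi_n^{-1}(O)$ to lie near $L$, which is simply false: different points of $O$ may be approximated through entirely different pieces of $L$. So the bound from \autoref{coro: colimit estimation} is genuinely too coarse to see the cancellation in the colimit that forces $SS(WF)\subset Z$. You correctly flag this as the place where symplectic input enters, but the mechanism in \cite{kuo2021wrapped} is categorical rather than measure-theoretic: one uses that the microstalk functors at points of $\dot U$ are corepresented by \emph{compact} objects, commutes them past the filtered colimit, and then exploits cofinality in the full wrapping $\infty$-category (not just a single sequence $H_n$) together with a displacement argument to show the resulting directed system of Hom-spaces has vanishing colimit.
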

\begin{RMK}
\begin{enumerate}[fullwidth] 
\item In fact, as explained in \cite[Remark 6.5]{Hochschild-Kuo-Shende-Zhang}, the colimit in \cite{kuo2021wrapped} is taken over an $\infty$-categorical `wrapping category', but one can compute the colimit by a cofinal sequence as explained in \cite[Lemma 3.31]{kuo2021wrapped}.

\item As we remark in \autoref{RMK: GKS over general base}, we may think the wrapping formula as a cut-off result derived from the 1-dimensional linear cut-off lemma.
\item In \cite{kuo2021wrapped}, the author require $\bfk$ to be compactly generated and rigid to ensure the present proofs for the existence and uniqueness parts of GKS theorem work. We have constructed the continuation map using the $\Omega$-lens definition, which only requires that $\bfk$ to be dualizable.

Upon we have the full GKS theorem, even the argument of \cite[Theorem 1.2]{kuo2021wrapped} was written using \cite[Definition 5.1.2]{KS90}, but can be written in terms of the $\Omega$-lens. 
   
\item It is clear that we also have $\iota^! F= \varprojlim \Psi_{K(\phi_{H_n})|_1 }( F)$ via nvolution functors. The formula coincides with the above formula using limit of convolutions since $\Psi_{K(\phi_{H_n})|_1 }\simeq \Phi_{K(\phi_{-H_n})|_1 }$ by \cite[Proposition 3.2(ii)]{GKS2012}.
\end{enumerate}

\end{RMK}
 
For $H\geq 0$, We set $K^\circ(\phi_{H_n})= \cofib{1_{I\times \Delta_M} \to  K(\phi_{H_n})}.   $

Compose with the natural inclusion $\iota$ and combine with \cite[Proposition 3.5]{kuo2021wrapped}, one can see that the fiber sequence \eqref{equation: fiber sequence for kernel} can be taken as
\begin{equation}\label{equation: wrapping formula}
   K_U=\varinjlim (K^\circ(\phi_{H_n})|_1 ) \rightarrow 1_{\Delta_M}  \rightarrow  K_Z=\varinjlim (K(\phi_{H_n})|_1 ).   
\end{equation}
 
To complete the proof of \autoref{prop: non-linear microlocal cut-off lemma} (2)-(b), we prove the following microsupport estimation of $K^\circ(\phi_{H})|_1$. We also notice that the requirement of $0_M\subset Z$ comes from the wrapping formula.
\begin{Prop}\label{prop: microsupport estimation of kernel}For a Hamiltonian function $H$ supported in $U^\infty=U/\bR_{>0}$, we have 
\[\dot{SS}(K^\circ(\phi_{H})|_1)\subset (-U)\times U .\]
In particular, we have
\[\dot{SS}(K_U)\subset (-\overline{U})\times \overline{U} .\]
\end{Prop}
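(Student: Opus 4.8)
The plan is to analyze the microsupport of $K^\circ(\phi_H)|_1$ directly from the Guillermou--Kashiwara--Schapira quantization, using the explicit description of the Lagrangian movie $\Lambda_\phi$ in \eqref{equation: totalgraph}. First I would record that $\dot{SS}(K(\phi_H)) \subset \Lambda_\phi \subset T^*I \times \dot T^*(M\times M)$ by \autoref{theorem: GKS}, and that restriction to $\{1\}\times M^2$ only enlarges microsupport in a controlled way: by the microsupport estimate for (the non-characteristic) restriction, $\dot{SS}(K(\phi_H)|_1)$ is contained in the symplectic reduction of $\Lambda_\phi$ along $\{z=1\}$, which is exactly $\Lambda_{\phi_1}$. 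So the core task is purely geometric: understand $\Lambda_{\phi_1} = \{((q,-p),\phi_1(q,p)) : (q,p)\in\dot T^*M\}$ and the cofiber sheaf's microsupport, which by the triangulated inequality applied to $1_{I\times\Delta_M}\to K(\phi_H)\to K^\circ(\phi_H)$ sits inside $\dot{SS}(K(\phi_H)) \cup \dot{SS}(1_{I\times\Delta_M}) \subset \Lambda_\phi \cup (0_I\times\Delta_M\text{-conormal})$, and after restriction inside $\Lambda_{\phi_1}\cup\dot\Delta_{T^*M}^a$ (the conormal to the diagonal, i.e. the graph of the identity).

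The key point is that $H$ is supported on $U^\infty = U/\bR_{>0}$, i.e. $H$ vanishes on $Z^\infty$; hence the Hamiltonian flow $\phi_z$ is the identity on all of $\dot Z = Z\cap\dot T^*M$ for every $z$. Therefore on the locus $\{(q,p)\in \dot Z\}$ the graph $\Lambda_{\phi_1}$ coincides with the conormal of the diagonal $\dot\Delta^a_{T^*M} = \{((q,-p),(q,p))\}$. The cofiber $K^\circ(\phi_H)|_1$ is designed precisely to kill the diagonal part: away from $\dot\Delta^a_{T^*M}$ its microsupport lives in $\Lambda_{\phi_1}$, and I would argue that over the closed locus where $\phi_1$ acts trivially the map $1_{\Delta_M}\to K(\phi_H)|_1$ is an isomorphism, so $\dot{SS}(K^\circ(\phi_H)|_1)$ avoids that locus entirely. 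Concretely: if $(q,p)\in \dot Z$, then a neighborhood argument (stability of $\phi$ near the closed set where $H\equiv 0$, together with the GKS continuation map being the identity there) shows $K^\circ(\phi_H)|_1$ vanishes microlocally at $((q,-p),(q,p))$. Combined with the inclusion into $\Lambda_{\phi_1}$ coming from GKS, and the fact that $\Lambda_{\phi_1}$ maps $\dot T^*M$ bijectively to $\dot T^*M$ via $\phi_1$, we get: the first factor is $(q,-p)$ with $(q,p)\notin \dot Z$, so $(q,-p)\in -U$; and the second factor is $\phi_1(q,p)$, which, since $\phi_1$ preserves $\dot Z$ setwise (it fixes it pointwise) and is a bijection, also satisfies $\phi_1(q,p)\in \dot U = U\cap\dot T^*M$. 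Hence $\dot{SS}(K^\circ(\phi_H)|_1)\subset (-U)\times U$.

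For the last assertion, I would pass to the colimit. By the wrapping formula \eqref{equation: wrapping formula}, $K_U = \varinjlim_n K^\circ(\phi_{H_n})|_1$, so by \autoref{coro: colimit estimation} (or just \autoref{closure property of microsupport} applied to the telescope as in \autoref{remark: colimit using telescope}) we have $\dot{SS}(K_U) \subset \overline{\bigcup_n \dot{SS}(K^\circ(\phi_{H_n})|_1)} \subset \overline{(-U)\times U}$. Finally $\overline{(-U)\times U} \subset (-\overline U)\times\overline U$ inside $\dot T^*(M\times M)$, and intersecting with $\dot T^*(M\times M)$ keeps us away from the zero section, so $\dot{SS}(K_U)\subset (-\overline U)\times\overline U$ as claimed; adding back the zero section (which is all that $SS$ can contain over $0_{M^2}$) gives the form $SS(K_U)\subset (-\overline U)\times\overline U \cup 0_{M\times M}$ used in the proof of \autoref{prop: non-linear microlocal cut-off lemma}.

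I expect the main obstacle to be the microlocal vanishing statement over $\dot Z$, i.e. rigorously showing $1_{\Delta_M}\xrightarrow{\sim} K(\phi_H)|_1$ microlocally along $\dot\Delta^a_{T^*M}$ over the locus where $H$ vanishes. The subtlety is that $H$ vanishing on $Z^\infty$ does not make $\phi$ locally constant near an interior point of $Z^\infty$ unless $Z^\infty$ has nonempty interior; one must instead use that the GKS kernel restricted to the region where the isotopy is trivial agrees canonically with $1_{I\times\Delta_M}$ via the uniqueness part of \autoref{theorem: GKS} and that the continuation map from positivity is compatible with this identification — so $K^\circ$ restricted there is zero. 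Care is also needed with the restriction-to-$\{z=1\}$ microsupport bound, which is only the containment in the symplectic reduction and may a priori pick up extra points if $\Lambda_\phi$ is not transverse to $\{z=1\}$; but since $\phi$ is a genuine isotopy the transversality is automatic, so this reduces to a standard citation of the non-characteristic restriction estimate from \cite{KS90}.
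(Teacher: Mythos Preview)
Your approach is essentially the paper's: bound $\dot{SS}(K^\circ(\phi_H)|_1)\subset\Lambda_{\phi_{H,1}}\cup\Lambda_{\id}$ via the triangulated inequality, kill the diagonal contribution where the flow is trivial, observe that what remains lands in $(-U)\times U$, and then pass to the colimit with \autoref{coro: colimit estimation} for $K_U$.

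The one place you diverge is your ``main obstacle,'' and here you are making life harder than necessary. You worry that $H$ vanishing only on the \emph{closed} set $Z^\infty$ is too weak to force microlocal triviality of the cofiber there, and propose a workaround via GKS uniqueness. But ``$H$ supported in $U^\infty$'' means $\operatorname{supp}(H)$ is a closed subset of $S^*M$ contained in the \emph{open} set $U^\infty$; hence $H$ vanishes identically on the open set $W^\infty\coloneqq S^*M\setminus\operatorname{supp}(H)\supset Z^\infty$ (the paper also uses that $H$ is compactly supported, so $W^\infty$ is even a neighborhood of infinity). The paper exploits exactly this open $W$: on the conic open $W\supset Z$ the isotopy is literally the identity, so the continuation map $1_{\Delta_M}\to K(\phi_H)|_1$ induces the identity on microstalks at every diagonal point $((q,-p),(q,p))$ with $(q,p)\in W$. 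Therefore $K^\circ(\phi_H)|_1$ has vanishing microstalks over that open locus, and one concludes directly
\[
\dot{SS}(K^\circ(\phi_H)|_1)\subset(-W^c)\times W^c\subset(-U)\times U.
\]
Your GKS-uniqueness route would also justify the microstalk identification, but it is not needed once you use the open $W$ instead of the closed $Z$.
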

\begin{proof}
By the triangle inequality, we have that
\[\dot{SS}(K^\circ(\phi_{H})|_1)\subset  \Lambda_{{\phi}_{H,1}} \cup  \Lambda_{\id} . \]
Since $\phi_{H,1}$ is compactly supported, there exists a maximal conic open set $W\subset T^*M$ at infinity such that $T^*M\setminus U \subset W$ such that $H|_W=0$. Therefore, we have
\[\dot{SS}(K^\circ(\phi_{H})|_1)\setminus {(-W^c)\times W^c} \subset \lbrace (q,-p,q,p): (q,p)\in W\rbrace.\]

Now, we want to prove that the right hand side of the above does not in $\dot{SS}(K^\circ(\phi_{H})|_1)$. To see this, we notice that $K^\circ(\phi_{H})|_1$ acts as a quantized contact transform in the sense of \cite[Theorem 7.2.1]{KS90}. In particular, the action of $K^\circ(\phi_{H})|_1$ on microstalk can can be given by geometric action given by $(\phi_{H}^1)_*$ on sheaves on cotangent bundle. Therefore, as $H|_W=0$, the monotonicity morphism $1_{\Delta_M}\rightarrow K(\phi_{H})|_1$ induces the identity map on microstalks at $(q,-p,q,p)$ for $(q,p)\in W$.

Then we have that the microstalk of $ K^\circ(\phi_{H})|_1$ at the $(q,-p,q,p)$ is zero for $(q,p)\in W$. This implies that
\[\dot{SS}(K^\circ(\phi_{H})|_1 ) \subset (-W^c)\times W^c \subset (-U)\times U.\]

The second statement follows from \autoref{coro: colimit estimation} and the wrapping formula \eqref{equation: wrapping formula}.
\end{proof}

Using the wrapping formula, we can also prove the following microsupport estimation
\begin{Prop}\label{prop: precise estimation for L_Z}If $Z$ contains the zero section, then for any increasing sequence of positive compactly supported Hamiltonians $H_n$ supported on $S^*M \setminus Z^\infty$ such that $H_n \uparrow \infty$ pointwise in $S^*M \setminus Z^\infty$, we have
\begin{align*}
  & {SS}(L_Z(F)) \subset \lbrace x : \forall n\in \bN,\,\exists {x_n} \in \dot{SS}(F)\text{, such that }\phi_{H_n,1}(x_n)\rightarrow x\rbrace \cup 0_M,\\
  & {SS}(R_Z(F)) \subset \lbrace x : \forall n\in \bN,\,\exists {x_n} \in \dot{SS}(F)\text{, such that }\phi_{-H_n,1}(x_n)\rightarrow x\rbrace \cup 0_M.
\end{align*}
\end{Prop}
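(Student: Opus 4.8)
The plan is to combine the wrapping formula \eqref{equation: wrapping formula} with the colimit microsupport estimate \autoref{coro: colimit estimation}, just as in the second half of the proof of \autoref{prop: microsupport estimation of kernel}. Namely, $L_Z(F) = \Phi_{K_Z}(F)$ where $K_Z = \varinjlim K(\phi_{H_n})|_1$, and since $\Phi_{(-)}(F)$ is a left adjoint hence commutes with colimits, we have $L_Z(F) = \varinjlim \Phi_{K(\phi_{H_n})|_1}(F)$. By \autoref{coro: colimit estimation}, $SS(L_Z(F)) \subset \liminf_n SS(\Phi_{K(\phi_{H_n})|_1}(F))$, so it suffices to control each $SS(\Phi_{K(\phi_{H_n})|_1}(F))$ individually and then take the $\liminf$.

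For a single term, the point is that $K(\phi_{H_n})|_1$ has $\dot{SS} \subset \Lambda_{\phi_{H_n,1}}$ by \autoref{theorem: GKS}, which is the twisted graph of the contactomorphism $\phi_{H_n,1}$. The composition estimate for convolution kernels (the geometric half of \autoref{prop: convolution estimation in general}, or more precisely the microsupport-of-composition bound from \cite{KS90} or \cite{GKS2012}) then gives $\dot{SS}(\Phi_{K(\phi_{H_n})|_1}(F)) \subset \phi_{H_n,1}(\dot{SS}(F))$ — the GKS sheaf quantization transports microsupport by the Hamiltonian flow. (One must be slightly careful that composing a closed conic set with $\Lambda_{\phi_{H_n,1}}$ introduces no boundary terms at infinity; since $\phi_{H_n,1}$ is a homogeneous contactomorphism of $\dot{T}^*M$ this is fine, and the possible zero-section contribution is absorbed into the $\cup\, 0_M$.) Taking $\liminf_n$ of the sets $\phi_{H_n,1}(\dot{SS}(F))$ and unwinding the definition of $\liminf$ from the excerpt yields exactly $\{x : \forall n, \exists x_n \in \dot{SS}(F), \phi_{H_n,1}(x_n) \to x\}$, once one observes that for $x_n \in \dot{SS}(F)$ the point $\phi_{H_n,1}(x_n)$ is the generic element of the $n$-th set. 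The $R_Z$ statement is identical using $\iota^! F = \varprojlim \Phi_{K(\phi_{-H_n})|_1}(F)$ together with \autoref{closure property of microsupport} applied to the telescope/limit, exactly paralleling the $\limsup$-then-$\liminf$ bookkeeping in the proof of \autoref{coro: colimit estimation}.

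The main obstacle I expect is making the per-term estimate $\dot{SS}(\Phi_{K(\phi_{H_n})|_1}(F)) \subset \phi_{H_n,1}(\dot{SS}(F))$ genuinely clean, including the behavior near the zero section and the fact that $\liminf$ of the flowed sets is what one wants rather than something larger. For the $R_Z$ case there is the additional subtlety that $\iota^!$ is a limit, so one needs the product/inverse-limit microsupport bound; the telescope remark \autoref{remark: colimit using telescope} dualizes, but one should state it carefully. A secondary point is that the proof should note the estimate is independent of the chosen cofinal sequence $H_n$ (any such sequence computes the same $L_Z$, $R_Z$ by \autoref{chris thesis}), so the displayed right-hand sides, a priori depending on $H_n$, all contain $SS(L_Z(F))$ resp. $SS(R_Z(F))$.
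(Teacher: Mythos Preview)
Your approach is essentially the paper's: for $L_Z$, apply \autoref{coro: colimit estimation} to the wrapping formula together with the per-term estimate $\dot{SS}(\Phi_{K(\phi_{H_n})|_1}(F)) = \phi_{H_n,1}(\dot{SS}(F))$ (this is stated in the paper as \eeqref{equation: GKS action estimation.} and is in fact an equality, so your worries about boundary terms are unfounded); then unwind the definition of $\liminf$.

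One organizational point about the $R_Z$ case deserves emphasis. You call the fact that any subsequence $\{H_{n_i}\}$ still computes $R_Z$ via \autoref{chris thesis} a ``secondary point'', but in the paper's argument this is precisely the \emph{key} input, not an afterthought. The issue is that the proof of \autoref{coro: colimit estimation} upgrades $\limsup$ to $\liminf$ by using cofinality of subsequences in $\bN$ to identify $\varinjlim_i F_{n_i}$ with $\varinjlim_n F_n$; this cofinality argument has no formal dual for $\varprojlim$ over $\bN^{op}$, since a strictly increasing subsequence is not initial. What saves the day is exactly the wrapping formula: because any $\{H_{n_i}\}$ still tends to $\infty$ on $S^*M\setminus Z^\infty$, \autoref{chris thesis} gives $R_Z(F)\simeq \varprojlim_i \Phi_{K(\phi_{-H_{n_i}})|_1}(F)$ for every strictly increasing sequence, and then the $\limsup$-to-$\liminf$ step goes through verbatim. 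So your ``telescope dualizes'' remark gets you only to $\limsup$, and the upgrade to $\liminf$ genuinely requires the observation you relegated to the end.
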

\begin{proof} By \cite[Equation (1.12)]{GKS2012}, we have the microsupport estimation \cite[Equation (4.4)]{GKS2012}:
\begin{equation}\label{equation: GKS action estimation.}
\dot{SS}( K({\phi})|_{z} \circ F )= \phi_z(\dot{SS}(F)) .\end{equation}

For $L_Z$, the estimation follows immediately from \autoref{coro: colimit estimation} and \eeqref{equation: GKS action estimation.} using the wrapping formula \autoref{chris thesis}. 

For the statement for $R_Z$, we can not use \autoref{coro: colimit estimation} directly. Notice that the proof of \autoref{coro: colimit estimation} only need the fact: By abstract reasons, for all strictly increasing sequences $\{n_i\}_{i\in \bN} $, we have $\varinjlim_i F_{n_i} \simeq \varinjlim_n F_{n}$. In general, for sequences $\{n_i\}_{i\in \bN} $ as before, we do not have $\varprojlim_i F_{n_i}  \not\simeq \varprojlim_n F_{n}$ by abstract reasons. Nevertheless, for the given $H_n$'s, functions $\{H_{n_i}\}_{i\in \bN}$ still go to infinity on $S^*M \setminus Z^\infty$ for any strictly increasing subsequence $\{n_i\}_{i\in \bN} $. Therefore, \autoref{chris thesis} implies, for all strictly increasing sequences $\{n_i\}_{i\in \bN} $, that
\[R_Z(F)\simeq \varprojlim_i  \Phi_{K(\phi_{-H_{n_i}})|_1 }( F).\]
Then we can adapt the argument of \autoref{coro: colimit estimation} for the specific limit to show that
\[{SS}(R_Z(F)) \subset \liminf_n SS(\Phi_{K(\phi_{-H_n})|_1 }( F)).\]
The required microsupport estimation follows from \eeqref{equation: GKS action estimation.}. 
\end{proof}
\begin{RMK}Suppose we did not know the formula for the adjoint functor $L_Z(F)$. The proposition still shows a singular support estimation on $\varinjlim \Phi_{K(\phi_{H_n})|_1 }( F)$. It is possible to use this singular support estimation to show that $\varinjlim \Phi_{K(\phi_{H_n})|_1 }( F)\subset Z$. Subsequently, one can conclude $\varinjlim \Phi_{K(\phi_{H_n})|_1 } $ is indeed the left adjoint $L_Z$. This offers us a new proof of the wrapping formula. We leave the details to the readers. 
\end{RMK}
Last, we write down the following limit formula.
\begin{Prop}For an increasing sequence of conic open sets $U_n$ with $U=\cup_n U_n$. We set $Z_n=T^*M\setminus U_n$. Then there exists a colimit of fiber sequences in $\Sh(M\times M)$:
\[K_U\rightarrow 1_{\Delta_M}\rightarrow K_Z \simeq \varinjlim [K_{U_n}\rightarrow 1_{\Delta_M}\rightarrow K_{Z_n}].\]\    
\end{Prop}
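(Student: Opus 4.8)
The plan is to identify each piece of the fiber sequence \eqref{equation: fiber sequence for kernel} for $U$ as a colimit over $n$ of the corresponding pieces for $U_n$, compatibly with the maps, and then observe that a colimit of fiber sequences is a fiber sequence. First I would use the wrapping formula \eqref{equation: wrapping formula}: fix, for each $n$, an increasing sequence of positive compactly supported Hamiltonians $H_{n,m}$ supported on $S^*M\setminus Z_n^\infty = U_n^\infty$ with $H_{n,m}\uparrow\infty$ pointwise on $U_n^\infty$ as $m\to\infty$. Since $U_n^\infty\subset U^\infty$, each $H_{n,m}$ is also supported on $U^\infty$, and by passing to a diagonal/cofinal reindexing (using that $U=\bigcup_n U_n$, so any point of $U^\infty$ eventually lies in $U_n^\infty$ and the $H_{n,m}$ exhaust it) one gets a single increasing sequence of positive compactly supported Hamiltonians supported on $U^\infty$ that $\uparrow\infty$ pointwise on $U^\infty$. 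Then $K_{Z_n} = \varinjlim_m K(\phi_{H_{n,m}})|_1$ and $K_Z = \varinjlim$ over the diagonal system, and since the diagonal is cofinal the iterated colimit $\varinjlim_n \varinjlim_m K(\phi_{H_{n,m}})|_1 = \varinjlim_n K_{Z_n}$ agrees with $K_Z$. The analogous statement for $K_{U_n}$ follows the same way using $K^\circ(\phi_{H})$, and the monotonicity maps $1_{I\times\Delta_M}\to K(\phi_{H_{n,m}})\to K(\phi_{H_{n',m'}})$ (for $H_{n,m}\le H_{n',m'}$) supply all the transition maps and the compatibility of the three terms.

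An alternative, perhaps cleaner, route avoids Hamiltonians entirely and works at the level of categories: one shows $\Sh_Z(M) = \varinjlim_n \Sh_{Z_n}(M)$ as a colimit in $\PrSt$ (equivalently, as an increasing union of reflective subcategories, since $SS(F)\subset Z = \bigcap_n Z_n$ iff $SS(F)\subset Z_n$ for all $n$ — wait, that gives an intersection, so one must instead observe $\Sh_Z(M)=\bigcap_n \Sh_{Z_n}(M)$ is a limit, and dually $\Sh(M;U) = \Sh(M)/\Sh_Z(M)$). Actually the colimit presentation should be read on the quotient side: $\Sh(M;U_n)$ receive compatible functors and $\Sh(M;U) \simeq \varinjlim_n \Sh(M;U_n)$, whence the idempotents $L_{U_n}, L_{Z_n}$ and their kernels $K_{U_n}, K_{Z_n}$ converge to $L_U, L_Z, K_U, K_Z$ under $\Sh(M\times M)\simeq\Fun^L(\Sh(M),\Sh(M))$. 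However, since the paper has already established the wrapping formula and \autoref{coro: colimit estimation}, the Hamiltonian-based argument is the one most consonant with the surrounding text, and I would present that.

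The key steps, in order: (i) set up the doubly-indexed Hamiltonian system $\{H_{n,m}\}$ and verify that a diagonal subsequence is cofinal in the wrapping category for $Z$ — this uses $U=\bigcup_n U_n$ in an essential way; (ii) invoke \autoref{chris thesis}/\eqref{equation: wrapping formula} to write $K_{Z_n} = \varinjlim_m K(\phi_{H_{n,m}})|_1$, $K_{U_n} = \varinjlim_m K^\circ(\phi_{H_{n,m}})|_1$, and $1_{\Delta_M}$ for the middle term; (iii) commute the colimits (Fubini for colimits in a cocomplete category) to get $\varinjlim_n K_{Z_n}\simeq K_Z$ and $\varinjlim_n K_{U_n}\simeq K_U$; (iv) note the monotonicity maps make $[K_{U_n}\to 1_{\Delta_M}\to K_{Z_n}]$ a diagram of fiber sequences indexed by $n$, and that the transition map $1_{\Delta_M}\to 1_{\Delta_M}$ is the identity, so the colimit of the middle terms is $1_{\Delta_M}$; (v) conclude using that colimits in a stable category are exact, so $\varinjlim_n [K_{U_n}\to 1_{\Delta_M}\to K_{Z_n}]$ is a fiber sequence, and it must coincide with $K_U\to 1_{\Delta_M}\to K_Z$ by uniqueness of the fiber sequence \eqref{equation: fiber sequence for kernel}.

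The main obstacle is step (i): making precise the cofinality of a diagonal subsequence of $\{H_{n,m}\}$ inside the wrapping data for $Z$, i.e. checking that one really can produce a single increasing sequence of positive compactly supported Hamiltonians supported on $U^\infty$ that exhausts $U^\infty$ pointwise and is simultaneously cofinal among (or at least mutually exhausting with) the families $\{H_{n,m}\}_m$ for every $n$. This is where the open-cover hypothesis $U=\bigcup_n U_n$ does the real work: a compactly supported Hamiltonian on $U^\infty$ has support inside some $U_n^\infty$ by compactness, which lets the diagonal reindexing go through. Everything after that is formal manipulation of colimits in $\Sh(M\times M)$ together with results already in the paper.
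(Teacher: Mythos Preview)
Your Hamiltonian approach works, modulo one caveat: the wrapping formula \autoref{chris thesis} is stated for $Z=\bR_{>0}Z^\infty\cup 0_M$, so it presupposes $0_M\subset Z_n$ for every $n$, a hypothesis the proposition does not impose. Under that extra assumption your steps (i)--(v) go through; the doubly-indexed family $\{H_{n,m}\}$ can be built inductively to increase in both indices, and the diagonal is then a legitimate wrapping sequence for $U$ by the compactness observation you give.

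The paper, however, takes precisely the route you sketched and then set aside. It does not invoke the wrapping formula here. Instead, the inclusions $\Sh_{Z_{n+1}}(M)\subset\Sh_{Z_n}(M)$ directly furnish a homotopy-coherent tower $L_{Z_1}\to L_{Z_2}\to\cdots$ of left adjoints, hence via \autoref{Kernel-functor corresponding} a tower of kernels $K_{Z_n}$ under $1_{\Delta_M}$. The identification of the colimit with $K_Z$ is then the single analytic step, and it uses only \autoref{coro: colimit estimation}: each $L_{Z_n}$ fixes $\Sh_Z(M)$, so the colimit does too, while $SS\bigl(\varinjlim_n L_{Z_n}F\bigr)\subset\liminf_n Z_n=\bigcap_n Z_n=Z$; together these characterize $L_Z$. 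This route is shorter, avoids the Hamiltonian bookkeeping, and works in the stated generality with no assumption on the zero section. Your route has the virtue of making the transition maps explicit continuation maps rather than abstract units of adjunctions. You identified both strategies correctly; you just guessed wrong about which one the paper would prefer.
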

\begin{proof}In \cite[Appendix]{CyclicZHANG}, where we prove a version of the proposition for triangulated Tamarkin categories. The construction therein relies on \cite[Proposition 2.4]{Capacities2021}, where we can prove directly here: Let $X\subset Y \subset T^*M$ two conic closed sets, we consider the inclusions $\iota_{XY}:\Sh_{X}(M) \rightarrow \Sh_{Y}(M)$. In particular, we denote $\iota_{X}=\iota_{X\,T^*M}$. The same argument of \autoref{prop: refletive} shows that $\iota_{XY}$ has a left adjoint $\iota_{XY}^*$. Moreover, as $\iota_{X}=\iota_{Y}\iota_{XY}$ by definition, we have $\iota_{X}^*=\iota_{XY}^*\iota_{Y}^*$. Therefore, the unit of $(\iota_{XY}^*, \iota_{XY})$ constructs a natural transform
\[L_X=\iota_{X}\iota_{X}^* = \iota_{Y}\iota_{XY}\iota_{XY}^*\iota_{Y}^* \leftarrow \iota_{Y}\id_{\Sh_Y(M)}\iota_{Y}^*=L_Y.\]
Similarly, we can construct a morphism $L_{T^*M\setminus Y } \rightarrow L_{T^*M\setminus X}$, and a morphism of fiber sequence 
\[\begin{tikzcd}
L_{T^*M\setminus Y } \arrow[r] \arrow[d] & \id_{\Sh(M)} \arrow[r] \arrow[d, "="] & L_Y \arrow[d] \\
L_{T^*M\setminus X} \arrow[r]            & \id_{\Sh(M)} \arrow[r]                & L_X  .        
\end{tikzcd}\]
Replacing functors in the commutative diagram by their kernels, we obtain \cite[Proposition 2.4]{Capacities2021} in the $\infty$-categorical setting.

Next, we consider the decreasing sequence $\{Z_n\}_n$ and we set $Z=Z_\infty$. For any $1\leq n<m\leq \infty$, it is clear that $\iota_{Z_m Z_n}=\iota_{Z_m Z_{m-1}}\cdots \iota_{Z_{n+1} Z_n}$. Here, we notice that for finite $m$, $n\leq n+1\leq \cdots m-1\leq m$ defines a $(n-m)$-simplex in $N(\bN)$. Therefore, taking adjoints and considering units of adjoints as above give a homotopy coherent diagram $N(\bN)\rightarrow 
 \Fun^L(\Sh(M),\Sh(M)),\,n\mapsto L_{Z_n}.$ When taking $m=\infty$, the construction gives a morphism $ \varinjlim L_{Z_n}\rightarrow L_{Z_\infty}=L_Z $. 

Lastly, we prove that the morphism $ \varinjlim L_{Z_n}\rightarrow L_Z $ is an equivalence by showing that $SS(F)\subset Z$ if and only if $ F  \xrightarrow{\simeq} \varinjlim L_{Z_n}(F)$: If $SS(F)\subset Z$, then all morphisms $ F \rightarrow  L_{Z_n}(F)$ are equivalences since $Z \subset Z_n$, so we have $ F  \xrightarrow{\simeq} \varinjlim L_{Z_n}(F)$; Conversely, if $ F  \xrightarrow{\simeq} \varinjlim L_{Z_n}(F)$, then \autoref{coro: colimit estimation} shows that 
\[SS(F) \subset \bigcap_N \overline{\bigcup_{n\geq N}SS(L_{Z_n}(F))} \subset \bigcap_N \overline{\bigcup_{n\geq N}Z_n} =\bigcap_N {Z_N} = Z.\]
 
Pass to kernels, we have a homotopy coherent diagram $N(\bN)\rightarrow \Sh(M\times M),\,n\mapsto K_{Z_n},$ and an equivalence $\varinjlim K_{Z_n}\xrightarrow{\simeq} K_Z$. For $K_U$, we notice that $K_U=\cofib{1_{\Delta_M}\rightarrow K_Z} $ and then we use colimits are commute.
\end{proof}

\section{Kashiwara-Schapira microlocal cut-off functors}\label{section: Linear microlocal cut-off functors}In this section, we study the microlocal cut-off functors defined by Kashiwara and Schapira. We will follow notation and formulation of \cite[Chapter III]{guillermou2019sheaves}.

Let $M=V$ be a real vector space of dimension $n$, and we naturally identify $T^*V=V\times V^*$. A subset $\gamma \subset V$ is a cone if $\bR_{>0}\gamma=\{xv: x>0,v\in \gamma\} \subset \gamma$, we say $\gamma$ is pointed if $0\in\gamma$. We set $\gamma^a=-\gamma$. We say a cone $\gamma$ is convex/closed if it is a convex/closed set, and is proper if $\gamma \cap \gamma^{a}=\{0\}$ (equivalently, $\gamma$ contains no line). We define the dual cone $\gamma^\circ \subset V^*$ and $\widetilde{\gamma}\subset V\times V$ as:
\[\gamma^\circ=\{l\in V^*:l(v)\geq 0, \forall v\in\gamma\},\quad \widetilde{\gamma}=\{(x,y)\in V\times V: x-y \in \gamma\}.\]

For a pointed closed cone $\gamma\subset V$, we define four functors 
\begin{equation}\label{eq:defPgam}
  \begin{alignedat}{2}
P_\gamma &: \Sh(V) \to \Sh(V), & \quad
F & \mapsto {q_{2*}}( 1_{\widetilde\gamma} \otimes {q_1^{*}} F) , \\
Q_\gamma &: \Sh(V) \to \Sh(V), & \quad
F & \mapsto {q_{2!}}( \HHOM(1_{\widetilde\gamma^a} , {q_1^!} F)) , \\
P'_\gamma &: \Sh(V) \to \Sh(V), & \quad
F & \mapsto {q_{2!}}( \HHOM(1_{\widetilde\gamma^a \setminus \Delta_V}[1], {q_1^!} F)) , \\
Q'_\gamma &: \Sh(V) \to \Sh(V), & \quad
F &\mapsto {q_{2*}}( 1_{\widetilde\gamma\setminus\Delta_V}[1] \otimes {q_1^{*}} F) .
\end{alignedat}
\end{equation}
For $\gamma = \{0\}$, we have
$\widetilde{\{0\}} = \Delta_V$ and $P_{\{0\}}(F) \simeq Q_{\{0\}}(F) \simeq F$.
Using the fiber sequence
$1_{\widetilde\gamma} \to 1_{\Delta_V} \to 1_{\widetilde\gamma \setminus
  \Delta_V}[1] $ (and the same with $\gamma^a$), we obtain the fiber sequences of functors
\begin{equation}\label{equation: fiber sequence linear cut-off}
P'_\gamma\rightarrow \id \rightarrow Q_\gamma  , \quad
P_\gamma \rightarrow \id  \rightarrow Q'_\gamma  .
\end{equation}

\begin{Prop}\label{prop: functor comparision}Let $\gamma$ be a pointed closed convex cone. For $Z=V\times \gamma^{\circ a}$ and $U=T^*M\setminus Z$, we have the isomorphisms of fiber sequences
\[ [P'_\gamma\rightarrow \id \rightarrow Q_\gamma ]\simeq [L_U\rightarrow \id\rightarrow L_Z],\quad [P_\gamma \rightarrow \id  \rightarrow Q'_\gamma ]
\simeq  [R_Z\rightarrow \id\rightarrow R_U].  
\]
\end{Prop}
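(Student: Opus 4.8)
The plan is to identify the microlocal cut-off kernels $K_U, K_Z$ (or rather the functors they represent) with the Kashiwara--Schapira functors $P'_\gamma, Q_\gamma$ by checking the two universal properties that characterize them. Since both fiber sequences $[P'_\gamma \to \id \to Q_\gamma]$ and $[L_U \to \id \to L_Z]$ arise as (co)reflection sequences, it suffices to show: (i) $Q_\gamma$ lands in $\Sh_Z(V)$, i.e. $SS(Q_\gamma(F)) \subset V \times \gamma^{\circ a}$ for all $F$; (ii) the natural transformation $F \to Q_\gamma(F)$ is an isomorphism whenever $SS(F) \subset Z$, equivalently $P'_\gamma(F) = 0$ whenever $SS(F)\subset Z$; and (iii) dually, that $P'_\gamma(F) \in {}^\perp\Sh_Z(V)$, i.e. $\HOM(P'_\gamma(F), G) = 0$ for all $G$ with $SS(G)\subset Z$. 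Together (i)--(iii) say that $P'_\gamma \to \id \to Q_\gamma$ is \emph{the} split Verdier (co)localization sequence attached to $\Sh_Z(V)\subset \Sh(V)$, which by uniqueness of adjoints forces the claimed isomorphism of fiber sequences; the second fiber sequence $[P_\gamma \to \id \to Q'_\gamma]\simeq [R_Z \to \id \to R_U]$ then follows by passing to right adjoints (or by the dual argument with $*$-pushforward replaced by $!$-pushforward, using the nvolution picture of \eqref{equation: kernel for non-linear cut-off functors}).

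First I would recall the standard microsupport computations for the $\gamma$-functors. The convolution kernel of $P_\gamma$ is $1_{\widetilde\gamma}\in \Sh(V\times V)$, and a direct computation (convexity of $\gamma$, so that $\widetilde\gamma$ is a convex cone fibered over $V$) gives $SS(1_{\widetilde\gamma}) \subset \{(x,\xi,y,-\xi) : x - y \in \gamma,\ \langle \xi, x-y\rangle = 0,\ \xi \in \gamma^{\circ a}\}$ — this is essentially \cite[Proposition 3.5.3]{KS90} or the computation in \cite[Chapter III]{guillermou2019sheaves}. Feeding this into the microsupport estimate for the convolution/nvolution functors (\autoref{prop: convolution estimation in general}, with $N = M = V$) yields $SS(P_\gamma(F)), SS(Q'_\gamma(F)) \subset V \times \gamma^{\circ a}$, and by the same token $SS(P'_\gamma(F)), SS(Q_\gamma(F)) \subset V\times\gamma^{\circ a} = Z$ using the fiber sequence $1_{\widetilde\gamma^a\setminus\Delta_V}[1]$ kernel, so all four functors preserve $\Sh_Z(V)$ — in particular (i) holds. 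For (ii) and (iii), I would invoke the classical property that $Q_\gamma$ is a \emph{projector} onto $\Sh_{V\times\gamma^{\circ a}}(V)$: this is precisely the content of the Kashiwara--Schapira microlocal cut-off lemma, but rather than quote it (which would be circular given the paper's aim), I would prove it directly. For (iii), note $\HOM(P'_\gamma F, G) = \HOM(q_{2!}\RHHOM(1_{\widetilde\gamma^a\setminus\Delta_V}[1], q_1^! F), G)$; adjunction rewrites this as a section of a sheaf built from $1_{\widetilde\gamma^a\setminus\Delta_V}$ and $\RHHOM(F,G)$, and when $SS(G)\subset V\times\gamma^{\circ a}$ one shows the relevant stalk/costalk vanishes because translation in the $\gamma$-direction acts as an isomorphism on such $G$ (this is where the microsupport condition $SS(G)\subset V\times\gamma^{\circ a}$ is used: it forces $G$ to be ``$\gamma^a$-local'', i.e. $G \simeq Q_\gamma(G)$ and $1_{\widetilde\gamma}\star G \simeq G$ up to the relevant shifts).

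The main obstacle I anticipate is step (ii)/(iii) — verifying the projector property cleanly without invoking \cite[Proposition 5.2.3]{KS90}, since that would defeat the purpose. The honest route is: (a) show $Q_\gamma \circ \iota \simeq \iota$, i.e. $Q_\gamma$ restricted to $\Sh_Z(V)$ is the identity; equivalently $\widetilde\gamma$-convolution restricted to $\gamma^{\circ a}$-microsupported sheaves is an equivalence. This can be done by a deformation/non-characteristic argument: for $F$ with $SS(F)\subset V\times\gamma^{\circ a}$, the morphism $1_{\widetilde\gamma}\star F \to 1_{\Delta_V}\star F = F$ has cofiber $1_{\widetilde\gamma\setminus\Delta_V}[1]\star F$, whose microsupport one computes to be empty using $SS(F)\subset V\times\gamma^{\circ a}$ and the microsupport of $1_{\widetilde\gamma\setminus\Delta_V}$, via the geometry $\gamma^{\circ a}\cap(-\operatorname{Int}\gamma^\circ) = \varnothing$ plus a limiting argument at the boundary (this is exactly the place where the convexity and pointedness of $\gamma$ enter, and where one must be careful at the boundary of $\gamma^{\circ}$). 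Then (b) $Q_\gamma$ kills $P'_\gamma$ and conversely, from the fiber sequence \eqref{equation: fiber sequence linear cut-off} and the fact that $Q_\gamma$ is idempotent on its image. Alternatively — and this may be the path the author actually takes given \autoref{table: functor comparision table} — one can sidestep the direct verification entirely by appealing to the already-established comparison with the \cite{GS2014} functors $L_\gamma, R_\gamma$ (\autoref{example: Tamarkin projectors}) together with the relation between $P_\gamma$ and $L_\gamma$ in \cite[Section 3]{GS2014}, reducing everything to bookkeeping of which cone ($\gamma$ versus $\gamma^\circ$ versus $\gamma^{\circ a}$) and which variance ($L$ versus $R$, $*$ versus $!$) goes where; in that case the ``proof'' is a careful dictionary translation, and the only real content is the microsupport computation of $SS(1_{\widetilde\gamma})$ and the identification of the unit/counit maps on both sides.
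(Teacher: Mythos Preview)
The paper's proof is a single citation: it invokes \cite[Remark III.1.9]{guillermou2019sheaves}, where it is already established that $Q_\gamma = \iota\iota^*$ and $P_\gamma = \iota\iota^!$ for the inclusion $\iota:\Sh_{V\times\gamma^{\circ a}}(V)\hookrightarrow\Sh(V)$, with the maps induced by $1_{\Delta_V}\to 1_{\widetilde\gamma\setminus\Delta_V}[1]$ and $1_{\widetilde\gamma^a}\to 1_{\Delta_V}$ serving as the unit and counit. Uniqueness of adjoints then forces the identifications with $L_Z,R_Z$ (and hence $L_U,R_U$ via the fiber sequences). Your closing ``alternative'' --- reducing everything to a dictionary plus a citation --- is exactly what happens, only with \cite{guillermou2019sheaves} rather than \cite{GS2014} as the reference.

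Your circularity worry is misplaced. This proposition is purely a \emph{comparison} statement: it identifies the abstractly defined non-linear cut-off functors with the classical linear ones. The paper's main result, \autoref{prop: non-linear microlocal cut-off lemma}, is proved independently in \autoref{section: wrapping formula} via the wrapping formula and never appeals to this proposition. So quoting \cite{KS90} or \cite{guillermou2019sheaves} here creates no loop; the logical flow is that the new non-linear theorem \emph{recovers} the classical one once the comparison is in place, not the other way round.

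There is also a slip in your step (i): you assert $SS(P'_\gamma(F))\subset V\times\gamma^{\circ a}=Z$ for all $F$. This is false --- if it held, $P'_\gamma(F)$ would lie in $\Sh_Z(V)\cap{}^\perp\Sh_Z(V)=0$ for every $F$, contradicting $P'_\gamma\simeq L_U$. The kernel of $P'_\gamma$ involves $1_{\widetilde\gamma^a\setminus\Delta_V}$, whose microsupport picks up the full conormal to the diagonal from the fiber sequence with $1_{\Delta_V}$; feeding this into \autoref{prop: convolution estimation in general} gives no useful bound. Only the statement for $Q_\gamma$ (and $P_\gamma$) is needed for your argument, and that one is correct. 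Your direct route via (i) and (iii) would still go through after this correction, but it amounts to reproving the content of \cite[Remark III.1.9]{guillermou2019sheaves}, which the paper simply quotes.
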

\begin{RMK}Originally, \cite{KS90,guillermou2019sheaves} define triangulated functors $P_\gamma, Q_\gamma,P'_\gamma, Q'_\gamma$. Here, we use the same formula to define those functors on $\infty$-level, so they automatically descent to corresponding triangulated functors.
\end{RMK}
\begin{proof}
    It is explained in \cite[Remark III.1.9]{guillermou2019sheaves} that, for the inclusion $\iota: \Sh_{V\times \gamma^{\circ a}}(V)\rightarrow\Sh(V)$, we have that $Q_\gamma=\iota \iota^{*}$ and $P_\gamma=\iota \iota^{!}$; and the natural transformations induced by $1_{\Delta_V}\to 1_{\widetilde\gamma \setminus
  \Delta_V}[1] $ and $1_{\widetilde\gamma^a} \to 1_{\Delta_V}$ are unit/counit of the corresponding adjunctions.
\end{proof}

By the virtue of \autoref{section: verdier seq from microlocalization}, the existence of the cut-off functor appears as a purely categorical result. However, the advantage of \eeqref{eq:defPgam} is that the kernel is explicitly written (also no limit!). It is explained in \cite[Equation (III.1.5)]{guillermou2019sheaves} one can write down kernels of $P'_\gamma$ and $Q_\gamma$ as \autoref{Kernel-functor corresponding} predicted, such that the fiber sequence $K_U\rightarrow 1_{\Delta_V}\rightarrow K_Z$ is given by the fiber sequence
\[K_U=\operatorname{D}'(1_{\widetilde\gamma^a \setminus
  \Delta_V})[n-1]\rightarrow 1_{\Delta_V}\rightarrow K_Z=\operatorname{D}'(1_{\widetilde\gamma^a })[n],\]
where $\operatorname{D}'(F)=\HHOM(F,1_{V^2})$.

\begin{eg}\label{example: Tamarkin projectors}We consider a variant. Let $M=V$ be a real vector spaces. Take a pointed closed convex proper cone $\gamma \subset V$, set $U=V\times \operatorname{Int}(\gamma^\circ)$ and $Z=V \times (V^*\setminus \operatorname{Int}(\gamma^\circ))$. The microlocal cut-off functors for $U$ are studied by Tamarkin \cite{tamarkin2013} and Guillermou-Schapira \cite{GS2014} (where $L_U ,\, R_U$ are called $L_\gamma,\,R_\gamma$ respectively).

The fiber sequence of kernels is given by
\[K_U=1_{\widetilde\gamma }\rightarrow 1_{\Delta_V}\rightarrow K_V=1_{\widetilde\gamma\setminus \Delta_V }[1].\]

By \autoref{prop: functor comparision}, if we assume there exists a pointed closed convex cone $\lambda$ such that $V^*\setminus \operatorname{Int}(\gamma^\circ)=\lambda^{\circ a}$, we have $L_U=P'_\lambda=L_\gamma,  R_{U}=Q'_\lambda=R_\gamma.$
Noticed that we can show by either computation of dual functor $\operatorname{D}'$ or abstract uniqueness for the kernel that $K_U=1_{\widetilde\gamma }\simeq \operatorname{D}'(1_{\widetilde\lambda^a \setminus
  \Delta_V})[n-1]$. 
\end{eg}

\section{Results for pairs}\label{section: pair}
In this section, we consider the following construction: Take two conic closed sets $Z\subset X\subset T^*M$. Let $U=T^*M\setminus Z$ and $V=T^*M\setminus X$, then $V\subset U$. Recall that $\Sh_{X\cap U}(M;U) = \frac{\Sh_{X}(M)}{\Sh_{Z}(M)}$. 

We consider the following 9-diagram of $\bfk$-linear categories:
\begin{equation}\label{equation: 9-diagram of quotients}
\begin{tikzcd}
\Sh_{Z}(M) \arrow[r, hook] \arrow[equal]{d} & \Sh_{X}(M) \arrow[d, hook] \arrow[r, two heads] & \Sh_{X\cap U}(M;U) \arrow[d, hook] \\
\Sh_{Z}(M) \arrow[r, hook] \arrow[d, two heads]           & \Sh(M) \arrow[d, two heads] \arrow[r, two heads]              & \Sh(M;U) \arrow[d, two heads]              \\
0 \arrow[r, hook]                                                         & \Sh(M;V) \arrow[r, "\simeq "]                     & {\Sh(M;U) \Big/ \Sh_{X\cap U}(M;U) }                                                              \end{tikzcd}
\end{equation}
In this diagram, by the same argument as in \autoref{prop: refletive}, we have that all vertical and horizontal sequences are split Verdier sequences in $\PrSt(\bfk)$.  We say a morphism $f:F\rightarrow G$ in $\Sh(M;U)$ is an isomorphism on an open set $V\subset U$ if $f$ is an isomorphism in $\Sh(M;V)$. Because $\Sh_{X\cap U}(M;U)$ is a thick subcategory of $\Sh(M;U)$, we know that $f$ is an isomorphism on $V$ if and only if  $SS_U(\cofib{f})\cap V =\varnothing $.  

Here, we write down adjoint functors in the last column precisely
 \begin{equation}\label{equation: recollection diagram;pair}
    \begin{tikzcd}
\Sh_{X\cap U}(M;V)  \arrow[r, "\iota"] & \Sh(M;U)  \arrow[r, "j"] \arrow[l, "\iota^*"', bend right=30] \arrow[l, "\iota^!", bend left=30] & {\Sh(M;V).} \arrow[l, "j_!"', bend right=30] \arrow[l, "j_*", bend left=30]
\end{tikzcd}
\end{equation}
Then we have adjunction pairs
\[L_{(U,V)}\coloneqq j_!j \dashv j_*j \eqqcolon R_{(U,V)},\qquad L_{(Z,X)}\coloneqq \iota\iota^* \dashv \iota\iota^! \eqqcolon R_{(Z,X)},\]
and the units/counits give us following fiber sequences of functors on $\Sh(M;U)$
\begin{align}\label{equation: fiber sequence non-linear cut-off;pair}
        L_{(U,V)}\rightarrow \id\rightarrow L_{(Z,X)},\quad 
        R_{(Z,X)}\rightarrow \id\rightarrow R_{(U,V)}.
\end{align}
\begin{RMK}\label{remark: existence of kernel for pairs}By \eeqref{equation: kernel-functor-microlocalization}, we can also represent functors in \eeqref{equation: fiber sequence non-linear cut-off;pair} by integral kernels $K_{(Z,X)}, K_{(U,V)}\in \Sh(M\times M;(-U)\times U)$ and they are images of $K_X,K_V$ under the quotient functor $\Sh(M\times M) \rightarrow \Sh(M\times M;(-U)\times U)$. The relative version of microsupport estimation \autoref{prop: microsupport estimation of kernel} is also true.
\end{RMK}

Therefore, we can state the following version of cut-off lemma
\begin{Thm}\label{prop: non-linear microlocal cut-off lemma of pair}For conic closed sets $Z\subset X\subset T^*M$, and $U=T^*M\setminus Z$ and $V=T^*M\setminus X$, we have
\begin{enumerate}
\item 
\begin{enumerate}
\item The morphisms $F\rightarrow L_{(Z,X)}(F)$ and $R_{(Z,X)}(F)\rightarrow F$ are isomorphisms if and only if $SS_U(F)\subset X \cap U$.
\item The morphism $L_{(U,V)}(F)\rightarrow F$ is an isomorphism if and only if $F \in ^{\perp}\Sh_{X\cap U}(M;U)$, and the morphism $F\rightarrow R_{(U,V)}(F)$ is an isomorphism if and only if $F \in \Sh_{X\cap U}(M;U)^{\perp}$.
\end{enumerate}

\begin{enumerate}
 \item The morphisms $L_{(U,V)}(F)\rightarrow F$ and $F\rightarrow R_{(U,V)}(F)$ are isomorphisms on $U\cap V$.
\item If $0_M\subset Z$, the morphism $F\rightarrow L_{(Z,X)}(F)$ and $R_{(Z,X)}(F)\rightarrow F$ are isomorphisms on $\operatorname{Int}(X)\cap U$. Equivalently, we have $SS_U(L_{(U,V)}(F))\cup SS_U(R_{(U,V)}(F)) \subset \overline{V} \cap U $. Here $\overline{V}$ denotes the closure of $V$ in $T^*M$.
\end{enumerate}
\end{enumerate}
\end{Thm}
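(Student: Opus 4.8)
The plan is to deduce \autoref{prop: non-linear microlocal cut-off lemma of pair} by running the same arguments as in the absolute case \autoref{prop: non-linear microlocal cut-off lemma}, but applied to the split Verdier sequence in the last column of the 9-diagram \eqref{equation: 9-diagram of quotients}, namely $\Sh_{X\cap U}(M;V)\hookrightarrow \Sh(M;U)\twoheadrightarrow \Sh(M;V)$ with its adjoints displayed in \eqref{equation: recollection diagram;pair}. Parts (1)-(a) and the first half of (2)-(a) are immediate from the fiber sequences \eqref{equation: fiber sequence non-linear cut-off;pair} together with the characterization recalled just before the statement: a morphism $f$ in $\Sh(M;U)$ is an isomorphism on $V$ iff $SS_U(\cofib f)\cap V=\varnothing$, and $F\to L_{(Z,X)}(F)$ has cofiber (a shift of) $L_{(U,V)}(F)\in {}^{\perp}\Sh_{X\cap U}(M;U)$; since the subcategory $\Sh_{X\cap U}(M;U)$ consists exactly of objects with $SS_U\subset X\cap U$, one gets $SS_U(F)\subset X\cap U$ iff $L_Z$-map is an isomorphism, and similarly for $R_{(Z,X)}$. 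Part (1)-(b) is, as in the absolute case, a formal consequence of the semiorthogonal decomposition, i.e.\ \cite[A.2.8]{Hermitian-K-theory}.

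The substantive point is (2)-(b), for which I would invoke \autoref{remark: existence of kernel for pairs}: the functors $L_{(U,V)}, R_{(U,V)}$ are represented by kernels $K_{(U,V)}\in \Sh(M\times M; (-U)\times U)$ obtained as the image of $K_V$ (the absolute cut-off kernel for $V=T^*M\setminus X$) under the Verdier quotient $\Sh(M\times M)\to \Sh(M\times M;(-U)\times U)$. The remark also asserts that the relative version of \autoref{prop: microsupport estimation of kernel} holds, so under the hypothesis $0_M\subset Z\subset X$ (note $0_M\subset Z$ suffices since the wrapping formula only needs the zero section inside the \emph{smaller} closed set) we have $\dot{SS}(K_V)\subset (-\overline V)\times \overline V$, hence $SS_U(K_{(U,V)})\subset ((-\overline V)\times \overline V)\cap ((-U)\times U)$. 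Feeding this into the relative convolution estimate \autoref{prop: convolution estimation in general} (applied microlocally over $U$) gives $SS_U(L_{(U,V)}(F))\cup SS_U(R_{(U,V)}(F))\subset \overline V\cap U$. The "equivalently" clause then follows by the excision fiber sequences \eqref{equation: fiber sequence non-linear cut-off;pair}: $SS_U(L_{(U,V)}(F))\subset \overline V\cap U$ together with $SS_U(L_{(U,V)}(F))=SS_U$ of the fiber of $F\to L_{(Z,X)}(F)$ means exactly that $F\to L_{(Z,X)}(F)$ is an isomorphism on $U\setminus \overline V=\operatorname{Int}(X)\cap U$, and dually for $R_{(Z,X)}$.

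I expect the main obstacle to be the careful justification, promised in \autoref{remark: existence of kernel for pairs}, that the relative microsupport estimation genuinely transports: one must check that pushing $K_V$ through the quotient $\Sh(M\times M)\to \Sh(M\times M;(-U)\times U)$ is compatible with the identification of $\Sh(M\times M;(-U)\times U)$ with $\Fun^L(\Sh(N;V),\ldots)$ coming from the K\"unneth formula \eqref{equation: kernel-functor-microlocalization}, so that the kernel $K_{(U,V)}$ really does represent $L_{(U,V)}$. Concretely, this amounts to verifying that the absolute kernel fiber sequence $K_V\to 1_{\Delta_M}\to K_X$ in $\Sh(M\times M)$ maps, under the localization, to the kernel fiber sequence for the column \eqref{equation: recollection diagram;pair}; this is a diagram chase in the 9-diagram \eqref{equation: 9-diagram of quotients} combined with the compatibility of \autoref{Kernel-functor corresponding} with localization of the target, and once it is in place everything else is a mechanical repetition of the absolute proof with $SS$ replaced by $SS_U$ and all closures taken in $T^*M$ then intersected with $U$.
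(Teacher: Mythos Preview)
Your approach is correct but takes a noticeably longer route than the paper. The paper's proof is a two-line reduction to the absolute case: since the quotient $j_2\colon \Sh(M)\to \Sh(M;U)$ intertwines the absolute and relative cut-off functors (this is the compatibility in the 9-diagram you yourself isolate in your last paragraph), one has $L_{(U,V)}([F])=[L_V(F)]$ and $R_{(U,V)}([F])=[R_V(F)]$, and therefore $SS_U(L_{(U,V)}([F]))=SS(L_V(F))\cap U$. Now the absolute \autoref{prop: non-linear microlocal cut-off lemma}(2)(b) applied to the closed set $X$ (note $0_M\subset Z\subset X$) gives $SS(L_V(F))\subset \overline V\cup 0_M$, and intersecting with $U$ kills the $0_M$ contribution precisely because $0_M\subset Z$. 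That is the whole proof; no kernel in $\Sh(M\times M;(-U)\times U)$, no K\"unneth formula, and no relative convolution estimate are needed.

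By contrast, you propose to transport the entire absolute machinery to the quotient: represent $L_{(U,V)}$ by a relative kernel $K_{(U,V)}$ via \autoref{remark: existence of kernel for pairs} (which forward-references the K\"unneth formula of \autoref{section: functor classification}), estimate its $SS_{(-U)\times U}$, and then apply a relative version of \autoref{prop: convolution estimation in general}. That last step is not stated anywhere in the paper, and the most natural way to justify it is exactly the lifting argument the paper uses directly. So your plan works, but the ``mechanical repetition with $SS$ replaced by $SS_U$'' is strictly more work than simply quoting the absolute result. A small side remark: your parenthetical that the wrapping formula ``only needs the zero section inside the \emph{smaller} closed set'' is backwards---the wrapping formula for $K_V$ requires $0_M\subset X$, the larger closed set; the stronger hypothesis $0_M\subset Z$ is what ensures the $0_M$ term disappears upon intersecting with $U$.
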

\begin{proof}Here, we notice that $SS_U([F])=SS(R_U(F))\cap U=SS(L_U(F))\cap U$ for $[F]\in \Sh(M;U)$. Then we apply the absolute version \autoref{prop: non-linear microlocal cut-off lemma} to conclude the result.    
\end{proof}

\begin{eg}\label{example: Tamarkin category}We consider the manifold $M\times \bR_t$. We take a closed set $ \mathcal{Z} \subset J^1M=T^*M\times \bR$ and $\mathcal{U}=J^1M\setminus \mathcal{Z}$. We consider their conification in $T^*(M\times \bR_t)=T^*M\times \bR_t\times \bR_\tau$. We set
\begin{align}
    \begin{aligned}
        &\Omega_{>0}=\{\tau >0\},\qquad \Omega_{-}=\{\tau \leq 0\}.\\
        &U=\{(q,\tau p,t,\tau): (q,p,t)\in \mathcal{U}, \tau > 0\},\\
       &Z_{>0}=\{(q,\tau p,t,\tau): (q,p,t)\in \mathcal{Z}, \tau > 0\},\\
        &Z=Z_{>0}\cup \Omega_-.
    \end{aligned}
\end{align}
Then $\Omega_{-}\subset Z$ are two closed conic sets. In this case, the category 
\[\Sh(M\times \bR_t;U)\simeq \Sh(M\times \bR_t; \Omega_{{>0}}) / {\Sh_{Z_{>0}}(M\times \bR_t;\Omega_{{>0}})}\]
is useful for contact topology of $J^1M$, since $\bR_{>0}$ acts on $\Omega_{>0}$ freely, with the quotient $J^1M=T^*M\times \bR_t$.

If we take $\mathcal{U}=U_0 \times\bR$ for an open set $U_0\subset T^*M$ and $\mathcal{Z}=Z_0 \times\bR$ for the closed set $Z_0=T^*M\setminus U_0 $, the category $\Sh(M\times \bR_t;U)$ is exactly the so-called Tamarkin category $\sT(U_0)$ (cf. \cite{Hochschild-Kuo-Shende-Zhang}). Tamarkin categories are first defined in \cite{tamarkin2013}, and then studied in \cite{NicolasVicherythesis, chiu2017,Ike_2019, asano2017persistence,zhang2020quantitative,asanoike2022complete,GV2022viterboconj,gammasupport-as-micro-support,Shadowdistance-Asano-Ike-Li}. In this case, $U$ admits an $\bR$-action by translation along $\bR_t$. Therefore, there exists a $\sT\coloneqq \sT(\pt)$ (which is a symmetric monoidal category \cite{GS2014,Hochschild-Kuo-Shende-Zhang}) action on $\sT(U_0)$ that helps us understand the action filtration from symplectic geometry. The functors 
\[L_{U_0}^{\sT}\coloneqq L_{(\Omega_{>0},U)},\, L_{Z_0}^{\sT}\coloneqq L_{(Z,\Omega_{-})}\]
are actually $\sT$-linear. By the $\sT$-linear left adjoint functors classification (see \cite[Proposition 5.12]{Hochschild-Kuo-Shende-Zhang}, which is an enriched version of \autoref{remark: existence of kernel for pairs}), there is a fiber sequences $K_{U_0}^{\sT} \rightarrow 1^{\sT}_{\Delta_M} \rightarrow K^{\sT}_{Z_0}$ in $\sT(T^*(M\times M))=\Sh(M\times M;\sT)$ with 
\[L_{U_0}^{\sT}=\Phi^{\sT}_{K_{U_0}^{\sT}},\,L_{Z_0}^{\sT}=\Phi^{\sT}_{K_{Z_0}^{\sT}},\]
where $\Phi^{\sT}$ means the convolution functor defined using $\sT$-linear 6-operators. These functors are studied in \cite{mc-Tamarkin,chiu2017,Capacities2021,CyclicZHANG,Hochschild-Kuo-Shende-Zhang}, where corresponding kernels are denoted by $K_{U_0}^{\sT}=P_{U_0}, K_{Z_0}^{\sT}=Q_{U_0}$. 

Here, we can apply \autoref{prop: non-linear microlocal cut-off lemma of pair} to Tamarkin categories, regarded as $\bfk$-linear categories $\Sh(M\times \bR_t;U)$, to obtain an microsupport estimation $SS_{\Omega_{>0}}(L_{(\Omega_{>0},U)}(F))\subset \overline{U}\cap \Omega_{>0}$. Then we obtain an estimation for reduced microsupport: 
\[RS(L_{U_0}^{\sT}(F))=RS(L_{(\Omega_{>0},U)}(F))=\{(q,p): (q,p,t,1)\in SS_{\Omega_{>0}}(L_{(\Omega_{>0},U)}(F))\} \subset \overline{U_0}.\]

In general, if the open set $\mathcal{U}\subset J^1M$ admits a $\mathbb{G}$-action for a subgroup $\mathbb{G}\subset \bR$ via translation, Asano, Ike and Kuwagaki introduce an equivariant version of categories, see \cite{asano2020sheaf,Kuwagaki_WKB,IK-NovikovTamarkinCategory}: We consider $\mathbb{G}$ as a discrete group by forgetting its topology, in this case, the group homomorphism $\mathbb{G} \rightarrow \bR_t$ is still continuous. Then we consider the $\mathbb{G}$-action on $\Sh(M\times \bR;\Omega_+)$ via translation, and define 
\[\Sh^{\mathbb{G}}(M\times \bR;\Omega_{>0}) \coloneqq (\Sh(M\times \bR;\Omega_{>0}))^{\mathbb{G}} \coloneqq \varprojlim_{\mathbb{G}}\Sh(M\times \bR;\Omega_{>0}).\]
There exists a forgetful functor $\mathfrak{f}:\Sh^{\mathbb{G}}(M\times \bR;\Omega_{>0}) \rightarrow \Sh(M\times \bR;\Omega_{>0})$, and the notion $SS_{\Omega_>0}(F)\coloneqq SS_{\Omega_>0}(\mathfrak{f}(F))$ is well-defined. Therefore, we can define categories 
\[\Sh^{\mathbb{G}}(M\times \bR_t;U), \quad {\Sh_{Z_{>0}}^{\mathbb{G}}(M\times \bR_t;\Omega_{{>0}})}.\]
The cut-off functor and cut-off lemma are still true and can be proven by the following ingredients: The forgetful functor $\mathfrak{f}$ admits both left and right adjoint, and $\Sh(N\times M\times \bR;\Omega_{>0}) $ acts on $\Sh^{\mathbb{G}}(M\times \bR;\Omega_{>0}) $ via a convolution and $\mathfrak{f}^L$. Then we can construct the wrapping formula via the action and show all results of this article directly and carefully (especially on the categorical aspect rather than the microlocal aspect). We also leave the details to the readers as it a little more beyond the purpose of this article.
\end{eg}

\section{K\"unneth formula}\label{section: functor classification} 
Here, we present some computations of Lurie tensor products, which generalize the result of \cite[Theorem 1.2]{Kuo-Li-Duality2024}, where an isotropic condition is needed. We take manifolds $N, M$, conic closed sets $Z\subset T^*M$ and $X\subset T^*N$, and we set $U=T^*M\setminus Z$ and $V=T^*N\setminus X$. 

To start with, we recall that $\cC\in \PrSt$ is dualizable if it is a dualizable object with respect to the symmetric monoidal structure defined by the Lurie tensor product, and its dual is denoted by $\cC^\vee$. Then we recall that:  
\begin{Lemma}[{\cite[Remark 3.7]{Hochschild-Kuo-Shende-Zhang}}]The category $\Sh_Z(M)$ is dualizable with dual $\Sh_{-Z}(M)$. The category $\Sh(M;U)$ is dualizable with the dual $\Sh(M;-U)$.     
\end{Lemma}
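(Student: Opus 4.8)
Only the identification of the duals requires work; we describe the plan, following \cite[Remark 3.7]{Hochschild-Kuo-Shende-Zhang}.

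\emph{Dualizability.} The first step is to exhibit $\Sh_Z(M)$ and $\Sh(M;U)$ as retracts, in $\PrSt$, of $\Sh(M)$: in the split Verdier sequence \eqref{equation: recollection diagram} one has $\iota^*\iota\simeq\id$ and $jj_!\simeq\id$, so $(\iota,\iota^*)$ and $(j_!,j)$ exhibit $\Sh_Z(M)$ and $\Sh(M;U)$ as retracts of $\Sh(M)$. Since $\Sh(M)$ is dualizable --- a consequence of the $6$-functor formalism for locally compact Hausdorff spaces, cf. \cite{Six-FunctorScholze,6functor-infinity} --- and a retract of a dualizable object of a symmetric monoidal category is again dualizable, both $\Sh_Z(M)$ and $\Sh(M;U)$ are dualizable.

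\emph{Self-duality of $\Sh(M)$.} Next one records that $\Sh(M)$ is in fact self-dual: by \autoref{Kernel-functor corresponding} with $H_2=\pt$, $\Sh(M)^\vee\simeq\Fun^L(\Sh(M),\bfk)\simeq\Sh(M)$, and under $\Sh(M)\otimes\Sh(M)\simeq\Sh(M\times M)$ one can take the evaluation $\mathrm{ev}\colon\Sh(M\times M)\xrightarrow{\Delta^*}\Sh(M)\xrightarrow{\Gamma_c(M;-)}\bfk$ and the coevaluation $\mathrm{coev}\colon\bfk\to\Sh(M\times M)$ classifying the diagonal kernel $1_{\Delta_M}$; the triangle identities are a direct computation with $\Delta$, $\Gamma_c$ and the projections.

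\emph{Identifying the duals.} It remains to produce perfect pairings realizing $\Sh_{-Z}(M)=\Sh_Z(M)^\vee$ and $\Sh(M;-U)=\Sh(M;U)^\vee$. The plan is to take as evaluation the restriction of $\mathrm{ev}$ along the inclusion $\Sh_Z(M)\otimes\Sh_{-Z}(M)\to\Sh(M)\otimes\Sh(M)\simeq\Sh(M\times M)$, which is fully faithful by \cite[Theorem 2.2]{Efimov-K-theory} (applied to the dualizable category $\Sh_{-Z}(M)$ and the fully faithful inclusion $\iota\colon\Sh_Z(M)\hookrightarrow\Sh(M)$), and correspondingly for $\Sh(M;U)\otimes\Sh(M;-U)$. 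The matching coevaluation should be the image of $1_{\Delta_M}$ under a cut-off $\Sh(M\times M)\to\Sh_{-Z}(M)\otimes\Sh_Z(M)$ (resp. $\Sh(M\times M)\to\Sh(M;-U)\otimes\Sh(M;U)$); concretely, under the fully faithful embeddings above this coevaluation is encoded by the microlocal cut-off kernel $K_Z$ (resp. $K_U$) of \eqref{equation: fiber sequence for kernel}, whose microsupport is controlled as in \autoref{prop: microsupport estimation of kernel}. Granting these two pieces of data, the two triangle identities for each pairing are then to be verified by reducing them, via the triangle identities of $(\mathrm{ev},\mathrm{coev})$, to the facts that $L_Z$ and $L_{-Z}$ restrict to the identity on $\Sh_Z(M)$ and $\Sh_{-Z}(M)$ (i.e. $\iota^*\iota\simeq\id$ and $\iota_{-Z}^*\iota_{-Z}\simeq\id$), respectively $L_U$ and $L_{-U}$ on $\Sh(M;U)$ and $\Sh(M;-U)$ (i.e. $jj_!\simeq\id$). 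Alternatively, the quotient case can be deduced from the subcategory case by dualizing the fiber sequences \eqref{equation: fiber sequence non-linear cut-off} and noting that $-$ interchanges them.

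\emph{Main obstacle.} The genuine content is the triangle-identity verification in the third step: one must understand precisely how the evaluation of $\Sh(M)$, built out of $\Delta^*$ and $\Gamma_c$, interacts with the cut-off kernels --- in effect, pin down the coevaluation $\bfk\to\Sh_{-Z}(M)\otimes\Sh_Z(M)$ as the correct ``$(\Sh_{-Z}\otimes\Sh_Z)$-valued cut-off'' of $1_{\Delta_M}$ and check that restriction of $\mathrm{ev}$ is adjoint to it. This is the step where the microlocal input (the kernels $K_Z$, $K_U$ and their microsupports) really enters; everything else is formal.
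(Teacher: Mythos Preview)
The paper does not supply its own proof of this lemma; it is simply quoted from \cite[Remark 3.7]{Hochschild-Kuo-Shende-Zhang}. Your retract argument for bare dualizability is correct and standard, and matches what one expects from that reference.

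Your plan for identifying the duals, however, has a genuine gap that your ``main obstacle'' paragraph does not quite pin down. You propose to take the coevaluation to be the microlocal kernel $K_Z$ (resp.\ $K_U$), regarded as an object of $\Sh_{-Z}(M)\otimes\Sh_Z(M)$ via the fully faithful embedding into $\Sh(M\times M)$. But you have not shown that $K_Z$ lies in the essential image of that embedding. Two things are missing. First, \autoref{prop: microsupport estimation of kernel} bounds $SS(K_U)$ by $(-\overline{U})\times\overline{U}\cup 0_{M\times M}$, and the triangle inequality against $1_{\Delta_M}$ does \emph{not} yield $SS(K_Z)\subset(-Z)\times Z$ (the conormal to the diagonal meets $(-U)\times U$). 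Second, and more seriously, even if one had $SS(K_Z)\subset(-Z)\times Z$, concluding $K_Z\in\Sh_{-Z}(M)\otimes\Sh_Z(M)$ requires identifying that tensor product with $\Sh_{(-Z)\times Z}(M\times M)$, which is precisely the K\"unneth theorem of \autoref{section: functor classification}. Since the paper proves K\"unneth \emph{using} the present lemma (dualizability is what lets one invoke \cite[Theorem 2.2]{Efimov-K-theory}), this is circular within the paper's logic.

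A route that avoids this is to stay with the retract description. The essential image of $\Sh_{-Z}(M)\otimes\Sh_Z(M)\hookrightarrow\Sh(M\times M)$ is exactly the fixed locus of the idempotent $L_{-Z}\boxtimes L_Z$ (no K\"unneth needed: this is the tensor of two retractions). One then checks directly, via \autoref{Kernel-functor corresponding} and \autoref{Kernel-functor corresponding right}, that $K_Z$ is fixed by $L_Z$ in the second variable (because $L_Z\circ L_Z\simeq L_Z$) and by $L_{-Z}$ in the first (equivalently, $R_{-Z}$ post-composed with $\Psi_{K_Z}=R_Z$ is again $R_Z$, since $R_Z$ already lands in $\Sh_Z(M)\subset\Sh_{-Z}(M)^{\perp\perp}$ after the sign flip). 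This places $K_Z$ in the tensor product without any microsupport estimate, and from there your triangle-identity check goes through.
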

\begin{RMK}In fact, \autoref{prop: refletive} is enough to guarantee that $\Sh_Z(M)$ and $\Sh(M;U)$ are dualizable as stable linear categories (c.f. \cite[Proposition 1.17.]{Efimov-K-theory}), and local rigidity of $\bfk$ ensure that the dual is actually a $\bfk$-linear dual \cite[Proposition 4.3.3]{Fake_sheaves_on_mfd}. The natural of the lemma here is that we identify the dual of $\Sh_Z(M)$ and $\Sh(M;U)$.    
\end{RMK}

\begin{RMK}In the following propositions, the box tensor $\boxtimes$ of kernels should be understood after identifying $(N\times M)^2\simeq N^2 \times M^2$ via $((n_1,m_1),(n_2,m_2)) \mapsto (n_1,n_2,m_1,m_2)$.    
\end{RMK}

\begin{Thm}We have that $K_X\boxtimes 
 K_Z \simeq K_{X\times Z}$, and $\Sh_X(N)\otimes \Sh_Z(M)\simeq \Sh_{X\times Z}(N\times M)$.    
\end{Thm}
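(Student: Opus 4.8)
The plan is to prove the kernel identity $K_X \boxtimes K_Z \simeq K_{X \times Z}$ first, and then deduce the category-level equivalence $\Sh_X(N) \otimes \Sh_Z(M) \simeq \Sh_{X \times Z}(N \times M)$ from it by passing through the functor classification of \autoref{Kernel-functor corresponding}. Since $\Sh_X(N)$ and $\Sh_Z(M)$ are dualizable (by the Lemma above) and $\Sh(-)$ is symmetric monoidal in a way compatible with the Lurie tensor product, the natural candidate for the equivalence is the box-tensor of sheaves on product spaces; the content is identifying the microsupport condition $SS \subset X \times Z$ on $N \times M$ with the tensor-product subcategory, and the kernels encode exactly this identification.

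First I would set up notation: the kernel $K_X \in \Sh(N \times N)$ sits in the fiber sequence $K_U \to 1_{\Delta_N} \to K_X$ of \eqref{equation: fiber sequence for kernel} (written here for $X$ in place of $Z$), with $L_X = \iota\iota^* = \Phi_{K_X}$ the composite of the localization and inclusion for $\Sh_X(N) \hookrightarrow \Sh(N)$; similarly for $K_Z$ and $K_{X\times Z}$. The morphism $K_X \boxtimes K_Z \to 1_{\Delta_N} \boxtimes 1_{\Delta_M} = 1_{\Delta_{N \times M}}$ (obtained by box-tensoring the two structure maps $1_{\Delta_N} \to K_X$, and noting we actually want to box the \emph{co}units — more precisely box the whole fiber sequences and look at the third terms) should be shown to exhibit $K_X \boxtimes K_Z$ as the microlocal cut-off kernel $K_{X \times Z}$. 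By the uniqueness half of \autoref{Kernel-functor corresponding} (the kernel representing a left adjoint functor is unique), it suffices to show two things: (i) the convolution functor $\Phi_{K_X \boxtimes K_Z}$ has essential image inside $\Sh_{X\times Z}(N \times M)$ and is the identity on that subcategory (equivalently, it is a Bousfield localization onto $\Sh_{X\times Z}(N\times M)$ with the correct unit), and (ii) the unit map $1_{\Delta_{N\times M}} \to K_X \boxtimes K_Z$ agrees with the unit $1_{\Delta_{N\times M}} \to K_{X\times Z}$ of the adjunction for $\Sh_{X\times Z}(N\times M)$. For (i), I would use that $\Phi_{K_X \boxtimes K_Z} \simeq \Phi_{K_Z} \circ (\text{apply } \Phi_{K_X} \text{ in the } N\text{-variable})$ — convolution kernels compose/tensor in the obvious way under the identification $(N\times M)^2 \simeq N^2 \times M^2$ — together with the microsupport estimate \autoref{prop: convolution estimation in general} to get $SS \subset X \times T^*M$ from the first step and $SS \subset T^*N \times Z$ from the second, hence $SS \subset X \times Z$; conversely, if $F \in \Sh_{X \times Z}(N\times M)$ I would check $\Phi_{K_X \boxtimes K_Z}(F) \simeq F$ by applying each cut-off separately and using \autoref{prop: non-linear microlocal cut-off lemma} (1)-(a): a sheaf with $SS(F) \subset X \times Z \subset X \times T^*M$ is fixed by $L_X$ acting in the $N$-direction, and then the result still has $SS \subset T^*N \times Z$ so is fixed by $L_Z$ acting in the $M$-direction. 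Here the subtle point is justifying "acting in one variable only," which amounts to a Fubini-type statement: $\Phi_{K \boxtimes 1_{\Delta_M}}$ really is $\Phi_K$ applied fiberwise, which follows from base change in the $6$-functor formalism and commutes appropriately because $N$ is locally compact Hausdorff, so \autoref{Kernel-functor corresponding} applies and proper/smooth base change handles the exchange of the $\Sh(N)$- and $\Sh(M)$-direction operations.

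Once the kernel identity is in hand, the category equivalence follows formally: under $\Sh(N\times N) \simeq \Fun^L(\Sh(N),\Sh(N))$, the idempotent $K_X$ corresponds to the Bousfield localization onto $\Sh_X(N)$, and idempotents tensor to idempotents, with the image of $\Phi_{K_X \boxtimes K_Z} = \Phi_{K_X} \otimes \Phi_{K_Z}$ (as an endofunctor of $\Sh(N\times M) \simeq \Sh(N) \otimes \Sh(M)$, using \autoref{Kernel-functor corresponding} to identify $\Sh(N\times M)$ with the tensor product — this needs $N$ locally compact Hausdorff, which is fine) being $\Sh_X(N) \otimes \Sh_Z(M)$ by the general fact that the tensor product of two localizations $\cC \to \cC_0$, $\cD \to \cD_0$ of dualizable/compactly generated categories is the localization $\cC \otimes \cD \to \cC_0 \otimes \cD_0$ onto the tensor of the local subcategories — here is where dualizability of $\Sh_Z(M)$ (hence $\Sh_Z(M) \otimes -$ preserves fully faithful functors, Efimov's theorem cited above) is used to know $\Sh_X(N)\otimes\Sh_Z(M) \hookrightarrow \Sh(N) \otimes \Sh(M) = \Sh(N\times M)$ is fully faithful. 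The image of this fully faithful functor is exactly the fixed points of the idempotent $K_X \boxtimes K_Z = K_{X\times Z}$, which by the kernel identity is $\Sh_{X\times Z}(N\times M)$.

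The main obstacle I anticipate is the Fubini/base-change bookkeeping: carefully verifying that convolution with $K_X \boxtimes K_Z$ on $N \times M$ decomposes as the composite of the two one-variable convolutions, with all the coherences, and that this composite can equivalently be described as $\Phi_{K_X} \otimes \Phi_{K_Z}$ on $\Sh(N) \otimes \Sh(M)$. This is morally routine but requires care with the $6$-functor formalism (compatibility of $\boxtimes$ with $p_{1}^*, p_{2*}$ and proper base change) and with the identification $\Sh(N\times M) \simeq \Sh(N)\otimes\Sh(M)$ on the level of kernels; an alternative, possibly cleaner route is to bypass kernels and argue directly at the level of localizing subcategories, showing $\Sh_{X\times Z}(N\times M)$ is the image of $\Sh_X(N)\otimes\Sh_Z(M)$ by comparing the right-orthogonal complements (quotients) $\Sh(N\times M; V\times U)$ — but that essentially reduces to the second K\"unneth formula in the introduction, so I would treat the two statements in tandem, proving the microsupport-support subcategory identity $\Sh_{X\times Z} \simeq \Sh_X \otimes \Sh_Z$ and the quotient identity $\Sh(N;V)\otimes\Sh(M;U) \simeq \Sh(N\times M; V\times U)$ together, each feeding the other through the split Verdier sequences.
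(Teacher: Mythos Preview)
Your overall architecture matches the paper's: prove the kernel identity first, then deduce the category equivalence from it via dualizability and the idempotent picture. The direction showing that $\Phi_{K_X\boxtimes K_Z}$ lands in $\Sh_{X\times Z}(N\times M)$, and the passage from the kernel identity to the K\"unneth equivalence, are essentially what the paper does.

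The gap is in your converse step. You write that a sheaf $F$ with $SS(F)\subset X\times Z\subset X\times T^*M$ is ``fixed by $L_X$ acting in the $N$-direction,'' citing \autoref{prop: non-linear microlocal cut-off lemma}(1)(a). But (1)(a) is a statement about $L_X$ on $\Sh(N)$; what you are actually invoking is that $L_X\otimes\id_{\Sh(M)}=\Phi_{K_X\boxtimes 1_{\Delta_M}}$ fixes every $F\in\Sh(N\times M)$ with $SS(F)\subset X\times T^*M$. That claim is equivalent to $\Sh_{X\times T^*M}(N\times M)=\Sh_X(N)\otimes\Sh(M)$, which is precisely the theorem in the special case $Z=T^*M$. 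If instead you apply (1)(a) on the manifold $N\times M$ with closed set $X\times T^*M$, you get information about $L_{X\times T^*M}$, and identifying $L_{X\times T^*M}$ with $L_X\otimes\id$ is again the kernel identity $K_{X\times T^*M}\simeq K_X\boxtimes 1_{\Delta_M}$ you are after. Your alternative plan of comparing right orthogonals runs into the same circularity: it reduces to the second K\"unneth formula, which the paper derives from this one.

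The paper breaks this circle with the wrapping formula (\autoref{chris thesis}). It writes $K_X\boxtimes K_Z$ as a double colimit of $K(\phi_{H_{n_1}})|_1\boxtimes K(\phi_{H_{n_2}})|_1$ for Hamiltonians supported away from $X^\infty$ and $Z^\infty$, convolves the full two-parameter GKS family against $F$, and observes that when $SS(F)\subset X\times Z$ the Hamiltonians vanish on $\dot{SS}(F)$, so the resulting sheaf on $N\times\bR\times M\times\bR$ has microsupport contained in $T^*(N\times M)\times 0_{\bR^2}$. Constancy in the two deformation parameters then gives $\Phi_{K(\phi_{H_{n_1}})|_1\boxtimes K(\phi_{H_{n_2}})|_1}(F)\simeq F$ for each $(n_1,n_2)$, and passing to the colimit yields $\Phi_{K_X\boxtimes K_Z}(F)\simeq F$. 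This GKS/Hamiltonian input is the missing ingredient in your sketch.
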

\begin{proof}We first assume that both $X$ and $Z$ containing the zero section.

For the first statement. We set $K\coloneqq K_X\boxtimes 
 K_Z $. It is clear that the morphism $1_{\Delta_N}\boxtimes 1_{\Delta_M}=1_{\Delta_{N\times M}} \rightarrow K$ induces an equivalence $\Phi_{K}\xrightarrow{\simeq} \Phi_{K}\circ \Phi_{K}  $. Henceforth, $\Phi_{K}$ is a projector in the sense of \cite[Definition 4.1.1]{KS2006}, whose essential image are given by the full subcategory spanned $F$ with $F\xrightarrow{\simeq} \Phi_{K}(F)$ (c.f. \cite[Proposition 4.1.3]{KS2006}). 
 
We are going to prove that $\Phi_{K}=\Phi_{K_{X\times Z}}$ now. Because $\Phi_{K_{X\times Z}}$ is the projector whose essential image is $\{F: F\xrightarrow{\simeq} \Phi_{K_{X\times Z}}(F)\}=\Sh_{X\times Z}(N\times M)$ by its definition. It remains to show that $F\xrightarrow{\simeq} \Phi_{K}(F)$ if and only if $ F\in \Sh_{X\times Z}(N\times M)$. 

For $F\in \Sh(N\times M)$ with $\Phi_{K}( F)\simeq F$, we can write $F=\varinjlim   F_\alpha \boxtimes G_\alpha $, and then we have $F \simeq \varinjlim \Phi_{K_X} (F_\alpha) \boxtimes \Phi_{K_Z} ( G_\alpha)$. So $SS(F)= SS(\varinjlim \Phi_{K_X} ( F_\alpha) \boxtimes \Phi_{K_Z} ( G_\alpha))\subset \overline{SS(\Phi_{K_X} ( F_\alpha) ) \times SS(\Phi_{K_Z} ( G_\alpha))} \subset X\times Z $.

Conversely, we need to show that $K$ fixes $F\in \Sh_{X\times Z}(N\times M)$. We use the wrapping formula \autoref{chris thesis}, and adapt the argument of \cite[Lemma 4]{gammasupport-as-micro-support}. For cofinal sequences of conic non-negative Hamiltonian functions $H_{\lambda_1(n_1)}$ supported in $U$ and $H_{\lambda_2(n_2)}$ supported in $V$ with index sequences $\lambda_i: \bN \rightarrow \bN$, we have that $K\simeq \varinjlim_{(n_1,n_2)} K(\phi_{H_{\lambda_1(n_1)}})|_1\boxtimes  K(\phi_{H_{\lambda_2(n_2)}})|_1$. Now, consider the 1-parameter version of GKS quantization $K(\phi_{H_{\lambda_1(n_1)}})$ and $K(\phi_{H_{\lambda_2}(n_2)})$ as given in \autoref{theorem: GKS}. By a parameter version of \eqref{equation: GKS action estimation.} (precisely, \cite[Equation (4.3)]{GKS2012}), it is direct to see that for $W=\Phi_{K(\phi_{H_{\lambda_1(n_1)}})\boxtimes K(\phi_{H_{\lambda_2(n_2)}})} (F)\in \Sh(N\times \bR\times M\times \bR)$, we have
\begin{equation*}
 SS(W)\subset 
 \left\lbrace (q_1,z_1, p_1, \zeta_1,q_2 ,z_2,p_2,\zeta_2): \begin{aligned} &\exists (q_1',p_1',q_2',p_2')\in SS(F) ,\\ 
                 &(q_i,p_i)=\phi_{z_i}^{H_{\lambda_i(n_i)}}(q_i',p_i'),\\
                 &\zeta_i=-H_{\lambda_i(n_i)}(q_i',p_i')  \end{aligned}        \right\rbrace .
\end{equation*}
Now, since $SS(F)\subset X\times Z$, we have that $SS(W)\subset T^*(N\times M) \times 0_{\bR^2}$. Then by \cite[Proposition 5.4.5]{KS90}, we have $F=W|_{z_1=z_2=0}\simeq W|_{z_1=z_2=1}= \Phi_{K(\phi_{H_{\lambda_1(n_1)}})|_1\boxtimes  K(\phi_{H_{\lambda_2(n_2)}})|_1} ( F)$. The isomorphism is commute with continuation map given by non-negativity of $H_{\lambda_i(n_i)}$ (see \autoref{theorem: GKS}). Then we taking limit with respect to $(n_1,n_2)$ to see that $\Phi_{K}( F)\simeq F$.

Consequently, we have $K_{X\times Z}\simeq K=K_X\boxtimes 
 K_Z$ by uniqueness of kernel.

For the second statement, under the natural identification $\Sh(N)\otimes \Sh(M)=\Sh(N\times M)$ (see \cite[Proposition 2.30]{6functor-infinity}), it remains to verify that the essential image of the functor\[\Sh_X(N)\otimes\Sh_{Z}( M)\rightarrow \Sh(N)\otimes\Sh( M) = \Sh(N\times M), \quad F\otimes G \mapsto F\boxtimes G\] is $\Sh_{X\times Z}(N\times M)$. In fact, any object 
$F$ in $\Sh_{X\times Z}(N\times M)$ can be written
as a colimit of some $F=\varinjlim F_\alpha \boxtimes G_\alpha \in \Sh(N\times M)$. We should show that the $F_\alpha$ can be chosen in $\Sh_X(N)$ and $G_\alpha$ can be chosen to be in $\Sh_Z(M)$. By definition of $K_{X\times Z}$, we have $F\simeq \Phi_{K_{X\times Z}}(F)$. As we have already proved that $K_{X\times Z}\simeq K_X\boxtimes K_Z$, we have
\[F  \simeq 
\Phi_{K_{X\times Z}}(\varinjlim F_\alpha \boxtimes G_\alpha) \simeq \varinjlim \Phi_{K_X\boxtimes K_Z}(F_\alpha \boxtimes  G_\alpha  )=
\varinjlim \Phi_{K_X}(F_\alpha) \boxtimes  \Phi_{K_Z}(G_\alpha  ).
\]
Then by replacing $F_\alpha$ and $G_\alpha$ by $\Phi_{K_X}(F_\alpha)$ and $\Phi_{K_Z}(G_\alpha  )$, we can conclude
that $F\in \Sh_X(N)\otimes\Sh_{Z}( M)$.

Now, we remove the assumption that $X,Z$ contain the zero section. In this case, we set $Z_0=Z\cap 0_M$ and $\Tilde{Z}=Z\cup 0_M$ (similarly for $X_0$ and $\Tilde{X}$). Then we use the result for $\Tilde{X}$ and $\Tilde{Z}$, which contain zero sections, from above; and conclude by noticing that $SS(F)\subset Z$ if and only if $SS(F) \subset \Tilde{Z}$ and $\supp{F}\subset Z_0$, and $F_n\otimes G_m\simeq  (F\boxtimes G)_{(n,m)}$ for $(n,m)\in N\times M$.
\end{proof}

\begin{Coro}
\label{proposition: relative case}We have an equivalence of fiber sequences of categories
\begin{equation*}
    \begin{split}
       &\Sh_{X\times Z}(N\times M) \rightarrow \Sh_{X\times T^*M }(N\times M) \rightarrow  \Sh_{X\times U }(N\times M;T^*N\times U) \\
       \simeq &\Sh_X(N)\otimes (\Sh_{ Z}( M) \rightarrow \Sh( M) \rightarrow  \Sh(M;U)).
    \end{split}
\end{equation*}

\end{Coro}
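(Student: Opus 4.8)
The plan is to deduce this formally from the K\"unneth theorem proved just above together with the dualizability of $\Sh_X(N)$. First I would rewrite the bottom row of the displayed equivalence as the split Verdier sequence of a subcategory inclusion. Since $T^*(N\times M)=T^*N\times T^*M$, we have $T^*N\times U=T^*(N\times M)\setminus(T^*N\times Z)$, so in the notation of \autoref{section: pair}, with ambient manifold $N\times M$, larger conic closed set $X\times T^*M$ and smaller conic closed set $X\times Z$, and using $(X\times T^*M)\cap(T^*N\times U)=X\times U$, one obtains
\[\Sh_{X\times U}(N\times M;T^*N\times U)=\frac{\Sh_{X\times T^*M}(N\times M)}{\Sh_{X\times Z}(N\times M)}.\]
Thus the top row of the display is exactly the split Verdier sequence attached to $\Sh_{X\times Z}(N\times M)\hookrightarrow\Sh_{X\times T^*M}(N\times M)$, which is split Verdier by the argument of \autoref{prop: refletive} (this is the special case of \eqref{equation: 9-diagram of quotients} with $M$ replaced by $N\times M$).

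Next I would apply the functor $\Sh_X(N)\otimes(-)$ to the split Verdier sequence $\Sh_Z(M)\xrightarrow{\iota}\Sh(M)\xrightarrow{j}\Sh(M;U)$. On $\PrSt$ the functor $-\otimes\Sh_X(N)$ is a left adjoint, hence preserves all colimits, in particular the pushout $\Sh(M)\sqcup_{\Sh_Z(M)}0$ that computes the Verdier quotient; it also carries the adjunction $\iota^*\dashv\iota$ to $\iota^*\otimes\id\dashv\iota\otimes\id$, and $\iota\otimes\id$, being continuous between presentable categories, has a right adjoint by the adjoint functor theorem. Crucially, $\Sh_X(N)$ is dualizable by \cite[Remark 3.7]{Hochschild-Kuo-Shende-Zhang}, so Efimov's theorem \cite[Theorem 2.2]{Efimov-K-theory} shows that $-\otimes\Sh_X(N)$ preserves fully faithful functors; hence $\iota\otimes\id$ is a fully faithful, colimit-preserving inclusion admitting both adjoints. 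It follows that
\[\Sh_Z(M)\otimes\Sh_X(N)\xrightarrow{\iota\otimes\id}\Sh(M)\otimes\Sh_X(N)\xrightarrow{j\otimes\id}\Sh(M;U)\otimes\Sh_X(N)\]
is again a split Verdier sequence whose third term is the Verdier quotient of the first two (cf.\ \cite[A.2]{Hermitian-K-theory} and the references on Verdier quotients of stable categories recalled in \autoref{section: verdier seq from microlocalization}).

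Finally I would plug in the K\"unneth identifications. Applying the theorem above with the pair $(X,Z)$ and with the pair $(X,T^*M)$ --- recalling $\Sh(M)=\Sh_{T^*M}(M)$ --- gives equivalences $\Sh_X(N)\otimes\Sh_Z(M)\simeq\Sh_{X\times Z}(N\times M)$ and $\Sh_X(N)\otimes\Sh(M)\simeq\Sh_{X\times T^*M}(N\times M)$; both are realized by $F\otimes G\mapsto F\boxtimes G$, hence compatible with the subcategory inclusions, so the square relating $\iota\otimes\id$ to $\Sh_{X\times Z}(N\times M)\hookrightarrow\Sh_{X\times T^*M}(N\times M)$ commutes. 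Taking cofibers --- which by the previous two paragraphs are the respective Verdier quotients --- identifies $\Sh_X(N)\otimes\Sh(M;U)$ with $\Sh_{X\times U}(N\times M;T^*N\times U)$ and assembles everything into an equivalence of fiber sequences. I expect the main obstacle to be precisely these two compatibility points: naturality of the K\"unneth equivalences with respect to the inclusions (to upgrade termwise equivalences to an equivalence of sequences), and the fact that the cofiber of $\iota\otimes\id$ in $\PrSt$ is still a quotient by a subcategory --- which is exactly where dualizability of $\Sh_X(N)$, via Efimov's preservation of full faithfulness, is indispensable.
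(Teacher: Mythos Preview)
Your proposal is correct and follows essentially the same approach as the paper: tensor the split Verdier sequence $\Sh_Z(M)\to\Sh(M)\to\Sh(M;U)$ with $\Sh_X(N)$, use that $\Sh_X(N)\otimes(-)$ preserves colimits and (via dualizability and Efimov's theorem) fully faithful inclusions, and then identify the first two terms with the top row via the preceding K\"unneth theorem. The paper's proof is a terse two-line version of exactly this argument; you have simply spelled out the identification of the quotient term and the compatibility of the K\"unneth equivalences with the inclusions, which the paper leaves implicit.
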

\begin{proof}The functor $- \mapsto \Sh_X(N)\otimes -$ preserves colimits and fully-faithfulness. This can be seen from two different points of view, one from dualizability of $\Sh_X(N)$ and \cite[Theorem 2.2]{Efimov-K-theory}, the other one from \cite[Corollary 2.29]{Haine-Nonabelian-basechange} since in the stable setting recollement pair and split Verdier sequence are equivalent notion \cite[Proposition A.2.11]{Hermitian-K-theory} (in this approach we do not need dualizability). Therefore, we have the equivalence of Verdier sequences
\begin{equation*}
    \begin{split}
      &\Sh_X(N)\otimes (\Sh_{ Z}( M) \rightarrow \Sh( M) \rightarrow  \Sh(M;U)) \\
       \simeq &\Sh_X(N)\otimes\Sh_{ Z}( M) \rightarrow \Sh_X(N)\otimes\Sh( M) \rightarrow  \Sh_X(N)\otimes\Sh(M;U).\qedhere
    \end{split}
\end{equation*}
\end{proof}

\begin{Thm}We have an equivalence 
\[\Sh(N;V)\otimes \Sh(M;U) \simeq \Sh(N\times M;V\times U).\] 
In particular, we have that $K_V\boxtimes 
 K_U \simeq K_{V\times U}$.
\end{Thm}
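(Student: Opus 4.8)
The plan is to realize $\Sh(N\times M;V\times U)$ as an iterated Verdier quotient and to match it with the iterated quotient that computes the tensor product. First, writing $\Sh(N;V)\simeq \cofib{\Sh_X(N)\to\Sh(N)}$ in $\PrSt$ and using that $-\otimes\Sh(M;U)$ preserves colimits, we get
\[\Sh(N;V)\otimes\Sh(M;U)\simeq \cofib{\Sh_X(N)\otimes\Sh(M;U)\to\Sh(N)\otimes\Sh(M;U)}.\]
Then I would invoke \autoref{proposition: relative case} twice: once with the given $X$, and once with $T^*N$ in place of $X$ (legitimate since $\Sh_{T^*N}(N)=\Sh(N)$ is dualizable by the Lemma above, and $\Sh_{T^*N\times U}(N\times M;T^*N\times U)=\Sh(N\times M;T^*N\times U)$). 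This identifies, compatibly with the functor between them,
\[\Sh_X(N)\otimes\Sh(M;U)\simeq \Sh_{X\times U}(N\times M;T^*N\times U),\qquad \Sh(N)\otimes\Sh(M;U)\simeq \Sh(N\times M;T^*N\times U),\]
so that $\Sh(N;V)\otimes\Sh(M;U)\simeq \Sh(N\times M;T^*N\times U)\big/\Sh_{X\times U}(N\times M;T^*N\times U)$.

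To finish I would run the pair formalism of \autoref{section: pair} with ambient manifold $N\times M$, taking the conic closed sets $Z_0=T^*N\times Z\subset X_0=(X\times T^*M)\cup(T^*N\times Z)$. One checks $(Z_0)^c=T^*N\times U$, $X_0\cap(T^*N\times U)=X\times U$, and $(X_0)^c=V\times U$; hence the bottom-right corner of the $9$-diagram \eqref{equation: 9-diagram of quotients} gives $\Sh(N\times M;T^*N\times U)\big/\Sh_{X\times U}(N\times M;T^*N\times U)\simeq \Sh(N\times M;V\times U)$, and moreover (from the commutativity of that diagram) identifies the composite $\Sh(N\times M)\to\Sh(N\times M;T^*N\times U)\to\Sh(N\times M;V\times U)$ with the tautological quotient $j'$. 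Splicing the displays yields the asserted equivalence.

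For the ``in particular'' statement, I would keep track of the quotient functors through the chain above: it shows that under the equivalence $\Sh(N;V)\otimes\Sh(M;U)\simeq\Sh(N\times M;V\times U)$ the functor $j_V\otimes j_U$ corresponds to $j'$, hence $j_{V,!}\otimes j_{U,!}$ corresponds to $j'_!$. Therefore $L_V\otimes L_U=(j_{V,!}\otimes j_{U,!})(j_V\otimes j_U)=j'_!j'=L_{V\times U}$; since $\Phi_{K_V\boxtimes K_U}\simeq\Phi_{K_V}\otimes\Phi_{K_U}$ by the standard compatibility of $\boxtimes$ with the Lurie tensor product (after the identification $(N\times M)^2\simeq N^2\times M^2$), the uniqueness of kernels from \autoref{Kernel-functor corresponding} gives $K_V\boxtimes K_U\simeq K_{V\times U}$.

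The main difficulty I expect is organizational rather than conceptual: choosing the auxiliary closed set $X_0$ and verifying the elementary identities among $X\times U$, $T^*N\times U$, $V\times U$ so that \autoref{section: pair} applies verbatim, and---the more delicate point---confirming that all the equivalences produced by \autoref{proposition: relative case} and by the $9$-diagram are natural in the (co)localization functors, which is precisely what is needed to upgrade the categorical equivalence to the kernel identity $K_V\boxtimes K_U\simeq K_{V\times U}$. No new geometric input beyond the already-established K\"unneth formula for $\Sh_Z$ and the split-Verdier formalism enters.
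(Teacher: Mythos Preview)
Your proposal is correct and follows essentially the same route as the paper: both arguments combine the 9-diagram of \autoref{section: pair} (with the pair $T^*N\times Z\subset (X\times T^*M)\cup(T^*N\times Z)$) with two applications of \autoref{proposition: relative case} to identify the arrow $\Sh_{X\times U}(N\times M;T^*N\times U)\hookrightarrow\Sh(N\times M;T^*N\times U)$ with $[\Sh_X(N)\hookrightarrow\Sh(N)]\otimes\Sh(M;U)$; you merely run the two steps in the opposite order. Your additional paragraph tracking the quotient functors to deduce $K_V\boxtimes K_U\simeq K_{V\times U}$ is a welcome elaboration of what the paper leaves implicit.
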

\begin{proof}We fill the 9-diagram in \eeqref{equation: 9-diagram of quotients} by setting the manifold by $N\times M$, two conic closed sets are $T^*N\times Z \subset T^*N\times Z\cup X\times T^*M$. Therefore, we have the following equivalence
\[\Sh(N\times M;V\times U)\simeq \Sh(N\times M;T^*M\times U)/\Sh_{X\times U}(N\times M;T^*M\times U).\]
Now, we apply \autoref{proposition: relative case} twice to see that
\[[\Sh_{X\times U}(N\times M;T^*N\times U) \hookrightarrow \Sh(N\times M;T^*N\times U)] = [\Sh_X(N)\hookrightarrow \Sh(N)]\otimes \Sh(M;U).\]
Consequently, we have
\[\Sh(N\times M;T^*N\times U)/\Sh_{X\times U}(N\times M;T^*N\times U)\simeq  \Sh(N;V)\otimes \Sh(M;U).\qedhere\]
\end{proof}

\bibliographystyle{bingyu}
\clearpage
\phantomsection
\bibliography{rsump}

\noindent
\parbox[t]{28em}
{\scriptsize{
\noindent
Bingyu Zhang\\
Centre for Quantum Mathematics, University of Southern Denmark\\
Campusvej 55, 5230 Odense, Denmark\\
Email: {bingyuzhang@imada.sdu.dk}
}}

\end{document}